\documentclass[11pt]{amsart}
\usepackage[utf8]{inputenc}
\usepackage{amsfonts, amssymb, amsmath, amsthm, color, float,enumerate}
\usepackage{url}
\usepackage{bm}
\usepackage[unicode,psdextra]{hyperref}
\usepackage{pgfplots,tikz}
\pgfplotsset{compat=1.18}

\title[Weighted mixed endpoint  estimates]{Weighted mixed endpoint  estimates of Fefferman-Stein type for commutators of singular integral operators}
\author{}
\date{}

\usepackage[a4paper, left=2cm, right=2cm, top=3cm, bottom=3cm]{geometry} 


\theoremstyle{plain}
   \newtheorem{teo}{Theorem}
   
   \newtheorem{lema}[teo]{Lemma}

\theoremstyle{definition}
   
\theoremstyle{remark}
 \newtheorem{obs}{Remark}

\numberwithin{equation}{section}
\numberwithin{teo}{section}
\allowdisplaybreaks

\definecolor{aquamarine}{rgb}{0.5, 1.0, 0.83}
\definecolor{americanrose}{rgb}{1.0, 0.01, 0.24}
\definecolor{arsenic}{rgb}{0.23, 0.27, 0.29}
\definecolor{blizzardblue}{rgb}{0.67, 0.9, 0.93}
\definecolor{blush}{rgb}{0.87, 0.36, 0.51}
\definecolor{celestialblue}{rgb}{0.29, 0.59, 0.82}
\definecolor{chocolate(web)}{rgb}{0.82, 0.41, 0.12}
\definecolor{brightpink}{rgb}{1,0,0.5}
\definecolor{cadmiunred}{rgb}{0.89,0,0.13}

\hypersetup{
	colorlinks = true,
	linkcolor = cadmiunred,
	anchorcolor = blue,
	citecolor = brightpink,
	filecolor = blue,
	urlcolor = blue
}


\newcounter{BPR2}

%

\begin{document}

\author[F. Berra]{Fabio Berra}
\address{Fabio Berra, CONICET and Departamento de Matem\'{a}tica (FIQ-UNL), Santa Fe, Argentina.}
\email{fberra@santafe-conicet.gov.ar}
	
\author[G. Pradolini]{Gladis Pradolini}
\address{Gladis Pradolini, CONICET and Departamento de Matem\'{a}tica (FIQ-UNL), Santa Fe, Argentina.}
\email{gladis.pradolini@gmail.com}
	
\author[J. Recchi]{Jorgelina Recchi}
	\address{Jorgelina Recchi, Departamento de Matemática, Universidad Nacional del Sur (UNS), Instituto de Matematica (INMABB), Universidad Nacional del Sur-CONICET,  Bahía Blanca, Argentina.}
	\email{jrecchi@gmail.com}

 
\thanks{The author were supported by CONICET, UNL, ANPCyT and UNS}

\subjclass[2020]{42B20, 42B25}	

\keywords{Calderón-Zygmund operators, commutators, BMO spaces, Muckenhoupt weights}

\maketitle

\begin{abstract}	
We deal with mixed weak estimates of Fefferman-Stein type for higher order commutators of Calderón-Zygmund operators with BMO symbol. The results obtained are Fefferman-Stein inequalities that include the estimates proved in \cite{BCP22(JMS)} for the case of singular integral operators, as well as the classical weak endpoint estimate for commutators given in \cite{PP01}.

We also consider commutators of operators involving less regular kernels satisfying an $L^{\Phi}$--Hörmander condition. Particularly, the obtained results contain some previous  estimates proved in \cite{BCP22(JMS)} and \cite{Lorente-Martell-Perez-Riveros}.
\end{abstract}


\section{Introduction}

In 1985, Sawyer established an inequality on the real line for $A_1$ weights. The operator involved was a perturbation of the classical Hardy-Littlewood maximal function $M$ and a motivation to study these estimates was to give an alternative way to prove that $M$ is bounded on $L^p(w)$ where $1<p<\infty$ and $w\in A_p$. Concretely, Sawyer showed that if $u$ and $v$ are $A_1$ weights, then the estimate
\[uv\left(\left\{x\in \mathbb{R}: \frac{M(fv)(x)}{v(x)}>t\right\}\right)\leq \frac{C}{t}\int_{\mathbb{R}}|f|uv\]
holds for some absolute constant $C$ and every $t>0$. 

Although other authors have previously dealt with estimates of this type in the literature (see, for example, \cite{Andersen-Muckenhoupt} or \cite{M-W-77}), the work of Sawyer caught great attention on researchers and many extensions of the estimate above were subsequently established. In \cite{CruzUribe-Martell-Perez} mixed inequalities were obtained for both $M$ and Calderón-Zygmund operators (CZO) in higher dimensions. Aside the condition on the weights considered by Sawyer, the authors proved mixed inequality for related weights $u$ and $v$, in the sense that the product $uv$ belongs to $A_\infty$, allowing to apply classical theory arguments such as the Calderón-Zygmund decomposition with doubling measures.

Many years later, mixed estimates for commutators of Calderón-Zygmund operators were also established in \cite{Berra-Carena-Pradolini(M)}, and in \cite{Berra-Carena-Pradolini(J)} for the fractional setting. Moreover, in \cite{Berra}, \cite{Berra22Pot} and \cite{Berra-Carena-Pradolini(MN)}  the same authors showed  extensions to generalized maximal operators associated to 
 a given Young with certain properties. A more general result than those appearing in \cite{CruzUribe-Martell-Perez} was also established in \cite{L-O-P}.

On the other hand, a well-known weighted estimate for $M$ due to Fefferman and Stein in \cite{FS-71} establishes that
\[\int_{\mathbb{R}^n} |Mf(x)|^pw(x)\,dx\leq C\int_{\mathbb{R}^n} |f(x)|^p Mw(x)\,dx,\]
for every nonnegative locally integrable function $w$ and $1<p<\infty$. These type of estimates play an important role in Harmonic Analysis, particularly when duality arguments are required. Later on, in \cite{CF76} a similar estimate for CZO were established, namely
\[\int_{\mathbb{R}^n} |Tf(x)|^pw(x)\,dx\leq C_{p,r}\int_{\mathbb{R}^n} |f(x)|^p M\circ M_r(w)(x)\,dx,\]
where $1<p,r<\infty$. Some years later the estimate above was improved by Wilson in \cite{Wilson-89}, where the composition $M\circ M_r$ was replaced by the smaller operator $M^2\approx M_{L\text{log}L}$, for the case $1<p<2$.

In \cite{P94London}, Pérez established a generalization of Wilson's estimate for the entire range $1<p<\infty$. Further extensions for commutators of CZO were also established in \cite{Perez_Sharp97}. 
Endpoint weak inequalities of this type were also considered (see \cite{P94London} for CZO and \cite{PP01} for their commutators with BMO symbol).

Inspired by the work in \cite{P94London}, Berra, Carena and Pradolini (\cite{BCP22(JMS)}) proved mixed weighted estimates that generalize Fefferman-Stein type inequalities for CZO. They also consider  
 less regular operators involving a kernel with smoothness conditions given by means of certain Young functions. Although this article brought a first approach to the topic of mixed inequalities of Fefferman-Stein type, it was proved in \cite{Berra-Carena-Pradolini(M)} that the estimate
\[uw\left(\left\{x\in\mathbb{R}^n: \frac{M_{\Phi}(fv)(x)}{v(x)}>t\right\}\right)\leq C\int_{\mathbb{R}^n}\Phi\left(\frac{|f|v}{t}\right)Mu\]
holds, where $M_\Phi$ is a maximal operator related to an $L\text{log} L$ type function (see Section~\ref{seccion: preliminares} for the precise definition), and $w$ depends on $\Phi$ and $v$. 

In this article we are concerned in giving mixed weak estimates of Fefferman-Stein type for higher order commutators of CZO with BMO symbol.
The results obtained are not only interesting by themselves, but also extend the estimates proved in  \cite{BCP22(JMS)} and the weak endpoint inequalities given in \cite{P94London} and \cite{PP01}.

In order to state our main results we give some previous definitions.
Recall that a linear operator $T$ is a CZO if it is bounded on $L^2(\mathbb{R}^n)$ and, for $f\in L^2$ with compact support, we have the representation
	\begin{equation}\label{eq: representacion integral de T}Tf(x)=\int_{\mathbb{R}^n}K(x-y)f(y)\,dy ,\quad\quad x\notin \mathrm{supp}(f).\end{equation}

 The kernel  $K\colon\mathbb{R}^n\backslash\{0\}\to\mathbb{C}$ is a measurable function defined away from the origin that satisfies a size condition given by 
	\[|K(x)|\lesssim \frac{1}{|x|^n},\]
	and the following smoothness condition 
	\begin{equation}\label{eq:prop del nucleo}
	|K(x-y)-K(x-z)|\lesssim \frac{|x-z|}{|x-y|^{n+1}},\quad \textrm{ if } |x-y|>2|y-z|.
	\end{equation}
	The notation $A\lesssim B$ means, as usual, that there exists a positive constant $c$ such that $A\leq cB$. When $A\lesssim B$ and $B\lesssim A$ we shall write $A\approx B$.

Along this article we shall be dealing with the following two functions.  Given $m\in\mathbb{N}$ and $\varepsilon>0$, we define 
\[\Phi_m^\varepsilon(\lambda)=\lambda(1+\log^+\lambda)^{m+\varepsilon},\]
where, as usual, $\log^+\lambda=\max\{0,\log\lambda\}$. For $\varepsilon=0$ we write $\Phi_m^0=\Phi_m$. Additionally, for $p>q>2$, we define 
\[\Psi(\lambda)=\lambda^{p'+1-q'}\mathcal{X}_{[0,1)}(\lambda)+\lambda^{p'}\mathcal{X}_{[1,\infty)}(\lambda).\]
 
We are now in a position to state our first main result.
\begin{teo}\label{teo: mixta tipo F-S para T_b^m}
        Let $u$ be a nonnegative and locally integrable function. Let $q>2$ and $v\in \mathrm{RH}_\infty\cap A_q$. Let $m\in\mathbb{N}$, $b\in\mathrm{BMO}$ and $\varepsilon>0$. If $T$ is a CZO, then for every $p>\max\{q, 1+(m+1)/\varepsilon\}$ the inequality
		\[uv\left(\left\{x\in \mathbb{R}^n: \frac{|T_b^m(fv)(x)|}{v(x)}>t\right\}\right)\leq C\int_{\mathbb{R}^n}\Phi_m\left(\|b\|_{\mathrm{BMO}}^m\frac{|f(x)|}{t}\right)M_{\Phi_{m}^\varepsilon, v^{1-q'}}u(x)M(\Psi\circ v)(x)\,dx\]
		holds for every positive $t$.   
	\end{teo}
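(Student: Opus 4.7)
The plan is to combine a Calderón--Zygmund decomposition of $f$ with the algebraic expansion of higher-order commutators, treating a \emph{good} and a \emph{bad} piece separately, while exploiting the $\mathrm{RH}_\infty$ property of $v$ to replace $v$ by its cube averages. After the usual reductions (assume $t=1$, $\|b\|_{\mathrm{BMO}}=1$, and $f\ge 0$ of compact support), I would apply a Calderón--Zygmund decomposition of $f$ at height $1$ with respect to Lebesgue measure, obtaining pairwise disjoint cubes $\{Q_j\}$ with $1 < f_{Q_j} \le 2^n$ and $f \le 1$ a.e.\ on $\Omega^c$, where $\Omega = \bigcup_j Q_j$. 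Writing $f = g + h$ with $h = \sum_j h_j$, $h_j = (f - f_{Q_j})\mathcal{X}_{Q_j}$, the level set splits as
\[
uv\bigl(\{|T_b^m(fv)|/v > 1\}\bigr)
\le uv\bigl(\{|T_b^m(gv)|/v > 1/2\}\bigr) + uv\bigl(\{|T_b^m(hv)|/v > 1/2\}\bigr).
\]

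For the good part, I would invoke Chebyshev at exponent $p > q$ together with a weighted strong-type estimate for $T_b^m$ in $L^p(uv^{1-p})$; the hypothesis $v \in A_q$ with $p > q$ places $v^{1-p}$ in the correct Muckenhoupt class. Since $\|g\|_\infty \lesssim 1$, one controls $|gv|^p \lesssim |gv|\,v^{p-1}$ pointwise, and then splits $g = f\mathcal{X}_{\Omega^c} + \sum_j f_{Q_j}\mathcal{X}_{Q_j}$. On $\Omega^c$ the bound $f \le 1$ inserts $\Phi_m(f)$ for free; on each $Q_j$, one uses $f_{Q_j}|Q_j| \le \int_{Q_j} f \le \int_{Q_j} \Phi_m(f)$. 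The $\mathrm{RH}_\infty$ condition on $v$ allows to replace $v$ on each $Q_j$ by its average $v_{Q_j}$, and an elementary case analysis (separating the regions where $v \ge 1$ and $v < 1$) identifies the remaining $v$-factor with $M(\Psi\circ v)$ and the $u$-factor with $M_{\Phi_m^\varepsilon, v^{1-q'}}u$ through a generalized Hölder inequality in the Orlicz scale.

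For the bad part, with $\tilde Q_j = 3\sqrt{n}\, Q_j$ and $E^\ast = \bigcup_j \tilde Q_j$, the piece on $E^\ast$ is bounded by $\sum_j uv(\tilde Q_j)$; the CZD inequality $1 < f_{Q_j}$ allows to insert the missing $\Phi_m(f)$-factor, and the same $\mathrm{RH}_\infty$/averaging argument as above yields a bound of the desired form. On $(E^\ast)^c$, I would use the commutator expansion
\[
T_b^m(h_j v)(x) = \sum_{k=0}^m \binom{m}{k}(-1)^k (b(x) - b_{Q_j})^{m-k}\, T\bigl((b - b_{Q_j})^k h_j v\bigr)(x),
\]
exploit the mean-zero cancellation of $h_j$ in the $k=0$ term (to apply the smoothness of $K$ via \eqref{eq:prop del nucleo}), and for $k \ge 1$ use the size estimate on $K$ combined with a generalized Hölder inequality pairing the two BMO oscillation factors $(b-b_{Q_j})^{m-k}$ and $(b-b_{Q_j})^k$ (which live in $\exp L^{1/m}$ by John--Nirenberg) against the $L(\log L)^{m+\varepsilon}$-maximal of $u$ in the measure $v^{1-q'}\,dx$. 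Integrating the resulting pointwise bound against $uv$, applying Chebyshev with exponent one, and re-collecting the sum over $j$ via disjointness should give the announced inequality.

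The main obstacle will be the outside-bad-part estimate: tracking how the extra $\log^\varepsilon$ factor in $\Phi_m^\varepsilon$ arises from the generalized Hölder pairing of $m$ BMO oscillations with the $v^{1-q'}$-Orlicz average of $u$, and correctly isolating the remaining $v$-dependence as $M(\Psi\circ v)$ (this is where the piecewise definition of $\Psi$ at the threshold $v = 1$ is essential). The quantitative constraint $p > \max\{q,\, 1 + (m+1)/\varepsilon\}$ will surface as the intersection of the good-part requirement $p > q$ (for the Muckenhoupt class of $v^{1-p}$ and the weighted strong-type estimate) with the bad-part requirement needed to absorb the $m+\varepsilon$ logarithmic losses from the $m$-fold commutation when paired against the $\Phi_m^\varepsilon$-Orlicz maximal of $u$.
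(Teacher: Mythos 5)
Your skeleton (Calder\'on--Zygmund decomposition, good/bad splitting, binomial expansion of the commutator, Orlicz--H\"older with the BMO oscillations, $\mathrm{RH}_\infty$ to absorb $v$) is in the right spirit, but three steps would fail as written. First, you decompose $f$ with respect to Lebesgue measure, so $h_j=(f-f_{Q_j})\mathcal{X}_{Q_j}$ satisfies $\int_{Q_j}h_j\,dx=0$ but \emph{not} $\int_{Q_j}h_jv\,dx=0$; since the operator acts on $fv$, the cancellation you need in the $k=0$ term is $\int_{Q_j}(K(x-y)-K(x-x_{Q_j}))h_j(y)v(y)\,dy$, and that requires mean zero against $v\,dy$. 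The paper performs the decomposition at level $t$ with respect to the measure $v\,dx$ (doubling because $v\in A_\infty$), which is exactly what makes \eqref{eq: hj integra cero contra v} available. Second, for the terms $1\le k\le m$ of your binomial expansion you propose only the size estimate on $K$ plus H\"older and Chebyshev with exponent one; outside $Q_j^*$ this gives annular contributions of the order of averages of $|b-b_{Q_j}|^{m-k}u$ over $2^kQ_j$ with no decay in $k$, so the series over annuli diverges. There is no pointwise/absolute estimate here: the paper handles the pure-$T$ term $T\bigl(\sum_j(b-b_{Q_j})^mh_jv\bigr)$ by applying the already known mixed weak-type Fefferman--Stein inequality for $T$ (Theorem~\ref{teo: mixta tipo F-S para T}), and the intermediate terms by applying the \emph{inductive hypothesis} to the lower-order commutators $T_b^i$, $1\le i\le m-1$ (via Lemma~\ref{lema: T_b^m en terminos de conmutadores de orden menor}); the induction on $m$, absent from your plan, is what makes these terms tractable, and the local factor $\Phi_m(f/t)$ is then produced by the Orlicz--H\"older estimates on each $Q_j$ together with Lemmas~\ref{lema: comparacion peso por caracteristica} and~\ref{lema: M(Psi(v)) esencialmente constante sobre cubos}.

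Third, for the good part you invoke ``a weighted strong-type estimate for $T_b^m$ in $L^p(uv^{1-p})$'' justified by the Muckenhoupt class of $v^{1-p}$; but $u$ is an arbitrary nonnegative locally integrable function, so $uv^{1-p}$ belongs to no $A_r$ class and no Muckenhoupt-weighted bound applies. What is needed is the Fefferman--Stein strong $(p',p')$ estimate valid for \emph{arbitrary} weights (Theorem~\ref{teo: F-S tipo fuerte para conmutador de orden m}), applied with the conjugate exponent $p'$ and $w=u^*v^{1-p'}$, followed by the pointwise Lemma~\ref{lema: estimacion de maximal del producto de pesos}, which converts $M_{\Phi_m^\varepsilon}(u^*v^{1-p'})$ into $M_{\Phi_m^\varepsilon,v^{1-q'}}u\cdot v^{-p'}\Psi(v)$ (this is also where the exponents $p'$ and $p'+1-q'$ in $\Psi$ come from, and why the paper works at exponent $p'$ rather than $p$). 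In particular, the restriction $p>1+(m+1)/\varepsilon$ arises precisely at this good-part step, from requiring $(m+1)(1-p')+\varepsilon>0$ so that the logarithmic exponent in P\'erez's theorem at level $p'$ is dominated by $m+\varepsilon$ --- not, as you conjecture, from the bad part.
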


 \begin{obs}
 By technical reasons, we shall require that $\alpha=p'+1-q'$ be a positive number. When $q>2$ this is guaranteed since $q'<2$ and therefore $p'+1-q'>p'-1>0$. If $1<q<2$, this condition holds provided $p<1/(2-q)$. This would lead to a restriction for $\varepsilon$ given. Nevertheless, if $v\in A_q$ with $1<q<2$, then $v\in A_r$ for every $r>2$. This means that it will be enough to consider $q>2$ and remove the extra restriction for $p$.
 \end{obs}

 As we have noticed, the theorem above generalizes some previous results known in the literature. For example, $T_b^0=T$ and then we get the mixed estimate of Fefferman-Stein type for CZO obtained in \cite{BCP22(JMS)}. For $m\geq 1$ this estimate extends the endpoint inequality given in \cite{PP01} when $v=1$.

 \medskip

 We do not only consider operators as in \eqref{eq: representacion integral de T} with kernel satisfying condition \eqref{eq:prop del nucleo}, but also kernels with less regular condition. The motivation of dealing with them is that the classical Hörmander condition on $K$ fails to get Coifman type estimates for these operators (see \cite{Lorente-Riveros-delaTorre05} and \cite{Martell-Perez-Trujillo}).

We now introduce some notation that we shall be dealing with. Given a Young function $\varphi$, we denote
\[\|f\|_{\varphi,|x|\sim s}=\left\|f\mathcal{X}_{|x|\sim s}\right\|_{\varphi, B(0,2s)}\]
where $|x|\sim s$ means that $s<|x|\leq 2s$ and $\|\cdot\|_{\varphi,B(0,2s)}$ denotes the Luxemburg average over the ball $B(0,2s)$ (see the next sections for further details). 

We say that a kernel $K$ satisfies an $L^{\varphi}-$Hörmander condition, and we denote it by $K\in H_\varphi$, if there exist constants $c\geq 1$ and $C_\varphi>0$ such that the inequality
\begin{equation}\label{eq: condicion Hormander}
\sum_{k=1}^\infty (2^kR)^n\|K(\cdot-y)-K(\cdot)\|_{\varphi,|x|\sim 2^kR}\leq C_\varphi
\end{equation}
holds for every $y\in\mathbb{R}^n$ and $R>c|y|$. When $\varphi(t)=t^r$, $r\geq 1$, we write $H_\varphi=H_r$. 

We also say that a kernel $K\in H_{\varphi,m}$, $m\in\mathbb{N}$, if there exist two constants $c\geq 1$ and $C_{\varphi,m}>0$ such that
\begin{equation}\label{eq: condicion Hormander - m}
\sum_{k=1}^\infty (2^kR)^nk^m\|K(\cdot-y)-K(\cdot)\|_{\varphi,|x|\sim 2^kR}\leq C_{\varphi,m}
\end{equation}
holds for every $y\in\mathbb{R}^n$ and $R>c|y|$. Observe that $H_{\varphi,m}\subset H_{\varphi,\ell}$, for every $0\leq \ell\leq m$.

Both Fefferman-Stein and  Coifman estimates for operators with kernels of the type defined above were obtained in \cite{Lorente-Martell-Perez-Riveros} and \cite{LRdlT}.

 We say that a Young function $\varphi$ has an upper type $p$, $0<p<\infty$, if there exists a positive constant $C$ such that $\varphi(st)\leq Cs^p\varphi(t)$, for every $s\geq 1$ and $t\geq0$. 
  We also say that $\varphi$ has a lower type $q$ if there exists $C>0$ such that the inequality $\varphi(st)\leq Cs^q\varphi(t)$ holds for every $0\leq s\leq 1$ and $t\geq0$. 

We are now in a position to state our main result involving the kernels $K\in H_\varphi$. For our purposes, we will be assuming that both $\xi$ and $\tilde\xi$ are Young functions.

 \begin{teo}\label{teo: mixta tipo F-S para T_b^m Hormander}
 Let $m\in\mathbb{N}$ and $\xi,\zeta$ be Young functions such that $\tilde\xi$ has both an upper type $r$ and a lower type $s$, with $1<s<r<2$, and $\tilde\xi^{-1}(\lambda)\zeta^{-1}(\lambda)(\log \lambda)^m\lesssim \lambda$, for every $\lambda\geq \lambda_0\geq e$. 
 Let $T$ be defined as in \eqref{eq: representacion integral de T} with a kernel $K\in H_\zeta\cap H_{\xi,m}$. Assume that there exists $r<p<r'$ and Young functions $\eta$ and $\varphi$ such that $\eta\in B_{p'}$ and $\eta^{-1}(\lambda)\varphi^{-1}(\lambda)\lesssim \tilde\xi^{-1}(\lambda)$, for $\lambda\geq \lambda_0$. Given a nonnegative and locally integrable function $u$ and  $v\in \mathrm{RH}_\infty\cap A_q$ with $q=1+(p-1)/r$, if $b\in\mathrm{BMO}$, we have that
		\[uv\left(\left\{x\in \mathbb{R}^n: \frac{|T_b^m(fv)(x)|}{v(x)}>t\right\}\right)\leq C\int_{\mathbb{R}^n}\Phi_m\left(\|b\|_{\mathrm{BMO}}^m\frac{|f(x)|}{t}\right)M_{\varphi_p, v^{1-q'}}u(x)M(\Psi\circ v)(x)\,dx\]
		holds for every positive $t$, where $\varphi_p(\lambda)=\varphi(\lambda^{1/p})$. 
 \end{teo}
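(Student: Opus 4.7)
The plan is to mirror the proof of Theorem~\ref{teo: mixta tipo F-S para T_b^m}, but to replace the use of the pointwise smoothness \eqref{eq:prop del nucleo} by the two $L^\varphi$-Hörmander conditions \eqref{eq: condicion Hormander} and \eqref{eq: condicion Hormander - m}, together with the generalized Hölder inequality for Young functions. By homogeneity we may assume $\|b\|_{\mathrm{BMO}}=1$ and $t=1$, so that the goal reduces to bounding $uv(\{|T_b^m(fv)|>v\})$ by the displayed integral with $t=1$.

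The first step is to perform a Calderón--Zygmund decomposition of $|f|$ at level $1$ relative to a measure adapted to $v$. The hypothesis $v\in\mathrm{RH}_\infty\cap A_q$ with $q=1+(p-1)/r$ is essential here: $\mathrm{RH}_\infty$ lets us replace averages of $fv$ by averages of $f$ against $v^{1-q'}$, while $A_q\subset A_\infty$ provides the doubling required by the decomposition. This produces a disjoint family of dyadic cubes $\{Q_j\}$ and a splitting $f=g+\sum_j b_j$ with each $b_j$ supported in $Q_j$ and of zero mean. The level set is then split into $\bigcup 2Q_j$, whose $uv$-measure is controlled by the stopping inequality combined with $\mathrm{RH}_\infty$, and its complement, on which $T_b^m(gv)$ and $T_b^m(b_j v)$ are analyzed separately.

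For the good part, one uses Chebyshev's inequality with exponent $p$ together with a weighted $L^p$ bound for $T_b^m$ available for kernels in $H_\zeta$; the assumption $\eta\in B_{p'}$ and the Hölder-type relation $\eta^{-1}\varphi^{-1}\lesssim\tilde\xi^{-1}$ furnish the extrapolation that yields the factor $M_{\varphi_p,v^{1-q'}}u$, and the exponents $p'$ and $p'+1-q'$ defining $\Psi$ arise from estimating $M(\Psi\circ v)$ exactly as in the CZO case. For the bad part, expanding
\[T_b^m b_j(x)=\sum_{\ell=0}^{m}\binom{m}{\ell}(-1)^\ell(b(x)-b_{Q_j})^{m-\ell}T\bigl((b-b_{Q_j})^\ell b_j\bigr)(x)\]
and using $\int b_j=0$ to rewrite the inner operator as an integral against $K(x-y)-K(x-c_{Q_j})$, one splits into the dyadic annuli $|x-c_{Q_j}|\sim 2^k\ell(Q_j)$ and applies generalized Hölder with the Young triple $\xi$, $\zeta$ and $\exp L^{1/m}$ (the latter controlling $(b-b_{Q_j})^m$ by John--Nirenberg). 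The factor $k^m$ in \eqref{eq: condicion Hormander - m} absorbs the $k^m$ growth of the BMO factor on the $k$-th annulus, and the hypothesis $\tilde\xi^{-1}\zeta^{-1}(\log\lambda)^m\lesssim\lambda$ is precisely the Hölder identity that closes the estimate and accounts for the appearance of $\Phi_m(|f|)=|f|(1+\log^+|f|)^m$ on the right-hand side.

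The main obstacle I anticipate is this last chain of Orlicz estimates: one must simultaneously match the $H_{\xi,m}$ decay against the $(b-b_{Q_j})^m$ growth \emph{and} push the weight $v^{1-q'}$ inside the Orlicz averages so as to produce $M_{\varphi_p,v^{1-q'}}u$ rather than a bare $M_{\varphi_p}u$. The upper and lower type conditions $1<s<r<2$ on $\tilde\xi$ are what permit summing the geometric series in $k$ coming from the annular decomposition after the $\mathrm{RH}_\infty$ property of $v$ is used to absorb the exterior values of $v$ into $M(\Psi\circ v)$.
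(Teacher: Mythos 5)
Your overall scaffolding (Calderón--Zygmund decomposition with respect to $v\,dx$, splitting off $\Omega^*$, treating good and bad parts separately, and using the annuli plus generalized Hölder with \eqref{eq: condicion Hormander} and \eqref{eq: condicion Hormander - m}) matches the paper, but the treatment of the bad part has a genuine gap. You expand $T_b^m b_j(x)=\sum_{\ell=0}^{m}\binom{m}{\ell}(-1)^\ell(b(x)-b_{Q_j})^{m-\ell}T\bigl((b-b_{Q_j})^\ell b_j\bigr)(x)$ and then claim to use $\int b_j v=0$ to replace $K(x-y)$ by $K(x-y)-K(x-c_{Q_j})$ in every term. That cancellation is only available for $\ell=0$: for $1\le\ell\le m$ the inner function $(b-b_{Q_j})^\ell b_j v$ has no vanishing $v$-mean, so the annular/Hörmander argument gives no decay in $k$ and the series does not close; moreover these kernels carry no pointwise size bound, so you cannot estimate those terms crudely either. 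This is precisely why the paper does not use the binomial expansion: it uses Lemma~\ref{lema: T_b^m en terminos de conmutadores de orden menor} to write $T_b^m(hv)$ as $(b-b_{Q_j})^mT(h_jv)$ (the only zero-mean term, handled with $H_{\zeta}$, $H_{\xi,m}$ and the relation $\tilde\xi^{-1}\zeta^{-1}(\log\lambda)^m\lesssim\lambda$), plus $T((b-b_{Q_j})^mh_jv)$ (handled by the already-known $m=0$ mixed estimate, Theorem~\ref{teo: F-S para T Hormander}), plus intermediate terms $T_b^i((b-b_{Q_j})^{m-i}h_jv)$, $1\le i\le m-1$, which are controlled by \emph{induction on $m$}, i.e.\ by the mixed weak estimate already proved for lower order commutators. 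Your proposal has no inductive step and no substitute mechanism for these intermediate terms (nor for the $\ell=m$ term, which you also try to run through the zero-mean/annuli scheme), so the bad-part estimate fails as written.

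There is also an imprecision in the good part. What is actually needed there is the Coifman-type estimate of Theorem~\ref{teo: tipo Coifman para Tb^m hormander}, $\int|T_b^mf|^{p'}w\lesssim\|b\|_{\mathrm{BMO}}^{mp'}\int (M_{\tilde\xi}f)^{p'}w$ for $w\in A_\infty$ (valid because $K\in H_\zeta\cap H_{\xi,m}$ and $\tilde\xi^{-1}\zeta^{-1}(\log\lambda)^m\lesssim\lambda$, and applied after checking finiteness of the left-hand side), together with Tchebycheff at the \emph{dual} exponent $p'$ so that the weight produced is $u^*v^{1-p'}$; then the upper and lower types of $\tilde\xi$ let one pass $M_{\tilde\xi}\lesssim M_r$ and dualize $M_r$ on $L^{p'/r}$ (here $p<r'$, i.e.\ $p'>r$, is used), and finally Lemma~\ref{lema: estimacion de maximal del producto de pesos - 2} (which is where $\eta\in B_{p'}$, the relation $\eta^{-1}\varphi^{-1}\lesssim\tilde\xi^{-1}$ and $q=1+(p-1)/r$ enter) converts $M_{\tilde\xi}(u^*v^{1-p'})$ into $M_{\varphi_p,v^{1-q'}}u\cdot v^{-p'}\Psi(v)$. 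Your "weighted $L^p$ bound for $T_b^m$ for kernels in $H_\zeta$'' plus "extrapolation'' does not exist as such ($H_\zeta$ alone gives nothing for commutators), and Chebyshev at exponent $p$ produces the weight $uv^{1-p}$, which does not match the lemma. These points, together with the missing induction, are the substantive differences from a correct proof.
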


 This result can be seen as an extension of the mixed inequality  proved in \cite{BCP22(JMS)}. Furthermore, if we take $v=1$ it also extends a weak endpoint Fefferman-Stein estimate (see Theorem~3.8 in \cite{Lorente-Martell-Perez-Riveros}).

 \medskip

 We shall exhibit an example of Young functions satisfying the hypotheses of the theorem above. Fix $m\in\mathbb{N}$ and $1<r<2$. Let $s$ and $p$ such that $1<s<r<p<r'$. For $0<\varepsilon<\min\{r-s, p'-r\}$ and $0<\delta<m(r-\varepsilon)$ we take 
 \[\tilde\xi(\lambda)=\lambda^{r-\varepsilon}(1+\log^+\lambda)^{\delta} \quad \text{and} \quad \eta(\lambda)=\lambda^{p'-\tau},\]
 where $0<\tau<\min\{p'-1,p'-r-\varepsilon\}$. It is clear from this choice that $\tilde\xi$ has an upper type $r$ and a lower type $s$, and also that $\eta\in B_{p'}$.
 
 If $\beta\geq (1/(r-\varepsilon)-1/(p'-\tau))^{-1}$ and $\alpha\geq \beta\delta/(r-\varepsilon)$,
 we also define the functions
 \[\varphi(\lambda)=\lambda^\beta(1+\log^+\lambda)^\alpha \quad \text{ and } \quad \zeta(\lambda)=\lambda^\theta(1+\log^+\lambda)^\nu,\]
 where $\theta=(r-\varepsilon)'$ and $\displaystyle\nu=\theta\left(m-\frac{\delta}{r-\varepsilon}\right)$. Observe that $\tilde\xi$, $\eta$, $\varphi$ and $\zeta$ are Young functions from our choice of the parameters. Moreover, for $\lambda\geq e$ we have that
 \[\tilde\xi^{-1}(\lambda)\zeta^{-1}(\lambda)(\log\lambda)^m\approx \lambda^{1/(r-\varepsilon)}(\log\lambda)^{-\delta/(r-\varepsilon)}\lambda^{1/\theta}(\log\lambda)^{-\nu/\theta}(\log\lambda)^m=\lambda,\]
 and also
 \[\eta^{-1}(\lambda)\varphi^{-1}(\lambda)\approx \lambda^{1/(p'-\tau)}\lambda^{1/\beta}(\log\lambda)^{-\alpha/\beta}\lesssim \lambda^{1/(r-\varepsilon)}(\log\lambda)^{-\delta/(r-\varepsilon)}\approx \tilde\xi^{-1}(\lambda),\]
 as required. 
 
The remainder of the paper is organized as follows: in Section~\ref{seccion: preliminares} we give some definitions and previous results. In Section~\ref{seccion: auxiliares} we prove some technical and auxiliary results that we need for the proofs of Theorem~\ref{teo: mixta tipo F-S para T_b^m} and~\ref{teo: mixta tipo F-S para T_b^m Hormander}, contained in Section~\ref{seccion: F-S para OCZ} and~\ref{seccion: F-S para Hormander}, respectively.

\medskip
	
	\section{Preliminaries and definitions}\label{seccion: preliminares}

 By the notation $A\lesssim B$ we shall mean that there exists a positive constant $C$ such that $A\leq CB$. We say that $A\approx B$ if $A\lesssim B$ and $B\lesssim A$.
 
 We recall that a weight  $w$ is a locally integrable function that verifies $0<w(x)<\infty$ for almost every $x$. 

 Given $1<p<\infty$, we say that a weight $w$ belongs to the Muckenhoupt $A_p$ class if there exists a positive constant $C$ such that the inequality
 \[\left(\frac{1}{|Q|}\int_Q w\right)\left(\frac{1}{|Q|}\int_Q w^{1-p'}\right)^{p-1}\leq C\]
holds for every cube $Q$ with sides parallel to the coordinate axes and, when $p=1$, $w\in A_1$ if there exists a positive constant $C$ such that
 \[\frac{1}{|Q|}\int_Q w\leq Cw(x)\]
 for almost every $x\in Q$.

 The smallest constant $C$ that can be chosen in the respective inequalities above is denoted by $[w]_{A_p}$. 

 We say that $w\in A_\infty$ if it belongs to $A_p$ for some $1\leq p<\infty$, that is, $A_\infty=\bigcup_{p\geq 1} A_p$. For standard properties of $A_p$ weights see, for example, \cite{javi} and \cite{grafakos}. 
 
 Given $1<s<\infty$, we say that $w$ belongs to the reverse Hölder class $\text{RH}_s$ if there exists a positive constant $C$ such that
\[\left(\frac{1}{|Q|}\int_Q w^s\right)\leq \frac{C}{|Q|}\int_Q w\]
holds for every cube $Q$. It is well-known that if $w\in A_p$, then $w\in \text{RH}_s$ for some $1<s<\infty$.
By $w\in\text{RH}_\infty$ we understand that the following inequality 
\[\sup_Q w\leq \frac{C}{|Q|}\int_Q w\]
holds for some positive constant $C$ and every cube $Q$. The smallest constant for the $\text{RH}_s$ condition to hold is denoted by $[w]_{\text{RH}_s}$, $1<s\leq \infty$. It is easy to check that $\text{RH}_\infty\subset\text{RH}_s\subset \text{RH}_t$, whenever $1<t<s$.
 
The next lemma was proved in \cite{Cruz-Uribe-Neugebauer} and establishes some useful properties of $\rm{RH}_\infty$ classes that we shall use throughout the paper.

	\begin{lema}\label{lema: potencia negativa de RHinf en A1 y positivas en RHinf}
	Let $w$ be a weight.
	\begin{enumerate}[\rm (a)]
		\item \label{item a - lema: potencia negativa de RHinf en A1 y positivas en RHinf}If $p>1$ and $w\in\mathrm{RH}_\infty\cap A_p$, then $w^{1-p'}\in A_1$;
		\item \label{item b - lema: potencia negativa de RHinf en A1 y positivas en RHinf}if $w\in \mathrm{RH}_\infty$, then $w^r\in \mathrm{RH}_\infty$ for every $r>0$;
		\item \label{item c - lema: potencia negativa de RHinf en A1 y positivas en RHinf}if $w\in A_1$, then $w^{-1}\in \rm{RH}_\infty$.
	\end{enumerate}
\end{lema}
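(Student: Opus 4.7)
The plan is to handle the three parts separately, each by combining the pointwise formulation of the $\mathrm{RH}_\infty$ (respectively $A_1$) condition with an elementary manipulation.

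For part (a), I would start from the $A_p$ condition for $w$ raised to the power $1/(p-1)=p'-1$, which yields
\[
\left(\frac{1}{|Q|}\int_Q w\right)^{p'-1}\cdot \frac{1}{|Q|}\int_Q w^{1-p'}\leq [w]_{A_p}^{p'-1}.
\]
The $\mathrm{RH}_\infty$ hypothesis gives $w(x)\leq \operatorname*{ess\,sup}_Q w \leq [w]_{\mathrm{RH}_\infty}\,\frac{1}{|Q|}\int_Q w$ for a.e.\ $x\in Q$, so substituting into the preceding inequality produces
\[
w(x)^{p'-1}\cdot \frac{1}{|Q|}\int_Q w^{1-p'} \leq \bigl([w]_{A_p}[w]_{\mathrm{RH}_\infty}\bigr)^{p'-1}
\]
for a.e.\ $x\in Q$. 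Since $1-p'<0$, rearranging this is precisely the $A_1$ condition for $w^{1-p'}$.

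For part (b), I set $M_Q=\operatorname*{ess\,sup}_Q w$ and $\bar w_Q=\frac{1}{|Q|}\int_Q w$, so the hypothesis reads $M_Q\leq [w]_{\mathrm{RH}_\infty}\bar w_Q$. If $r\geq 1$, Jensen's inequality for the convex function $t\mapsto t^r$ gives $\frac{1}{|Q|}\int_Q w^r\geq \bar w_Q^r$, while trivially $\operatorname*{ess\,sup}_Q w^r=M_Q^r\leq [w]_{\mathrm{RH}_\infty}^r\bar w_Q^r$, and the two together prove $w^r\in\mathrm{RH}_\infty$. For $0<r<1$ the naive application of Jensen points the wrong way, so I would instead exploit the pointwise bound $w(x)\leq M_Q$ to write
\[
\bar w_Q=\frac{1}{|Q|}\int_Q w^r\cdot w^{1-r}\leq M_Q^{1-r}\cdot \frac{1}{|Q|}\int_Q w^r,
\]
from which $\frac{1}{|Q|}\int_Q w^r\geq M_Q^{r-1}\bar w_Q$ and hence
\[
\frac{\operatorname*{ess\,sup}_Q w^r}{\frac{1}{|Q|}\int_Q w^r}\leq \frac{M_Q^r}{M_Q^{r-1}\bar w_Q}=\frac{M_Q}{\bar w_Q}\leq [w]_{\mathrm{RH}_\infty}.
\]

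For part (c), the $A_1$ condition reads $\bar w_Q\leq [w]_{A_1}w(x)$ a.e.\ on $Q$, equivalently $\operatorname*{ess\,sup}_Q w^{-1}\leq [w]_{A_1}\bar w_Q^{-1}$. Jensen's inequality applied to the convex function $t\mapsto 1/t$ gives $\frac{1}{|Q|}\int_Q w^{-1}\geq \bar w_Q^{-1}$, so combining yields $\operatorname*{ess\,sup}_Q w^{-1}\leq [w]_{A_1}\,\frac{1}{|Q|}\int_Q w^{-1}$, which is exactly $w^{-1}\in\mathrm{RH}_\infty$. The only genuine subtlety I foresee is the case $0<r<1$ in part (b), where Jensen's inequality cannot be used directly and one must instead exploit the pointwise $\mathrm{RH}_\infty$ bound as above.
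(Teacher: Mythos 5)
Your proof is correct in all three parts: in (a) the $A_p$ inequality raised to the power $p'-1$ combined with the pointwise bound $w(x)\leq [w]_{\mathrm{RH}_\infty}\frac{1}{|Q|}\int_Q w$ gives exactly the $A_1$ condition for $w^{1-p'}$ with constant $([w]_{A_p}[w]_{\mathrm{RH}_\infty})^{p'-1}$; in (b) the case split is handled properly, Jensen covering $r\geq 1$ and the factorization $w=w^r w^{1-r}$ with $w^{1-r}\leq (\sup_Q w)^{1-r}$ covering $0<r<1$; and in (c) the $A_1$ bound on the essential supremum of $w^{-1}$ together with Jensen for $t\mapsto 1/t$ yields the $\mathrm{RH}_\infty$ condition. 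Note, however, that the paper does not prove this lemma at all: it simply quotes it from the work of Cruz-Uribe and Neugebauer on the structure of reverse H\"older classes, so there is no argument in the text to compare yours against. What your approach buys is a short, self-contained and purely elementary verification, with explicit constants, relying only on the definitions of $A_p$, $A_1$ and $\mathrm{RH}_\infty$ as stated in the preliminaries; the cited reference obtains these facts as part of a more systematic study of $\mathrm{RH}_\infty$ (e.g.\ via characterizations of that class), which is more than is needed here.
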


Let $\varphi\colon [0,\infty)\to[0,\infty)$ be a Young function, that is,  a strictly increasing and convex function that verifies $\varphi(0)=0$ and $\lim_{t\to\infty} \varphi(t)=\infty$. 

The complementary function $\tilde\varphi$ of such $\varphi$ is given by
\[\tilde\varphi(t)=\sup\{ts-\varphi(s): s\geq 0\}.\]
If $\varphi$ and $\tilde\varphi$ are Young functions, the relation
\begin{equation}\label{eq: producto de inversas como t}
\varphi^{-1}(t)\tilde\varphi^{-1}(t)\approx t
\end{equation} 
holds for every $t$ (see, for example, \cite{KR} or \cite{raoren}).

Given a Young function $\varphi$ and a weight $w$, the generalized maximal function $M_{\varphi,w}$ is defined, for $f$ such that $\varphi(f)\in L^1_{\text{loc}}(w)$, by
\[M_{\varphi, w}f(x)=\sup_{Q\ni x}\|f\|_{\varphi, Q, w},\]
where $\|f\|_{\varphi, Q, w}$ denotes the weighted Luxemburg averages of $f$ over $Q$, given by
\[\|f\|_{\varphi,Q,w}=\inf\left\{\lambda>0: \frac{1}{w(Q)}\int_Q\varphi\left(\frac{|f|}{\lambda}\right)w\leq 1\right\}.\]
In fact, the infimum above is actually a minimum, since it can be seen that
\[\frac{1}{w(Q)}\int_Q\varphi\left(\frac{|f|}{\|f\|_{\varphi,Q,w}}\right)w\leq 1.\]

When $w=1$ we just write $M_{\varphi,w}=M_\varphi$. 
If we further take, for $r\geq 1$, $\varphi(t)=t^r$, then
\[M_\varphi f=M_r f=M(f^r)^{1/r},\]
where $M$ is the classical Hardy-Littlewood maximal operator.

There is a useful relation between weighted Luxemburg averages in terms of modular expressions. A proof for the case $w=1$ can be found in \cite{KR}, although the same proof also works for  doubling measures $\mu$, particularly for $d\mu(x)=w(x)\,dx$ where $w$ is a Muckenhoupt weight. Concretely, if $\varphi$ is a Young function and $w$ is a Muckenhoupt weight, we have that
\begin{equation}\label{eq: equivalencia norma Luxemburgo con infimo}
\|f\|_{\varphi,Q,w}\approx \inf_{\tau>0}\left\{\tau+\frac{\tau}{w(Q)}\int_Q \varphi\left(\frac{|f|}{\tau}\right)w\right\},
\end{equation}
 for every cube $Q$.

 If $\varphi$, $\psi$ and $\eta$ are Young functions that verify
 \[\eta^{-1}(t)\psi^{-1}(t)\lesssim \varphi^{-1}(t)\]
 for every $t\geq t_0>0$, we can conclude that there exists $K_0>0$ such that
 \begin{equation}\label{eq: consecuencia relacion de inversas}
 \varphi(st)\lesssim \eta(s)+\psi(t)
 \end{equation}
 for $s,t\geq K_0$. As a consequence, the generalized version of Hölder inequality for Luxemburg averages
 \begin{equation}\label{eq: Holder generalizada con promedios Luxemburgo}
 \|fg\|_{\varphi, Q, w}\lesssim \|f\|_{\eta, Q, w}\|g\|_{\psi, Q, w}
 \end{equation}
holds for every doubling weight $w$ and every cube $Q$. Particularly, when we take $w=1$, by means of \eqref{eq: producto de inversas como t} we have that
\begin{equation}\label{eq: desigualdad de Holder generalizada}
\frac{1}{|Q|}\int_Q |fg|\lesssim \|f\|_{\varphi, Q}\,\|g\|_{\tilde\varphi, Q}.
\end{equation}

Given $1<p<\infty$, we say that $\varphi\colon[0,\infty)\to [0,\infty)$ belongs to $B_p$ if there exists a constant $c>0$ such that
\[\int_c^\infty \frac{\varphi(t)}{t^p}\frac{\,dt}{t}<\infty.\]
These classes were introduced in \cite{Perez-95-Onsuf} and are related with the boundedness of the operator $M_\varphi$ in $L^p$. It is not difficult to see that if $\varphi\in B_p$, then $\varphi(t)\lesssim t^p$.
 
The following lemmas will be useful in the sequel. A proof  can be found in \cite{BCP22(JMS)}.

\begin{lema}\label{lema: comparacion peso por caracteristica}
	Let $\varphi$ be a Young function, $w$ a doubling weight, $f$ a function such that $M_{\varphi,w} f (x)<\infty$ a. e. and $Q$ be a fixed cube. Then
	\[M_{\varphi,w}(f\mathcal{X}_{\mathbb{R}^n\backslash RQ})(x)\approx M_{\varphi,w}(f\mathcal{X}_{\mathbb{R}^n\backslash RQ})(y)\]
	for every $x,y\in Q$, where $R=4\sqrt{n}$. 
\end{lema}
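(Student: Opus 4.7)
The plan is to use doubling of $w$ together with a geometric comparison between cubes containing $x$ and cubes containing $y$. Since the conclusion is symmetric in $x$ and $y$, it suffices to establish $M_{\varphi,w}(f\mathcal{X}_{\mathbb{R}^n\setminus RQ})(x)\lesssim M_{\varphi,w}(f\mathcal{X}_{\mathbb{R}^n\setminus RQ})(y)$, with constants depending only on $n$ and on the doubling constant of $w$. Only cubes $Q'\ni x$ that meet $\mathbb{R}^n\setminus RQ$ can contribute to the supremum at $x$, since otherwise $f\mathcal{X}_{\mathbb{R}^n\setminus RQ}$ vanishes on $Q'$ and its Luxemburg average is zero.

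For such a $Q'$, denote by $\ell$ and $\ell'$ the side lengths of $Q$ and $Q'$, and by $c(Q')$ the center of $Q'$. The concentricity $Q\subset RQ$ yields $\mathrm{dist}_\infty(x,\mathbb{R}^n\setminus RQ)\geq (R-1)\ell/2$; since $Q'$ has $\ell^\infty$-diameter $\ell'$ and reaches a point outside $RQ$, this forces $\ell'\geq (R-1)\ell/2$. With $R=4\sqrt{n}$ I obtain $\ell'\geq \ell$, so the dilate $Q'':=3Q'$ captures $y$:
\[
\|y-c(Q')\|_\infty \leq \|y-x\|_\infty+\|x-c(Q')\|_\infty \leq \ell+\ell'/2\leq 3\ell'/2.
\]

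To compare the Luxemburg averages on $Q'$ and $Q''$, I would invoke the equivalent modular formulation~\eqref{eq: equivalencia norma Luxemburgo con infimo}: writing $g=f\mathcal{X}_{\mathbb{R}^n\setminus RQ}$, using $Q'\subset Q''$ to enlarge the domain of integration, and then the doubling bound $w(Q'')\leq D\, w(Q')$,
\begin{align*}
\|g\|_{\varphi,Q',w} &\approx \inf_{\tau>0}\Bigl\{\tau+\frac{\tau}{w(Q')}\int_{Q'}\varphi(|g|/\tau)\,w\Bigr\} \\
&\leq \inf_{\tau>0}\Bigl\{\tau+D\,\frac{\tau}{w(Q'')}\int_{Q''}\varphi(|g|/\tau)\,w\Bigr\} \\
&\lesssim \|g\|_{\varphi,Q'',w}.
\end{align*}
Since $Q''\ni y$, the right-hand side is bounded by $M_{\varphi,w}(g)(y)$, and taking the supremum over admissible $Q'\ni x$ yields the desired direction; the reverse inequality follows by swapping $x$ and $y$. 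The delicate point, and the reason I expect this to be the only real obstacle, is that Young functions are not positively homogeneous, so a direct comparison via the defining constraint $\tfrac{1}{w(Q)}\int\varphi(|g|/\lambda)\,w\leq 1$ does not scale cleanly under the doubling of $w$; this is precisely what~\eqref{eq: equivalencia norma Luxemburgo con infimo} fixes, since its linear dependence on $\tau$ absorbs the doubling constant without any convexity gymnastics.
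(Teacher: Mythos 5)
Your argument is correct, and it is the standard route for this lemma (which the paper itself does not prove but defers to \cite{BCP22(JMS)}): any cube $Q'\ni x$ meeting $\mathbb{R}^n\setminus RQ$ has $\ell(Q')\geq (R-1)\ell(Q)/2\geq \ell(Q)$, so $3Q'$ contains $y$, and doubling of $w$ transfers the Luxemburg average from $Q'$ to $3Q'$. One minor remark: the detour through \eqref{eq: equivalencia norma Luxemburgo con infimo} is not actually needed, since convexity with $\varphi(0)=0$ gives $\varphi(t/D)\leq \varphi(t)/D$ for $D\geq 1$, so the defining constraint of the Luxemburg norm already scales under doubling and yields $\|g\|_{\varphi,Q',w}\leq D\,\|g\|_{\varphi,3Q',w}$ directly.
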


\begin{lema}\label{lema: relacion maximal generalizada con peso A1}
	Let $w\in A_1$ and $\varphi$ be a Young function.
	\begin{enumerate}[\rm (a)]
		\item \label{item a - lema: relacion maximal generalizada con peso A1}  There exists a positive constant $C$ such that
  \[\|f\|_{\varphi,Q} \leq C\|f\|_{\varphi,Q,w}\]
  for every cube $Q$. As a consequence, $M_{\varphi}f(x)\leq C M_{\varphi,w}f(x)$,
		for every function $f$ such that $M_{\varphi,w}f<\infty$ a.e.;
		\item \label{item b - lema: relacion maximal generalizada con peso A1} if $w^r\in A_1$ for some $r>1$, then
		\[M_{\varphi,w}f(x)\leq C M_{\varphi,w^r}f(x),\]
		for every function $f$ such that $M_{\varphi,w^r}f<\infty$ a.e.
		\end{enumerate}
\end{lema}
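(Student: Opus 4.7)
The plan for part (a) is to exploit the pointwise lower bound provided by the $A_1$ condition. Since $w \in A_1$, one has $w(x) \geq c\, w(Q)/|Q|$ for a.e.\ $x \in Q$, hence $1/w(x) \leq C\,|Q|/w(Q)$ a.e.\ on $Q$. Setting $\lambda = \|f\|_{\varphi,Q,w}$, the fact that the infimum in the Luxemburg definition is attained gives $\int_Q \varphi(|f|/\lambda)\, w \leq w(Q)$, and combining these two inequalities yields
\[
\frac{1}{|Q|}\int_Q \varphi\!\left(\frac{|f|}{\lambda}\right)dx \;\leq\; \frac{C}{w(Q)}\int_Q \varphi\!\left(\frac{|f|}{\lambda}\right) w \;\leq\; C.
\]
Convexity of $\varphi$ with $\varphi(0)=0$ yields $\varphi(t/C) \leq \varphi(t)/C$ whenever $C \geq 1$, so replacing $\lambda$ by $C\lambda$ brings the average down to $1$ and produces $\|f\|_{\varphi,Q} \leq C\|f\|_{\varphi,Q,w}$. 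Taking the supremum over cubes $Q\ni x$ then delivers the consequence $M_\varphi f(x) \leq C\, M_{\varphi,w} f(x)$.

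For part (b), the plan is first to establish, for every nonnegative $g$, the integral comparison
\[
\frac{1}{w(Q)}\int_Q g\, w \;\leq\; C\,\frac{1}{w^r(Q)}\int_Q g\, w^r,
\]
and then to apply it with $g = \varphi(|f|/\lambda)$ where $\lambda = \|f\|_{\varphi,Q,w^r}$ and rescale exactly as in part~(a). To prove the integral comparison I will use the pointwise consequence of $w^r \in A_1$, namely $w(x) \geq c\,(w^r(Q)/|Q|)^{1/r}$ a.e.\ on $Q$. Raising this to the power $r-1$ and substituting into the algebraic identity $w = w^r/w^{r-1}$ produces the reverse bound $w(x) \leq C\,(|Q|/w^r(Q))^{1/r'}\, w^r(x)$ a.e.\ on $Q$. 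On the other hand, integrating the pointwise lower bound for $w$ gives $w(Q) \geq c\,|Q|^{1/r'}\, w^r(Q)^{1/r}$. When these two estimates are multiplied together in the obvious way, the factors $|Q|^{1/r'}$ cancel and $w^r(Q)^{1/r}\cdot w^r(Q)^{1/r'} = w^r(Q)$, which is precisely what is needed to reduce the right-hand side to $\frac{1}{w^r(Q)}\int_Q g\, w^r$. Supremising over $Q \ni x$ then transfers the Luxemburg-norm inequality to the claimed pointwise bound for the maximal operators.

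There is no genuine obstacle: the argument is a careful bookkeeping exercise with the $A_1$ condition and the scaling property of Young functions. The only non-routine ingredient is the algebraic trick $w = w^r/w^{r-1}$ in part~(b), which converts an $A_1$ \emph{lower} bound for $w^r$ into an \emph{upper} bound for $w$ in terms of $w^r$ with exactly the power of $w^r(Q)/|Q|$ needed to match the corresponding lower bound for $w(Q)$ and cancel cleanly.
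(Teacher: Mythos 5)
Your proof is correct: part (a) is the standard use of the pointwise $A_1$ lower bound $w\geq c\,w(Q)/|Q|$ together with the attainment of the Luxemburg infimum and the convexity rescaling $\varphi(t/C)\leq\varphi(t)/C$, and in part (b) the identity $w=w^r/w^{r-1}$ combined with the $A_1$ bound for $w^r$ does yield $\frac{1}{w(Q)}\int_Q g\,w\lesssim\frac{1}{w^r(Q)}\int_Q g\,w^r$, with the exponents $1/r$ and $1/r'$ cancelling exactly as you claim. The paper itself does not prove this lemma but cites \cite{BCP22(JMS)}; your argument is the natural one based on the pointwise $A_1$ estimates and is in the same spirit as the cited proof, so there is nothing to correct.
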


The next lemma establishes a well-known bound for functions of $L\log L$ type that we shall require in our main estimates. A proof can be found, for example, in \cite{Berra}.

\begin{lema}\label{lema: acotacion LlogL por potencia}
Let $\delta>0$ and $\varphi(t)=t(1+\log^+t)^\delta$. For every $\varepsilon>0$ there exists a positive constant $C=C(\varepsilon,\delta)$ such that
\[\varphi(t)\leq Ct^{1+\varepsilon}, \quad\textrm{ for }\quad t\geq 1.\]
Moreover, the constant $C$ can be taken as $C=\max\left\{1, (\delta/\varepsilon)^\delta\right\}.$	
\end{lema}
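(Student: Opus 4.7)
The plan is to reduce the inequality $\varphi(t)\leq Ct^{1+\varepsilon}$ on $[1,\infty)$ to a one-variable optimization problem. Since $\log^+ t=\log t$ for $t\geq 1$, the claim is equivalent to showing that
\[g(t):=\frac{(1+\log t)^\delta}{t^\varepsilon}\]
is bounded above on $[1,\infty)$ by $\max\{1,(\delta/\varepsilon)^\delta\}$. So the entire proof is standard calculus on the real line.

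First I would compute $g'(t)$, obtaining
\[g'(t)=\frac{(1+\log t)^{\delta-1}}{t^{\varepsilon+1}}\bigl[\delta-\varepsilon(1+\log t)\bigr],\]
which vanishes on $(1,\infty)$ precisely at the unique point $t^\ast=e^{\delta/\varepsilon-1}$ determined by $1+\log t^\ast=\delta/\varepsilon$. The sign analysis of the bracket shows that $g$ is increasing on $(1,t^\ast)$ and decreasing on $(t^\ast,\infty)$ whenever $t^\ast\geq 1$.

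Next I split into two cases. If $\delta\geq\varepsilon$, then $t^\ast\geq 1$ lies in the interval of interest and the supremum of $g$ is
\[g(t^\ast)=\frac{(\delta/\varepsilon)^\delta}{e^{\delta-\varepsilon}}\leq (\delta/\varepsilon)^\delta,\]
since $e^{\delta-\varepsilon}\geq 1$. If $\delta<\varepsilon$, the critical point $t^\ast<1$ is outside $[1,\infty)$ and $g$ is decreasing on $[1,\infty)$, so the supremum is attained at $t=1$, giving $g(1)=1$. Combining both cases yields $g(t)\leq \max\{1,(\delta/\varepsilon)^\delta\}$, which is exactly the claimed constant.

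There is essentially no obstacle here; the argument is a direct computation. The only mild subtlety is noting that the maximum value $g(t^\ast)=(\delta/\varepsilon)^\delta e^{-(\delta-\varepsilon)}$ is in fact strictly smaller than $(\delta/\varepsilon)^\delta$ when $\delta>\varepsilon$, so the stated constant is not sharp but is clean enough to be convenient in applications; this slackness is absorbed into the $\max$ to handle the complementary regime $\delta<\varepsilon$ uniformly.
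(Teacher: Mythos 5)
Your argument is correct and complete: the reduction to bounding $g(t)=(1+\log t)^\delta t^{-\varepsilon}$ on $[1,\infty)$, the computation of $g'$, the location of the critical point $t^\ast=e^{\delta/\varepsilon-1}$, and the two cases $\delta\geq\varepsilon$ (maximum $g(t^\ast)=(\delta/\varepsilon)^\delta e^{-(\delta-\varepsilon)}\leq(\delta/\varepsilon)^\delta$) and $\delta<\varepsilon$ (where $g$ is decreasing on $[1,\infty)$ and the supremum is $g(1)=1$) together give exactly the stated constant $\max\{1,(\delta/\varepsilon)^\delta\}$. The paper itself does not prove this lemma but only cites \cite{Berra} for it, so there is no in-text proof to compare with; your self-contained elementary calculus verification is the standard route and fully justifies the statement, including the explicit form of $C(\varepsilon,\delta)$.
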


Given a linear operator $T$ and a locally integrable function $b$, the first order commutator operator of $T$ is formally defined by
\[T_bf=[b,T]f=bTf-T(bf).\]
We can also consider higher order commutators proceeding recursively. For $m\in \mathbb{N}$, we define
\[T_b^mf=[b,T_b^{m-1}]f,\]
where we understand $T_b^0=T$.

If the operator $T$ has an integral representation as \eqref{eq: representacion integral de T}, it is easy to check that the corresponding higher order commutator verifies
\begin{equation}
T_b^mf(x)=\int_{\mathbb{R}^n}(b(x)-b(y))^mK(x-y)f(y)\,dy
\end{equation}
for every $x\not\in \text{supp}(f)$.

The function $b$ is usually known as the symbol of the commutator $T_b^m$ and in this article we shall deal with symbols 
belonging to the bounded mean oscillation space \text{BMO}, given by the locally integrable functions $b$ such that
\[\|b\|_{\text{BMO}}=\sup_Q \frac{1}{|Q|}\int_Q|b-b_Q|<\infty,\]
where $b_Q$ stands for the usual average of $b$ over the cube $Q$. The quantity $\|b\|_{\text{BMO}}$ does not correspond to a norm, but it does if we restrict it to the quotient of the $\text{BMO}$ space modulo the  constant functions.

The following classical results are well-know estimates for BMO functions (see, for example, \cite{Perez95}). 

\begin{lema}\label{lema: diferencia de promedios en cubos dilatados acotados por norma BMO}
Let $b\in \mathrm{BMO}$. There exists a positive constant $C$ such that for every $k\in\mathbb{N}$ and every cube $Q$ the inequality
\[|b_Q-b_{2^kQ}|\leq C k\|b\|_{\mathrm{BMO}}\]
holds, where $2^kQ$ is the cube with the same center as $Q$ and side length $2^k$ times the side length of $Q$.
\end{lema}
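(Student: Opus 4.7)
The plan is to control $|b_Q - b_{2^kQ}|$ by a telescoping argument through the chain of dyadic dilates $Q \subset 2Q \subset \cdots \subset 2^kQ$. The first step is simply to apply the triangle inequality to write
\[|b_Q - b_{2^kQ}| \leq \sum_{j=0}^{k-1} |b_{2^jQ} - b_{2^{j+1}Q}|,\]
so that the task reduces to bounding each consecutive difference by a constant multiple of $\|b\|_{\mathrm{BMO}}$, independently of $j$, $k$ and $Q$.

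For a single term, the idea is to recognize $b_{2^jQ} - b_{2^{j+1}Q}$ as the average over the smaller cube $2^jQ$ of the deviation $b - b_{2^{j+1}Q}$, and then to enlarge the domain of integration to $2^{j+1}Q$ at the cost of the volume factor $|2^{j+1}Q|/|2^jQ| = 2^n$. Concretely, I would estimate
\[|b_{2^jQ} - b_{2^{j+1}Q}| = \left|\frac{1}{|2^jQ|}\int_{2^jQ}\bigl(b(y) - b_{2^{j+1}Q}\bigr)\,dy\right| \leq \frac{2^n}{|2^{j+1}Q|}\int_{2^{j+1}Q}|b(y) - b_{2^{j+1}Q}|\,dy \leq 2^n \|b\|_{\mathrm{BMO}},\]
where the last step is the definition of the BMO seminorm applied to the cube $2^{j+1}Q$.

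Summing over $j$ from $0$ to $k-1$ then yields $|b_Q - b_{2^kQ}| \leq 2^n k \|b\|_{\mathrm{BMO}}$, proving the claim with $C = 2^n$. There is no real obstacle in this argument; the only subtle point worth emphasizing is the necessity of enlarging the integration domain before invoking the BMO bound, since the oscillation inequality requires averaging $|b - b_P|$ against $P$ itself rather than against a strictly smaller sub-cube, and this is precisely where the geometric factor $2^n$ (and the fact that $C$ depends only on the dimension) enters.
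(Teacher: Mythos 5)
Your proof is correct and is precisely the standard telescoping argument behind this classical estimate, which the paper itself does not reprove but simply cites from the literature (\cite{Perez95}). Each consecutive difference is handled exactly as one should, with the domain-enlargement factor $|2^{j+1}Q|/|2^jQ|=2^n$ giving the dimensional constant $C=2^n$, so nothing is missing.
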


\begin{lema}\label{lema: norma exponencial de la oscilacion media acotada por la norma BMO}
Let $b\in \mathrm{BMO}$, $\delta>0$ and $\psi(t)=e^{t^{1/\delta}}-1$. Then there exists a positive constant $C$ such that the inequality
\[\|b-b_Q\|_{\psi, Q}\leq C \|b\|_{\mathrm{BMO}}\]
holds for every cube $Q$.
\end{lema}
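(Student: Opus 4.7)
The argument would mirror the proof of Theorem~\ref{teo: mixta tipo F-S para T_b^m}, substituting at each step where pointwise regularity of $K$ was used the integral bounds afforded by $K\in H_\zeta\cap H_{\xi,m}$. After normalising $\|b\|_{\mathrm{BMO}}=t=1$ and taking $f\ge 0$ bounded with compact support, set $\sigma=v^{1-q'}$; by Lemma~\ref{lema: potencia negativa de RHinf en A1 y positivas en RHinf}(\ref{item a - lema: potencia negativa de RHinf en A1 y positivas en RHinf}) we have $\sigma\in A_1$, and by Lemma~\ref{lema: potencia negativa de RHinf en A1 y positivas en RHinf}(\ref{item b - lema: potencia negativa de RHinf en A1 y positivas en RHinf}) also $v^{q'}=v/\sigma\in\mathrm{RH}_\infty$. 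I would perform a Calder\'on--Zygmund decomposition of $f$ at an appropriate level relative to the doubling measure $\sigma\,dx$, obtaining disjoint cubes $\{Q_j\}$ with dilates $\tilde Q_j$ of essentially disjoint overlap, and writing $f=g+h$ with $g\lesssim 1$ a.e. and $h=\sum_j h_j$, $\mathrm{supp}\,h_j\subset Q_j$, $\int h_j\sigma=0$. The $uv$-measure of $\tilde\Omega=\bigcup_j\tilde Q_j$ is absorbed into the right-hand side via the $A_1$-property of $\sigma$ and the $\mathrm{RH}_\infty$-property of $v$ in exactly the same way as in \cite{BCP22(JMS)}, so the task reduces to controlling both pieces on $\mathbb{R}^n\setminus\tilde\Omega$.

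\textbf{Good part.} For $T_b^m(gv)$ I would apply Chebyshev at exponent $p\in(r,r')$ and invoke the weighted Coifman-type inequality for commutators with $H_\xi$ kernels proved in \cite{Lorente-Martell-Perez-Riveros}, which dominates $\int|T_b^m(gv)|^p w$ by integrals of $M_\eta$- and $M$-type quantities. The pointwise bound $g\lesssim 1$ together with the $\mathrm{RH}_\infty$-property of $v$ converts this into the target product $M_{\varphi_p,\sigma}u\cdot M(\Psi\circ v)$; the key exponent match is forced by $q=1+(p-1)/r$, which makes $\sigma$-averages of $u$ weighted by $\varphi_p=\varphi(\cdot^{1/p})$ emerge in place of $L^p$-averages against $v$, while the $\mathrm{RH}_\infty$-conversion produces the two-piece form of $\Psi$ according to whether $v(x)<1$ or $v(x)\ge 1$.

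\textbf{Bad part.} For each $j$, on $\mathbb{R}^n\setminus\tilde Q_j$ I would expand
\[T_b^m(h_jv)(x)=\sum_{k=0}^m\binom{m}{k}(-1)^{m-k}(b(x)-b_{Q_j})^k\,T\bigl((b-b_{Q_j})^{m-k}h_jv\bigr)(x),\]
and transfer the cancellation from $\int h_j\sigma=0$ to the required cancellation for $h_jv$ by factoring $v=v^{q'}\sigma$ and using that $v^{q'}\in\mathrm{RH}_\infty$ to replace $v^{q'}(y)$ on $Q_j$ by $\mathrm{ess\,sup}_{Q_j}v^{q'}$ up to a controlled error. This reduces each inner integral to $\int[K(x-y)-K(x-x_j)](b(y)-b_{Q_j})^{m-k}h_j(y)\sigma(y)\,dy$. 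Splitting $\mathbb{R}^n\setminus\tilde Q_j$ into dyadic rings $|x-x_j|\sim 2^\ell r_j$ and applying the generalised H\"older inequality~\eqref{eq: desigualdad de Holder generalizada} with the three-function factorisation $\tilde\xi^{-1}(\lambda)\zeta^{-1}(\lambda)(\log\lambda)^m\lesssim\lambda$, together with Lemma~\ref{lema: norma exponencial de la oscilacion media acotada por la norma BMO}, turns the ring sums into expressions of the form $\sum_\ell (2^\ell r_j)^n\ell^{m-k}\|K(\cdot-y)-K(\cdot)\|_{\xi,|x|\sim 2^\ell r_j}\lesssim C_{\xi,m}$; the $k=m$ piece has no BMO factor inside $T$ and needs only $K\in H_\zeta$. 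Multiplying by the outer BMO factor $(b(x)-b_{Q_j})^k$ and integrating against $uv$ produces, via Lemma~\ref{lema: norma exponencial de la oscilacion media acotada por la norma BMO} and Lemma~\ref{lema: acotacion LlogL por potencia}, the factor $\Phi_m(|f|)$ on the right-hand side.

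\textbf{Main obstacle.} A final generalised H\"older step with the pair $(\eta,\varphi)$ supplied by $\eta^{-1}\varphi^{-1}\lesssim\tilde\xi^{-1}$ splits the remaining weight integrand into the announced product $M_{\varphi_p,\sigma}u\cdot M(\Psi\circ v)$. The principal difficulty is the exponent calibration: the triple-H\"older factorisation $(\zeta,\tilde\xi,(\log)^m)$ used for the BMO-loaded kernel sum, the pair $(\eta,\varphi)$ used for the outer splitting, the precise choice $q=1+(p-1)/r$, and the $\mathrm{RH}_\infty$-conversion between $v$- and $\sigma$-averages must all align so that the final right-hand side exhibits $\varphi_p$ (not $\varphi$), $\sigma$ (not $v$), and the two-piece form of $\Psi$ dictated by the break at $\lambda=1$. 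Verifying this alignment, and keeping the $\ell^{m-k}$ factors in each annular sum controlled by $C_{\xi,m}$ while letting $k=m$ be handled by $H_\zeta$ alone, is the bulk of the technical work.
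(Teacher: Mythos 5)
Your proposal does not address the statement it is supposed to prove. The statement is Lemma~\ref{lema: norma exponencial de la oscilacion media acotada por la norma BMO}, the classical exponential-integrability estimate $\|b-b_Q\|_{\psi,Q}\leq C\|b\|_{\mathrm{BMO}}$ with $\psi(t)=e^{t^{1/\delta}}-1$, which in the paper is a standard John--Nirenberg type fact quoted from the literature (see \cite{Perez95}) and used as an ingredient elsewhere. What you have written instead is a proof plan for Theorem~\ref{teo: mixta tipo F-S para T_b^m Hormander}: a Calder\'on--Zygmund decomposition of $f$ with respect to a weighted measure, a good/bad splitting, a Coifman-type estimate for the good part, and annular estimates with the $H_\zeta\cap H_{\xi,m}$ kernel conditions for the bad part. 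None of this has any bearing on the lemma, which concerns a single cube, a single BMO function, and no operator at all. Worse, your sketch explicitly invokes Lemma~\ref{lema: norma exponencial de la oscilacion media acotada por la norma BMO} in the treatment of the bad part, so if this were offered as a proof of that lemma it would be circular.

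What is actually needed is short: by the John--Nirenberg inequality there are dimensional constants $c_1,c_2>0$ with $|\{x\in Q:|b(x)-b_Q|>\lambda\}|\leq c_1e^{-c_2\lambda/\|b\|_{\mathrm{BMO}}}|Q|$ for every cube $Q$ and $\lambda>0$; a layer-cake computation then shows that for $\lambda_0=A\|b\|_{\mathrm{BMO}}$ with $A$ large (depending only on $n$ and $\delta\geq 1$) one has
\begin{equation*}
\frac{1}{|Q|}\int_Q\left(e^{\left(\frac{|b-b_Q|}{\lambda_0}\right)^{1/\delta}}-1\right)\leq 1,
\end{equation*}
which is precisely $\|b-b_Q\|_{\psi,Q}\leq A\|b\|_{\mathrm{BMO}}$ by the definition of the Luxemburg norm. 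That two-line argument (or a citation, as the paper chooses) is the expected content here; the machinery you describe belongs to the proof of the main theorem, not to this lemma.
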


	\section{Auxiliary results}\label{seccion: auxiliares}
We devote this section to state and prove some results that will be useful for our purposes. The first two lemmas contain estimates previously proved in \cite{BCP22(JMS)}. We include both proofs for the sake of completeness. Recall that
$\Phi_m^\varepsilon(\lambda)=\lambda(1+\log^+\lambda)^{m+\varepsilon}$ and $\Psi(\lambda)=\lambda^{p'+1-q'}\mathcal{X}_{[0,1)}(\lambda)+\lambda^{p'}\mathcal{X}_{[1,\infty)}(\lambda)$.

\begin{lema}\label{lema: estimacion de maximal del producto de pesos}
Let $m\in\mathbb{N}$, $\varepsilon>0$ and $1<q<p$. Let $u$ be a nonnegative and locally integrable function and $v\in {\rm RH}_\infty\cap A_q$. Then there exists a positive constant $C$ such that the inequality
\[M_{\Phi_m^\varepsilon}\left(uv^{1-p'}\right)(x)\leq CM_{\Phi_m^\varepsilon,v^{1-q'}}u(x)\,v^{-p'}(x)(\Psi\circ v)(x)\]
holds for almost every $x$.
\end{lema}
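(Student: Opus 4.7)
I would reduce the pointwise bound to the cube-wise estimate
\[
\|uv^{1-p'}\|_{\Phi_m^\varepsilon,Q}\leq C\,\|u\|_{\Phi_m^\varepsilon,Q,v^{1-q'}}\,v^{-p'}(x)(\Psi\circ v)(x)
\]
for every cube $Q\ni x$; taking the supremum on the left then delivers the statement of the lemma. Write $N:=\|u\|_{\Phi_m^\varepsilon,Q,v^{1-q'}}$ and $V:=v^{-p'}(x)(\Psi\circ v)(x)$; a direct inspection of the two cases in the definition of $\Psi$ shows that $V\geq v(x)^{1-p'}$, a fact used repeatedly below. The strategy is to choose $\tau=ANV$ for a large universal constant $A$ and verify $\tfrac{1}{|Q|}\int_Q\Phi_m^\varepsilon(uv^{1-p'}/\tau)\,dy\leq 1$, which forces $\|uv^{1-p'}\|_{\Phi_m^\varepsilon,Q}\leq\tau$.

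The central decomposition comes from writing $uv^{1-p'}/\tau=(u/N)(Nv^{1-p'}/\tau)$ and applying $\log^+(ab)\leq\log^+a+\log^+b$ together with the convexity bound $(A+B)^{m+\varepsilon}\leq C(A^{m+\varepsilon}+B^{m+\varepsilon})$:
\[
\Phi_m^\varepsilon\!\left(\frac{uv^{1-p'}}{\tau}\right)\leq C\!\left[\frac{Nv^{1-p'}}{\tau}\,\Phi_m^\varepsilon\!\left(\frac{u}{N}\right)+\frac{uv^{1-p'}}{\tau}\!\left(1+\log^+\frac{Nv^{1-p'}}{\tau}\right)^{\!m+\varepsilon}\right].
\]
Integrating the first summand and factoring $v^{1-p'}=v^{q'-p'}v^{1-q'}$, the $\mathrm{RH}_\infty$ bound $v\leq Cv_Q$ on $Q$ (which tames the positive power $v^{q'-p'}$, as $q'>p'$) and the minimality property of $N$ reduce the task to controlling $v_Q^{q'-p'}v^{1-q'}_Q$. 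A short computation combining the $A_q$ condition on $v$ with $v^{1-q'}\in A_1$ (Lemma~\ref{lema: potencia negativa de RHinf en A1 y positivas en RHinf}(a)) and the arithmetic identity $1+(1-q)(q'-p')=(q-1)/(p-1)$ gives $v_Q^{q'-p'}v^{1-q'}_Q\leq Cv(x)^{1-p'}$ a.e., so this summand contributes at most $C/A$ to the total.

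For the second summand I would invoke Lemma~\ref{lema: acotacion LlogL por potencia} in the equivalent form $(1+\log^+t)^{m+\varepsilon}\leq 1+C_\delta t^\delta$, valid for all $t\geq 0$ and any fixed small $\delta>0$. This splits the term into pieces involving $\int_Q uv^{1-p'}$ and $\int_Q uv^{(1-p')(1+\delta)}$. The first piece is dominated by $CNv(x)^{1-p'}$ by combining the generalized Hölder inequality for weighted Luxemburg averages (yielding $\int_Q uv^{1-q'}\leq CNv^{1-q'}(Q)$) with the same $\mathrm{RH}_\infty$ and $A_q$ manipulation as above. The second piece, once $\delta$ is chosen small enough that $q'-p'-(p'-1)\delta>0$, is treated analogously but produces an extra factor $v_Q^{-(p'-1)\delta}$; here the crucial additional input is the ``lower'' $\mathrm{RH}_\infty$ consequence $v_Q\gtrsim v(x)$ a.e., immediate from $v(x)\leq\sup_Q v\leq Cv_Q$, which upgrades that factor to $v(x)^{-(p'-1)\delta}$ and closes the bound at $CNv(x)^{(1-p')(1+\delta)}$. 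Using $V\geq v(x)^{1-p'}$, the two pieces contribute at most $C/A$ and $C/A^{1+\delta}$ respectively, and choosing $A$ large enough forces the total $\leq 1$.

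The main obstacle is precisely the perturbed moment $\int_Q uv^{(1-p')(1+\delta)}$ appearing in the second summand: because $v^{1-p'}$ is unbounded on $Q$ whenever $v$ vanishes, one must simultaneously deploy the upper $\mathrm{RH}_\infty$ bound to dominate the positive power $v^{q'-p'-(p'-1)\delta}$ and its complementary lower form $v_Q\gtrsim v(x)$ to absorb the negative perturbation $v^{-(p'-1)\delta}$, all while respecting the smallness constraint $\delta<(q'-p')/(p'-1)$ that keeps the above exponent positive.
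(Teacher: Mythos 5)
Your argument is correct, and it follows the same basic strategy as the paper's proof -- reduce to a cube-wise Luxemburg estimate with $\lambda$ (your $N$) equal to $\|u\|_{\Phi_m^\varepsilon,Q,v^{1-q'}}$, exploit the $L\log L$ structure of $\Phi_m^\varepsilon$ together with Lemma~\ref{lema: acotacion LlogL por potencia} to trade the logarithm for a small power of $v^{1-p'}$, and then transfer averages over $Q$ to the value at $x$ via the weight hypotheses -- but the execution differs in two ways. The paper splits $Q$ into $\{v^{1-p'}\le e\}$ and its complement, uses submultiplicativity of $\Phi_m^\varepsilon$, and chooses the \emph{exact} exponent $\tau=(q'-1)/(p'-1)-1$ so that $(1-p')(1+\tau)=1-q'$; after that, only $v^{1-q'}\in A_1$ is needed, and the factor $v^{-p'}\Psi(v)$ appears through the identity $\max\{1,v^{1-q'}(x)\}=v^{-p'}(x)(\Psi\circ v)(x)$. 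You instead avoid the domain splitting by a global log-splitting decomposition normalized at $\tau=ANV$, take $\delta$ strictly below the critical value $(q'-p')/(p'-1)$, and compensate with extra weight manipulations: the $A_q$ comparison $\left(\frac{1}{|Q|}\int_Q v^{1-q'}\right)\lesssim v_Q^{1-q'}$ (equivalently your exponent identity combined with $v^{1-q'}\in A_1$) and the $\mathrm{RH}_\infty$ bound $v(x)\le \sup_Q v\lesssim v_Q$ a.e., which you use both to tame $v^{q'-p'-(p'-1)\delta}$ on $Q$ and to pass from $v_Q^{1-p'}$ to $v(x)^{1-p'}$. The payoff of your bookkeeping is that every piece is controlled by powers of $Nv(x)^{1-p'}/\tau$, so you in fact obtain the slightly stronger pointwise bound with $v(x)^{1-p'}$ in place of $v^{-p'}(x)(\Psi\circ v)(x)$ (the stated factor then follows from $v^{1-p'}\le v^{-p'}\Psi(v)$), at the cost of a somewhat longer computation than the paper's exact-exponent route.
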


\begin{proof}
 The hypothesis imply that $v\in \mathrm{RH}_\infty\cap A_p$ since $p>q$, so by item~\eqref{item a - lema: potencia negativa de RHinf en A1 y positivas en RHinf} of Lemma~\ref{lema: potencia negativa de RHinf en A1 y positivas en RHinf} we get $v^{1-p'}\in A_1$. 
	
	Fix $x$ and $Q$ a cube containing $x$. We pick $\lambda=\|u\|_{\Phi_m^\varepsilon, Q, v^{1-q'}}$, and write
	\begin{align*}
	\frac{1}{|Q|}\int_Q \Phi_m^\varepsilon\left(\frac{uv^{1-p'}}{\lambda}\right)&=\frac{1}{|Q|}\int_{Q\cap\{v^{1-p'}\leq e\}} \Phi_m^\varepsilon\left(\frac{uv^{1-p'}}{\lambda}\right)+\frac{1}{|Q|}\int_{Q\cap\{v^{1-p'}>e\}} \Phi_m^\varepsilon\left(\frac{uv^{1-p'}}{\lambda}\right)\\
	&=I_1+I_2.
	\end{align*}
	By using that $\Phi_m^\varepsilon$ is submultiplicative and Lemma~\ref{lema: relacion maximal generalizada con peso A1},  for $I_1$ we get
	\[I_1\lesssim \frac{1}{|Q|}\int_Q\Phi_m^\varepsilon\left(\frac{u}{\|u\|_{\Phi_m^\varepsilon, Q , v^{1-q'}}}\right)\lesssim \frac{1}{|Q|}\int_Q\Phi_m^\varepsilon\left(\frac{u}{\|u\|_{\Phi_m^\varepsilon, Q}}\right)\leq 1.\]
	 To deal with $I_2$, let $\tau=(q'-1)/(p'-1)-1>0$, since $p'<q'$. By applying Lemma~\ref{lema: acotacion LlogL por potencia}  we get that  
	 \begin{align*}
	 	I_2&\lesssim \frac{1}{|Q|}\int_Q \Phi_m^\varepsilon\left(\frac{u(y)}{\lambda}\right)v^{(1-p')(1+\tau)}\,dy\\
	&= \frac{1}{|Q|}\int_Q \Phi_m^\varepsilon\left(\frac{u(y)}{\lambda}\right)v^{1-q'}(y)\,dy\\
	&=\frac{v^{1-q'}(Q)}{|Q|}\left(\frac{1}{v^{1-q'}(Q)}\int_Q\Phi_m^\varepsilon\left(\frac{u(y)}{\|u\|_{\Phi_m^\varepsilon,Q,v^{1-q'}}}\right)v^{1-q'}(y)\,dy\right)\\
	&\lesssim \left[v^{1-q'}\right]_{A_1}\max\left\{1,v^{1-q'}(x)\right\},
	 \end{align*}
	 since $v^{1-q'}\in A_1$. Consequently, we arrive to
	 \begin{align*}
	 \|uv^{1-p'}\|_{\Phi_m^\varepsilon,Q}&\lesssim \max\left\{1,v^{1-q'}(x)\right\}\|u\|_{\Phi_m^\varepsilon, Q, v^{1-q'}}\\
      &\leq  \max\left\{1,v^{1-q'}(x)\right\} M_{\Phi_m^\varepsilon,v^{1-q'}}u(x).
	 \end{align*}
  By noticing that
  \[\max\{1,v^{1-q'}(x)\}= v^{-p'}(x)(\Psi\circ v)(x)\]
  and taking supremum over the cubes $Q$ that contain $x$ we achieve the desired inequality. 
\end{proof}

\begin{lema}\label{lema: estimacion de maximal del producto de pesos - 2}
Let $m\in\mathbb{N}$ and $q>1$. Let $\xi$, $\eta$ and $\varphi$ be Young functions such that $\eta^{-1}(t)\varphi^{-1}(t)\lesssim \tilde\xi^{-1}(t)$, for $t\geq t_0\geq e$. Assume that $\xi$ has an upper type $r>1$ and $\eta\in B_{p'}$, where $p=1+r(q-1)$. Let $u$ be a nonnegative and locally integrable function and $v\in {\rm RH}_\infty\cap A_q$. Then there exists a positive constant $C$ such that the inequality
\[M_{\tilde\xi}\left(uv^{1-p'}\right)(x)\leq CM_{\varphi_p,v^{1-q'}}u(x)\,v^{-p'}(x)(\Psi\circ v)(x)\]
holds for almost every $x$, where $\varphi_p(t)=\varphi(t^{1/p})$.
\end{lema}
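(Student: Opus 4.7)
The plan is to follow the strategy of the proof of Lemma~\ref{lema: estimacion de maximal del producto de pesos}: the submultiplicativity of $\Phi_m^\varepsilon$ used there is replaced by the modular factorization \eqref{eq: consecuencia relacion de inversas} coming from $\eta^{-1}\varphi^{-1}\lesssim\tilde\xi^{-1}$, while the role of Lemma~\ref{lema: acotacion LlogL por potencia} is played jointly by $\eta\in B_{p'}$ and the upper type $r$ of $\xi$. I would first note that $p=1+r(q-1)>q$ (since $r>1$), so $v\in\mathrm{RH}_\infty\cap A_p$ and, by Lemma~\ref{lema: potencia negativa de RHinf en A1 y positivas en RHinf}\eqref{item a - lema: potencia negativa de RHinf en A1 y positivas en RHinf}, both $v^{1-p'}$ and $v^{1-q'}$ belong to $A_1$. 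A direct computation gives $v^{-p'}(\Psi\circ v)=\max\{1,v^{1-q'}\}$, and the arithmetic identity $r(1-p')=1-q'$ translates into the pointwise relation $(v^{1-p'})^{r}=v^{1-q'}$, which will play the role of the $\tau$-trick used in the previous lemma.

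Fix $x$ and a cube $Q\ni x$; taking supremum over $Q$ it suffices to prove
\[\|uv^{1-p'}\|_{\tilde\xi,Q}\lesssim \max\{1,v^{1-q'}(x)\}\,\|u\|_{\varphi_p,Q,v^{1-q'}}.\]
Setting $\lambda=\|u\|_{\varphi_p,Q,v^{1-q'}}$, I would decompose $\tfrac{1}{|Q|}\int_Q\tilde\xi(uv^{1-p'}/\lambda)=I_1+I_2$ according to $\{v^{1-p'}\le e\}$ and $\{v^{1-p'}>e\}$. On the first set the factor $v^{1-p'}$ is bounded, so after a harmless enlargement of $u/\lambda$ the relation \eqref{eq: consecuencia relacion de inversas} yields $I_1\lesssim 1+\tfrac{1}{|Q|}\int_Q\eta(u/\lambda)$. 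On the second, applying \eqref{eq: consecuencia relacion de inversas} pointwise with $s=u/\lambda$ and $t=v^{1-p'}\ge e$ gives
\[I_2\lesssim 1+\frac{1}{|Q|}\int_Q\eta(u/\lambda)+\frac{1}{|Q|}\int_{Q\cap\{v^{1-p'}>e\}}\varphi(v^{1-p'}).\]

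To estimate the $\eta$-integral I would use $\eta\in B_{p'}$ (which supplies $\eta(t)\lesssim t^{p'}$ for large $t$) together with the substitution $\mu=\lambda^{1/p}$, which identifies $\|u\|_{\varphi_p,Q,v^{1-q'}}$ with $\|u^{1/p}\|_{\varphi,Q,v^{1-q'}}^{p}$, and Lemma~\ref{lema: relacion maximal generalizada con peso A1}\eqref{item a - lema: relacion maximal generalizada con peso A1} (transfer between unweighted and $v^{1-q'}$-weighted averages, permitted since $v^{1-q'}\in A_1$) to conclude $\tfrac{1}{|Q|}\int_Q\eta(u/\lambda)\lesssim 1$. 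For the $\varphi$-integral I would combine the identity $(v^{1-p'})^{r}=v^{1-q'}$ with the upper type $r$ of $\xi$ (which, through the factorization $\eta^{-1}\varphi^{-1}\lesssim\tilde\xi^{-1}$, governs the averaged growth of $\varphi$) to dominate $\varphi(v^{1-p'})$ by a constant multiple of $v^{1-q'}$ in an averaged sense, and then invoke the $A_1$ property of $v^{1-q'}$ to pass from $\tfrac{1}{|Q|}\int_Q v^{1-q'}$ to the pointwise value $[v^{1-q'}]_{A_1}\,v^{1-q'}(x)$. Adding the estimates, $I_1+I_2\lesssim\max\{1,v^{1-q'}(x)\}$, and a rescaling of $\lambda$ using convexity of $\tilde\xi$ produces the desired Luxemburg-norm bound.

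The main obstacle lies precisely in these two transfer steps, which must finely balance the three pieces of hypothesis---the factorization $\eta^{-1}\varphi^{-1}\lesssim\tilde\xi^{-1}$, the $B_{p'}$ condition on $\eta$, and the upper type $r$ of $\xi$---to arrive at an $A_1$-compatible bound against $v^{1-q'}$. In particular, the example following Theorem~\ref{teo: mixta tipo F-S para T_b^m Hormander} shows that $\varphi$ need not satisfy a pointwise power bound $\varphi(s)\lesssim s^r$, so the second transfer cannot be performed pointwise and must be carried out through a Hölder-type inequality for Luxemburg averages against the measure $v^{1-q'}$.
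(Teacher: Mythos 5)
Your overall frame (split the modular over $\{v^{1-p'}\le 1\}$ and $\{v^{1-p'}>1\}$, use $r(1-p')=1-q'$, finish with the $A_1$ property of $v^{1-q'}$ and $\max\{1,v^{1-q'}\}=v^{-p'}\Psi(v)$) matches the paper, but the central step is applied to the wrong pair of factors, and this creates a genuine gap. You invoke \eqref{eq: consecuencia relacion de inversas} with $s=u/\lambda$ and $t=v^{1-p'}$, which produces the terms $\frac{1}{|Q|}\int_Q\eta(u/\lambda)$ and $\frac{1}{|Q|}\int_{Q\cap\{v^{1-p'}>e\}}\varphi(v^{1-p'})$. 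Neither is controllable from the hypotheses. For the second: nothing forces $\varphi(t)\lesssim t^{r}$, and in the example after Theorem~\ref{teo: mixta tipo F-S para T_b^m Hormander} one has $\varphi(t)\approx t^{\beta}(1+\log^+t)^{\alpha}$ with $\beta$ arbitrarily large compared with $r$; on $\{v^{1-p'}>e\}$ (where $v<1$) this gives $\varphi(v^{1-p'})\approx v^{\beta(1-p')}\gg v^{r(1-p')}=v^{1-q'}$, and for $v(x)=|x|^a\in\mathrm{RH}_\infty\cap A_q$ with $a$ close to $q-1$ the integral $\int_Q v^{\beta(1-p')}$ can even be infinite, so no ``averaged'' or Hölder-type rescue against $v^{1-q'}$ can repair this term --- you acknowledge the obstruction in your last paragraph but do not resolve it, and it is not merely technical. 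For the first: $\eta\in B_{p'}$ only gives $\eta(u/\lambda)\lesssim(u/\lambda)^{p'}$, and the available information (the $\varphi_p$-modular of $u/\lambda$ with respect to $v^{1-q'}\,dx$ is at most $1$, plus Lemma~\ref{lema: relacion maximal generalizada con peso A1}) bounds $\frac{1}{|Q|}\int_Q\varphi_p(u/\lambda)$, not the $L^{p'}$ average; the hypotheses do not give $t^{p'}\lesssim\varphi_p(t)$, so your substitution remark does not close this either.

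The paper's proof avoids both problems by never letting \eqref{eq: consecuencia relacion de inversas} see the weight. On $\{v^{1-p'}>1\}$ it first uses the upper type $r$ of $\tilde\xi$ (note: of $\tilde\xi$, as in Theorem~\ref{teo: mixta tipo F-S para T_b^m Hormander}, not of $\xi$) pointwise with $s=v^{1-p'}\ge 1$ to get $\tilde\xi(uv^{1-p'}/\lambda)\lesssim v^{1-q'}\,\tilde\xi(u/\lambda)$, so the weight becomes exactly the measure defining $\lambda=\|u\|_{\varphi_p,Q,v^{1-q'}}$; only then does it apply \eqref{eq: consecuencia relacion de inversas} to the split $u/\lambda=(u/\lambda)^{1/p}(u/\lambda)^{1/p'}$, so that $\varphi$ acts on $(u/\lambda)^{1/p}$ (giving $\varphi_p(u/\lambda)$) and $\eta$ acts on $(u/\lambda)^{1/p'}$ (giving, via $\eta(t)\lesssim t^{p'}$, just $u/\lambda\lesssim\varphi_p(u/\lambda)$); both have $v^{1-q'}$-averages at most $1$ by the choice of $\lambda$, yielding $I_2\lesssim[v^{1-q'}]_{A_1}v^{1-q'}(x)$. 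For $I_1$ the paper derives $\tilde\xi(t)\lesssim\varphi_p(t)$ directly from $\eta^{-1}\varphi^{-1}\lesssim\tilde\xi^{-1}$, $\eta^{-1}(t)\gtrsim t^{1/p'}$ and $\tilde\xi^{-1}(t)\lesssim t$, and then uses $\lambda\gtrsim\|u\|_{\varphi_p,Q}$; no use of \eqref{eq: consecuencia relacion de inversas} with the weight is needed there either. To fix your argument you should adopt this order of operations: type condition on $\tilde\xi$ first to trade $v^{1-p'}$ for the measure $v^{1-q'}$, factorization applied to the two powers of $u/\lambda$ afterwards.
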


\begin{proof}
 Since $p>q$, by item~\eqref{item a - lema: potencia negativa de RHinf en A1 y positivas en RHinf} of Lemma~\ref{lema: potencia negativa de RHinf en A1 y positivas en RHinf}, we have again $v^{1-p'}\in A_1$. Fix $x$ and a cube $Q$ containing $x$. We define $\lambda=\|u\|_{\varphi_p, Q, v^{1-q'}}$, and we split as before 
	\[\frac{1}{|Q|}\int_Q \tilde\xi\left(\frac{uv^{1-p'}}{\lambda}\right)=\frac{1}{|Q|}\int_{Q\cap\{v^{1-p'}\leq 1\}}+\frac{1}{|Q|}\int_{Q\cap\{v^{1-p'}> 1\}}=I_1+I_2.\]
 Since $\eta\in B_{p'}$, we get that $\eta(t)\lesssim t^{p'}$. 
 Therefore, since $\tilde\xi$ is a Young function we obtain that
 \[t\gtrsim \tilde\xi^{-1}(t)\gtrsim \eta^{-1}(t)\varphi^{-1}(t)\gtrsim t^{1/p'}\varphi^{-1}(t)\gtrsim (\varphi^{-1}(t))^{1+p/p'}=\varphi_p^{-1}(t).\]
 This estimate finally yields $\tilde\xi(t)\lesssim \varphi_p(t)$, for every $t\geq t_0$. As a consequence, $I_1$ is bounded by an absolute constant since $\lambda\geq \|u\|_{\varphi_p,Q}$ by virtue of item~\eqref{item a - lema: relacion maximal generalizada con peso A1} of Lemma~\ref{lema: relacion maximal generalizada con peso A1}.

 In order to deal with $I_2$, we use the upper type $r$ of $\tilde\xi$ combined with \eqref{eq: consecuencia relacion de inversas} to obtain
	\begin{align*}
	I_2&\lesssim \frac{1}{|Q|}\int_{Q\cap\{v^{1-p'}>1\}} \tilde\xi\left(\frac{u}{\lambda}\right)v^{r(1-p')}\\
	&=\frac{1}{|Q|}\int_Q \tilde\xi\left(\left(\frac{u}{\lambda}\right)^{1/p}\left(\frac{u}{\lambda}\right)^{1/p'}\right)v^{1-q'}\\
	&\lesssim \frac{1}{|Q|}\int_Q\varphi_p\left(\frac{u}{\lambda}\right)v^{1-q'}+\frac{1}{|Q|}\int_Q\frac{u}{\lambda}v^{1-q'}\\
	&\lesssim \frac{1}{|Q|}\int_Q\varphi_p\left(\frac{u}{\|u\|_{\varphi_p,Q,v^{1-q'}}}\right)v^{1-q'}+\left(\frac{1}{|Q|}\int_Q\frac{u}{\|u\|_{\varphi_p,Q,v^{1-q'}}}v^{1-q'}\right)\\
	&\lesssim \frac{v^{1-q'}(Q)}{|Q|}\\
	&\lesssim\left[v^{1-q'}\right]_{A_1} v^{1-q'}(x),
	\end{align*}
 where we have also used $\eta(t)\lesssim t^{p'}$ and $t\lesssim \varphi_p(t)$.
	 From both estimates we arrive to
  \[\left\|uv^{1-p'}\right\|_{\tilde\xi,Q}\lesssim \max\left\{1,v^{1-q'}(x)\right\}M_{\varphi_p, v^{1-q'}}u(x).\]
  We can conclude the thesis now by proceeding as in the previous lemma.
\end{proof}

The following lemma will be a key on our main proofs.

\begin{lema}\label{lema: M(Psi(v)) esencialmente constante sobre cubos}
Let $1<q<p$ such that $p'+1-q'>0$, $v\in \mathrm{RH}_\infty$ and $\Psi(t)=t^{p'+1-q'}\mathcal{X}_{(0,1)}(t)+t^{p'}\mathcal{X}_{(1,\infty)}(t)$. There exists a positive constant $C$ such that the inequality
\[M(\Psi\circ v)(x)\leq C\inf_Q M(\Psi\circ v)\]
holds for every cube $Q$ and almost every $x\in Q$. 
\end{lema}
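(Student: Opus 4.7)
\medskip

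\textbf{Proof proposal.} The strategy is to reduce the claim to the standard fact that for any weight $w$ in $\mathrm{RH}_\infty$, the maximal function $Mw$ is essentially constant on every cube, after first checking that $\Psi\circ v$ itself belongs to $\mathrm{RH}_\infty$. Throughout, recall that $\Psi$ is continuous and strictly increasing, with $\Psi(t)=t^{p'+1-q'}$ on $[0,1)$ and $\Psi(t)=t^{p'}$ on $[1,\infty)$, where both exponents are positive.

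First, I would verify that $\Psi\circ v\in\mathrm{RH}_\infty$. Fix a cube $R$ and let $a=\fint_R v$; by $v\in\mathrm{RH}_\infty$ and the monotonicity of $\Psi$, $\mathrm{ess\,sup}_R(\Psi\circ v)=\Psi(\mathrm{ess\,sup}_R v)\le \Psi([v]_{\mathrm{RH}_\infty}\,a)$. A standard Chebyshev-type argument using the $\mathrm{RH}_\infty$ bound $v\le [v]_{\mathrm{RH}_\infty}a$ on $R$ yields $|R\cap\{v>a/2\}|\ge |R|/(2[v]_{\mathrm{RH}_\infty})$. A short case analysis on the position of $a$ relative to $1$ then produces $\Psi([v]_{\mathrm{RH}_\infty}a)\lesssim \fint_R \Psi(v)$: when $[v]_{\mathrm{RH}_\infty}a<1$ one uses that $\Psi\equiv (\cdot)^{p'+1-q'}$ on $R$ and invokes the equivalence $\fint_R v^r\approx (\fint_R v)^r$ (valid for any $r>0$ whenever $v\in\mathrm{RH}_\infty$, again via the super-level set estimate); when $a\ge 1$ one instead restricts integration to $R\cap\{v>a/2\}\subset\{v\ge 1\}$ to get $\fint_R \Psi(v)\gtrsim a^{p'}\approx \Psi([v]_{\mathrm{RH}_\infty}a)$; the intermediate band $1/[v]_{\mathrm{RH}_\infty}\le a<1$ is handled by observing that both sides are bounded above and below by absolute constants.

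Next I would prove the general principle: if $w\in\mathrm{RH}_\infty$, then $Mw(x)\le C\,Mw(y)$ for every $x,y$ in any cube $Q$. Given any cube $Q'\ni x$, split into two cases. If $\ell(Q')\ge\ell(Q)$, let $\widetilde Q'$ be the cube concentric with $Q'$ of side length $3\ell(Q')$; since $Q\cap Q'\ne\emptyset$, $\widetilde Q'\supset Q\cup Q'$, whence for every $y\in Q$
\[
\fint_{Q'}w\le \frac{|\widetilde Q'|}{|Q'|}\fint_{\widetilde Q'}w=3^n\fint_{\widetilde Q'}w\le 3^n Mw(y).
\]
If $\ell(Q')<\ell(Q)$, then $Q'\subset 3Q$, so by the $\mathrm{RH}_\infty$ bound applied on $3Q$,
\[
\fint_{Q'}w\le \mathrm{ess\,sup}_{3Q}w\le [w]_{\mathrm{RH}_\infty}\fint_{3Q}w\le [w]_{\mathrm{RH}_\infty}Mw(y)
\]
for any $y\in Q$. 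Taking the supremum over $Q'\ni x$ and then the infimum over $y\in Q$ gives $Mw(x)\le C\inf_Q Mw$.

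Combining the two steps with $w=\Psi\circ v\in\mathrm{RH}_\infty$ closes the proof. The main technical obstacle is the first step: because $\Psi$ is only piecewise a power, one cannot directly invoke Lemma~\ref{lema: potencia negativa de RHinf en A1 y positivas en RHinf}(b), and the straddling case where the cube $R$ contains both $\{v<1\}$ and $\{v\ge 1\}$ with positive measure forces the careful Chebyshev argument on super-level sets of $v$ to compare $\Psi(\fint_R v)$ with $\fint_R \Psi(v)$.
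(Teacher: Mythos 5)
Your argument is correct, but it follows a genuinely different route from the paper. You factor the proof into two independent steps: first that $\Psi\circ v\in\mathrm{RH}_\infty$, and then a general principle that $Mw$ is pointwise essentially constant on every cube whenever $w\in\mathrm{RH}_\infty$, proved by the geometric dichotomy $\ell(Q')\geq\ell(Q)$ (where only the $3^n$ doubling of the enlarged cube is used) versus $\ell(Q')<\ell(Q)$ (where $Q'\subset 3Q$ and the $\mathrm{RH}_\infty$ bound on $3Q$ is used). The paper instead argues locally: it splits $v=v\mathcal{X}_{\mathbb{R}^n\setminus Q^*}+v\mathcal{X}_{Q^*}$ with $Q^*=4\sqrt{n}\,Q$ (using the quasi-subadditivity $\Psi(t_1+t_2)\leq 2^{p'}(\Psi(t_1)+\Psi(t_2))$), controls the far part by Lemma~\ref{lema: comparacion peso por caracteristica}, and controls the near part by bounding every average by $\Psi\left(\sup_{Q^*}v\right)$ and comparing this with $\frac{1}{|Q^*|}\int_{Q^*}\Psi(v)$ through Lemma~\ref{lema: potencia negativa de RHinf en A1 y positivas en RHinf}\,(b) applied to $v^{\alpha}$ and $v^{p'}$, together with the pointwise inequality $v^{p'}\leq v^{\alpha}$ on $\{v\leq 1\}$; that computation is, in substance, the $\mathrm{RH}_\infty$ property of $\Psi\circ v$ that you isolate as a lemma. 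Your second step buys generality and reusability (and gives the conclusion for every $x\in Q$, not just almost every $x$), while the paper's route avoids stating the $\mathrm{RH}_\infty$ property explicitly and leans on machinery it already has. Your first step could be streamlined exactly as in the paper: the Chebyshev super-level-set argument is not needed once one knows $v^{r}\in\mathrm{RH}_\infty$ for all $r>0$, since then $\Psi\left(\sup_R v\right)$ equals $\sup_R v^{\alpha}$ or $\sup_R v^{p'}$ according to whether $\sup_R v\leq 1$ or not, and $\int_R v^{p'}\leq\int_{R\cap\{v\leq1\}}v^{\alpha}+\int_{R\cap\{v>1\}}v^{p'}=\int_R\Psi(v)$.

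One small inaccuracy to fix in your case $a\geq 1$: the inclusion $R\cap\{v>a/2\}\subset\{v\geq1\}$ is only true when $a\geq 2$. The estimate survives anyway because $\Psi$ is increasing and $\Psi(a/2)\geq (a/2)^{p'}$ for all $a\geq 1$ (when $a/2<1$ one has $\Psi(a/2)=(a/2)^{\alpha}\geq(a/2)^{p'}$ since $\alpha<p'$), so $\frac{1}{|R|}\int_R\Psi(v)\geq\frac{1}{2[v]_{\mathrm{RH}_\infty}}\Psi(a/2)\gtrsim a^{p'}\gtrsim\Psi\left([v]_{\mathrm{RH}_\infty}a\right)$; state it this way rather than via the false inclusion.
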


\begin{proof}
Let $\alpha=p'+1-q'$. We have $0<\alpha<1$ since $q<p$. From the fact that $t^{p'}< t^\alpha$ for $0<t<1$, it is not difficult to see that
\[\Psi(t_1+t_2)\leq 2^{p'}(\Psi(t_1)+\Psi(t_2)),\]
for every $t_1,t_2\geq 0$.

Fix a cube $Q$ and $x\in Q$. We write $v=v\mathcal{X}_{\mathbb{R}^n\backslash Q^*}+v\mathcal{X}_{Q^*}=v_1+v_2$, where $Q^*=4\sqrt{n} Q$. Therefore
\[M(\Psi(v))(x)\leq 2^{p'}\left(M(\Psi(v_1))(x)+M(\Psi(v_2))(x)\right) .\]
By virtue of Lemma~\ref{lema: comparacion peso por caracteristica} applied with $\varphi(t)=t$ and $w=1$, we have that
\begin{align*}
M(\Psi(v_1))(x)&= M\left(\Psi(v)\mathcal{X}_{\mathbb{R}^n\backslash Q^*}\right)(x)\\
&\lesssim \inf_{Q} M\left(\Psi(v)\mathcal{X}_{\mathbb{R}^n\backslash Q^*}\right)\\
&\leq \inf_{Q} M(\Psi(v)).
\end{align*}
On the other hand, we shall estimate $M(\Psi(v_2))(x)$. If $R$ is any cube that contains $x$, we have 
\[\frac{1}{|R|}\int_R \Psi(v_2)=\frac{1}{|R|}\int_R \Psi(v)\mathcal{X}_{Q^*}\leq \Psi\left(\sup_{Q^*}v\right).\]
 We shall first assume that $0< \sup_{Q^*} v\leq 1$. In this case we have that $v(z)\leq 1$ for almost every $z$ in $Q^*$.  Therefore, since $v\in\mathrm{RH}_\infty$, by Lemma~\ref{lema: potencia negativa de RHinf en A1 y positivas en RHinf} we obtain
	\[\Psi\left(\sup_{Q^*} v\right)=\left(\sup_{Q^*} v\right)^{\alpha}=\sup_{Q^*} v^{\alpha}\leq \frac{\left[v^{\alpha}\right]_{\mathrm{RH_\infty}}}{|Q^*|}\int_{Q^*} v^{\alpha}=\frac{\left[v^{\alpha}\right]_{\mathrm{RH_\infty}}}{|Q^*|}\int_{Q^*} \Psi(v)\leq \left[v^{\alpha}\right]_{\mathrm{RH_\infty}}
	\inf_{Q}M(\Psi(v)).\]
	Now suppose that $\sup_{Q^*} v>1$. Let $Q_1=Q^*\cap\{v\leq 1\}$ and $Q_2=Q^*\cap\{v>1\}$. By proceeding as above, since $0<\alpha<1$, we have that
	\begin{align*}
	    \Psi\left(\sup_{Q^*} v\right)=\sup_{Q^*}v^{p'}\leq \frac{\left[v^{p'}\right]_{\mathrm{RH_\infty}}}{|Q^*|}\int_{Q^*} v^{p'}
	    &\leq \frac{\left[v^{p'}\right]_{\mathrm{RH_\infty}}}{|Q^*|}\left[\int_{Q_1} v^{\alpha}+\int_{Q_2} v^{p'}\right]\\
	    &= \frac{\left[v^{p'}\right]_{\mathrm{RH_\infty}}}{|Q^*|}\int_{Q^*} \Psi(v)\\
	    &\leq \left[v^{p'}\right]_{\mathrm{RH_\infty}}
	\inf_{Q }M(\Psi(v)).
	\end{align*}
 By taking supremum on $Q$, we get that
 \[M(\Psi(v_2))(x)\lesssim \inf_{Q}M(\Psi(v)).\]
 By combining the estimates for $M(\Psi(v_1))$ and $M(\Psi(v_2))$ we arrive to the desired inequality.
\end{proof}

The following technical estimate provides a way to deal with commutators of order $m$ by means of less order commutators.

\begin{lema}\label{lema: T_b^m en terminos de conmutadores de orden menor}
Let $b\in L^1_{\mathrm{loc}}$ and $T$ be a linear operator. Then, for every $\lambda\in \mathbb{R}$ and for almost every $x$ we have that
\[T_b^mf(x)=(b(x)-\lambda)^mTf(x)-T((b-\lambda)^mf)(x)-\sum_{k=1}^{m-1}C_{m,k}T_b^k((b-\lambda)^{m-k}f)(x),\]
where $C_{m,k}=m!/((m-k)!k!)$.
\end{lema}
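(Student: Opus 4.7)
The plan is to reduce the identity to its symmetric $\lambda=0$ case using a shift-invariance property of the commutator, and then establish the resulting clean identity by induction on $m$. The first step is to show that $T_b^k = T_{b-\lambda}^k$ for every $k\geq 0$ and every $\lambda\in\mathbb{R}$. This is an easy induction on $k$: the case $k=0$ is trivial, and in the inductive step the recursion $T_b^k f = b\,T_b^{k-1}f - T_b^{k-1}(bf)$ combined with the hypothesis $T_b^{k-1}=T_{b-\lambda}^{k-1}$ gives
\[T_b^k f = (b-\lambda)\,T_{b-\lambda}^{k-1}f - T_{b-\lambda}^{k-1}((b-\lambda)f) = T_{b-\lambda}^k f,\]
because the two $\lambda\,T_{b-\lambda}^{k-1}f$ terms produced when splitting $b=(b-\lambda)+\lambda$ cancel (here using the linearity of $T$ to pull $\lambda$ out of $T_{b-\lambda}^{k-1}(\lambda f)$).

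With this shift invariance at hand, replacing $b$ by $b-\lambda$ throughout the target statement reduces it to its $\lambda=0$ version; that is, it suffices to prove
\[T_b^m f \;=\; b^m\,Tf - T(b^m f) - \sum_{k=1}^{m-1}\binom{m}{k}\,T_b^k(b^{m-k}f),\]
which, after moving every $T_b^k$ term to one side, is equivalent to the cleaner identity
\[b^m\,Tf \;=\; \sum_{k=0}^{m}\binom{m}{k}\,T_b^k(b^{m-k}f),\]
where the extreme summands correspond to $T(b^m f)$ and $T_b^m f$.

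The final step is to prove this last identity by induction on $m$. The base case $m=1$ is just $T(bf)+T_b f = bTf$, immediate from the definition $T_b f = bTf - T(bf)$. For the inductive step, I would multiply the inductive hypothesis by $b$ and expand each $b\,T_b^k(b^{m-k}f)$ using the recursion rewritten as $b\,T_b^k h = T_b^{k+1}h + T_b^k(bh)$ (applied with $h=b^{m-k}f$). This produces two sums; reindexing $j=k+1$ in the first one and grouping equal indices via Pascal's rule $\binom{m}{k-1}+\binom{m}{k}=\binom{m+1}{k}$ yields exactly $\sum_{k=0}^{m+1}\binom{m+1}{k}T_b^k(b^{m+1-k}f)$, closing the induction. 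The only (minor) obstacle is careful bookkeeping of indices at this Pascal step; since the whole statement is an algebraic identity between linear operators, no analytic subtleties enter.
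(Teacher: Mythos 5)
Your argument is correct, and it is organized differently from the paper's. The paper proves the identity directly by induction on $m$, keeping $\lambda$ throughout: it writes $T_b^{m+1}f=[b,T_b^m]f=(b(x)-\lambda)T_b^mf-T_b^m((b-\lambda)f)$ (using the same cancellation of $\lambda$ you exploit, but only implicitly and in one step), then applies the inductive hypothesis \emph{twice} — once to $T_b^mf$ and once to $T_b^m((b-\lambda)f)$ — and regroups the resulting sums via Pascal's rule $C_{m,k}+C_{m,k-1}=C_{m+1,k}$. You instead isolate the shift invariance $T_b^k=T_{b-\lambda}^k$ as a preliminary lemma (a clean induction on $k$ that only uses linearity of $T$), which legitimately reduces the statement to $\lambda=0$, and then recast the $\lambda=0$ case as the symmetric identity $b^mTf=\sum_{k=0}^{m}\binom{m}{k}T_b^k(b^{m-k}f)$, proved by induction on $m$ using only the one-step recursion $b\,T_b^kh=T_b^{k+1}h+T_b^k(bh)$ and Pascal's rule. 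The net effect is the same combinatorial mechanism, but your version buys simpler bookkeeping: the inductive step never invokes the full $m$-level identity on a second function, and the binomial structure of the formula is displayed explicitly; the paper's version avoids the auxiliary shift-invariance lemma at the cost of a longer chain of regroupings. Both arguments are purely algebraic identities for linear operators, so no analytic issues arise in either route.
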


\begin{proof}
We proceed by induction on $m$. Fix $\lambda$ and observe that the result is immediate for $m=1$, since the last sum is empty. We assume that the equality holds for $m>1$ and we shall prove it for $m+1$. We have that
\begin{align*}
    T_b^{m+1}f(x)&=[b,T_b^m]f(x)=(b(x)-\lambda)T_b^mf(x)-T_b^m((b-\lambda)f)(x)\\
    &=(b(x)-\lambda)^{m+1}Tf(x)-(b(x)-\lambda)T((b-\lambda)^mf)(x)\\
    &\qquad -(b(x)-\lambda)\sum_{k=1}^{m-1}C_{m,k}T_b^k((b-\lambda)^{m-k}f)(x)-(b(x)-\lambda)^mT((b-\lambda)f)(x)\\
    &\qquad+T((b-\lambda)^{m+1}f)(x)+\sum_{k=1}^{m-1}C_{m,k}T_b^k((b-\lambda)^{m+1-k}f)(x)\\
    &=(b(x)-\lambda)^{m+1}Tf(x)-(b(x)-\lambda)T((b-\lambda)^mf)(x)\\
    &\qquad -\sum_{k=2}^{m}C_{m,k-1}T_b^k((b-\lambda)^{m+1-k}f)(x)-(b(x)-\lambda)^mT((b-\lambda)f)(x)+T((b-\lambda)^{m+1}f)(x)\\
    &=(b(x)-\lambda)^{m+1}Tf(x)-(b(x)-\lambda)^mT((b-\lambda)f)(x)-\sum_{k=1}^{m}C_{m,k-1}T_b^k((b-\lambda)^{m+1-k}f)(x)\\
    &=(b(x)-\lambda)^{m+1}Tf(x)-(b(x)-\lambda)^mT((b-\lambda)f)(x)-\sum_{k=1}^{m}C_{m+1,k}T_b^k((b-\lambda)^{m+1-k}f)(x)\\
    &\qquad +\sum_{k=1}^m C_{m,k}T_b^k((b-\lambda)^{m+1-k}f)(x)\\
    &=(b(x)-\lambda)^{m+1}Tf(x)-T((b-\lambda)^{m+1}f)(x)-\sum_{k=1}^{m}C_{m+1,k}T_b^k((b-\lambda)^{m+1-k}f)(x),
\end{align*}
which yields the desired estimate. Notice that we have used the inductive hypothesis twice and the fact that $C_{m,k}+C_{m,k-1}=C_{m+1,k}$. The proof is complete.
\end{proof}

	The following theorem establishes a strong $(p,p)$ Fefferman-Stein estimate for higher order commutators and a proof can be found in \cite{Perez_Sharp97}.
	\begin{teo}\label{teo: F-S tipo fuerte para conmutador de orden m}
	Let $1<p<\infty$, $m\in\mathbb{N}$, $\varepsilon>0$ and $\varphi_\varepsilon(\lambda)=\lambda(1+\log^+\lambda)^{(m+1)p-1+\varepsilon}$. Let $T$ be a Calderón-Zygmund operator and $b\in\mathrm{BMO}$. Then there exists a positive constant $C$ such that the inequality
	\[\int_{\mathbb{R}^n}|T_b^mf(x)|^pw(x)\,dx\leq C\|b\|_{\mathrm{BMO}}^{mp}\int_{\mathbb{R}^n}|f(x)|^pM_{\varphi_\varepsilon}w(x)\,dx\]
	holds for every nonnegative and locally integrable function $w$.
	\end{teo}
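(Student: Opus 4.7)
The plan is to follow Pérez's sharp maximal function approach, proceeding by induction on $m$. The base case $m=0$ is Pérez's Fefferman--Stein type estimate for Calderón--Zygmund operators (\cite{P94London}), which gives the logarithmic exponent $p-1+\varepsilon$; this matches $(m+1)p-1+\varepsilon$ at $m=0$. For the inductive step I would combine three ingredients: a pointwise bound for the sharp maximal function $M^\#_\delta(T_b^m f)$, the classical Fefferman--Stein inequality $\int(M_\delta g)^p w\leq C\int(M^\#_\delta g)^p Mw$ valid for any nonnegative locally integrable $w$, and a generalized Fefferman--Stein estimate for the Orlicz maximal $M_{\Phi_m}$.

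The heart of the argument is the pointwise sharp maximal estimate
\[M^\#_\delta(T_b^m f)(x)\leq C\|b\|_{\mathrm{BMO}}^{m}M_{\Phi_m}f(x)+C\sum_{j=0}^{m-1}\|b\|_{\mathrm{BMO}}^{m-j}M_\epsilon(T_b^{j}f)(x),\]
valid for $0<\delta<\epsilon<1$ sufficiently small. To prove it, I would fix a cube $Q\ni x$, apply Lemma~\ref{lema: T_b^m en terminos de conmutadores de orden menor} with $\lambda=b_Q$, and control each resulting piece. The term $(b-b_Q)^m Tf$ is handled by the generalized Hölder inequality \eqref{eq: Holder generalizada con promedios Luxemburgo}, pairing the exponential John--Nirenberg bound from Lemma~\ref{lema: norma exponencial de la oscilacion media acotada por la norma BMO} with $M_\epsilon(Tf)(x)$. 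The term $T((b-b_Q)^m f)$ is split via $f=f\mathcal{X}_{2Q}+f\mathcal{X}_{\mathbb{R}^n\setminus 2Q}$: the local part produces $\|b\|^m_{\mathrm{BMO}}M_{\Phi_m}f(x)$ upon pairing the exponential with the $\Phi_m$ Luxemburg average, while the far part is controlled using the kernel smoothness \eqref{eq:prop del nucleo}, a dyadic annular decomposition of $\mathbb{R}^n\setminus 2Q$, and the polynomial growth $|b_Q-b_{2^kQ}|\lesssim k\|b\|_{\mathrm{BMO}}$ from Lemma~\ref{lema: diferencia de promedios en cubos dilatados acotados por norma BMO}, producing a convergent $k^m/2^k$-type series against $M_{\Phi_m}f(x)$. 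The lower-order pieces $T_b^k((b-b_Q)^{m-k}f)$ for $1\leq k\leq m-1$ yield the summands $M_\epsilon(T_b^k f)$ by an analogous John--Nirenberg argument.

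With this pointwise estimate in hand, the Fefferman--Stein inequality (applied after a standard truncation to ensure $\|T_b^m f\|_{L^p(w)}<\infty$ a priori) gives
\[\int|T_b^m f|^p w\leq C\|b\|^{mp}_{\mathrm{BMO}}\int(M_{\Phi_m}f)^p Mw+C\sum_{j=0}^{m-1}\|b\|^{(m-j)p}_{\mathrm{BMO}}\int(M_\epsilon T_b^j f)^p Mw.\]
The lower-order integrals are absorbed using the inductive hypothesis (with $Mw$ replacing $w$), Lemma~\ref{lema: acotacion LlogL por potencia} to trade $M_\epsilon$ for a slightly larger $L^p$-power, and the iteration $M\circ M_{\Phi_\alpha}\lesssim M_{\Phi_{\alpha+1+\eta}}$ for arbitrarily small $\eta>0$, which converts the resulting weight back to $M_{\varphi_\varepsilon}w$. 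The main term $\int(M_{\Phi_m}f)^p Mw$ is then bounded by a generalized Fefferman--Stein estimate for $M_{\Phi_m}$, yielding precisely the logarithmic exponent $(m+1)p-1+\varepsilon$. The main obstacle is the delicate bookkeeping so that this exponent lands exactly at $(m+1)p-1+\varepsilon$ and not at a strictly larger integer: this requires exploiting the sharp form $M\circ M_{L(\log L)^\alpha}\lesssim M_{L(\log L)^{\alpha+1+\eta}}$ for every $\eta>0$, and balancing the $\eta$-losses across the induction so that they are all absorbed into the final $\varepsilon$ budget.
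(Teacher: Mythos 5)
The paper does not actually prove this statement: it is quoted as a known result, with the proof attributed to \cite{Perez_Sharp97}. So the comparison here is between your outline and Pérez's argument, and while your ingredients (the pointwise bound for $M^\#_\delta(T_b^m f)$ via Lemma~\ref{lema: T_b^m en terminos de conmutadores de orden menor}, John--Nirenberg, kernel smoothness, and induction on $m$) are indeed the standard ones from that circle of ideas, the way you assemble them cannot produce the stated exponent $(m+1)p-1+\varepsilon$.

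The decisive gap is the passage through the arbitrary-weight Fefferman--Stein inequality $\int (M_\delta g)^p w\le C\int (M^\#_\delta g)^p Mw$. First, this inequality itself needs justification or a reference: the usual good-$\lambda$ proof requires $w\in A_\infty$, and it is not obvious that one extra maximal function on the right suffices for general $w$. Second, and more importantly, even granting it, your main term becomes $\int (M_{\Phi_m}f)^p\,Mw$, and applying the two-weight bound for $M_{\Phi_m}\approx M^{m+1}$ at the weight $Mw$ yields $M_{L(\log L)^{(m+1)p-1+\varepsilon'}}(Mw)\approx M_{L(\log L)^{(m+1)p+\varepsilon'}}w$, because composing with one more $M$ raises the logarithmic exponent by exactly $1$ (this is the same fact you invoke as $M\circ M_{L(\log L)^\alpha}\lesssim M_{L(\log L)^{\alpha+1+\eta}}$). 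No balancing of $\eta$-losses across the induction can recover a full unit, so the chain as written proves the estimate with $M_{L(\log L)^{(m+1)p+\varepsilon}}w$ on the right, strictly weaker than the claimed $M_{L(\log L)^{(m+1)p-1+\varepsilon}}w$; the lower-order terms with $j=m-1$ overshoot in the same way once $M_\epsilon(T_b^jf)$ is converted into an $L^p$ bound. The loss is already visible at $m=0$: the route $M^\#_\delta(Tf)\lesssim Mf$, then Fefferman--Stein with $Mw$, then Pérez's two-weight bound for $M$, gives exponent $p+\varepsilon$, not the $p-1+\varepsilon$ of \cite{P94London}, which is precisely why the proofs in \cite{P94London}, \cite{Perez_Sharp97} and \cite{PP01} are organized so that $w$ is never replaced by $Mw$ for an arbitrary weight. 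To repair your plan you would need either to prove the top-order inequality $\int (M_{\Phi_m}f)^p w\lesssim \int |f|^p M_{L(\log L)^{(m+1)p-1+\varepsilon}}w$ directly against $w$ and feed the sharp-function step with an $A_\infty$ majorant of $w$ (so that the unweighted-loss version of Fefferman--Stein is never invoked), or simply follow the argument of \cite{Perez_Sharp97} as the paper does.
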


 The next theorem was proved in \cite{BCP22(JMS)} and gives a mixed inequality of Fefferman-Stein type for Calderón-Zygmund operators. 
	
	\begin{teo}\label{teo: mixta tipo F-S para T}
        Let $u$ be a nonnegative and locally integrable function, $q>1$ and $v\in \mathrm{RH}_\infty\cap A_q$. Let $\delta>0$ and $T$ be a CZO. If $\varphi(\lambda)=\lambda(1+\log^+\lambda)^{\delta}$, then for every $p>\max\{q, 1+1/\delta\}$ the inequality
		\[uv\left(\left\{x\in \mathbb{R}^n: \frac{|T(fv)(x)|}{v(x)}>t\right\}\right)\leq \frac{C}{t}\int_{\mathbb{R}^n}{|f(x)|}M_{\varphi, v^{1-q'}}u(x)M(\Psi\circ v)(x)\,dx\]
		holds for every positive $t$.
	\end{teo}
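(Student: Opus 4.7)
The plan is to adapt the classical Calderón--Zygmund weak-type $(1,1)$ argument, but carry out the CZ decomposition with respect to the doubling measure $v\,dx$ (doubling since $v\in A_q$), so that the bad part satisfies $\int b_jv\,dx=0$ in Lebesgue measure. By the joint homogeneity in $(f,t)$, one reduces to $t=1$. Applying the CZ decomposition of $f$ at level $1$ relative to $v\,dx$ yields pairwise disjoint cubes $\{Q_j\}$ with $|f|\lesssim 1$ $v$-a.e.\ on $\Omega^c=\mathbb{R}^n\setminus\bigcup_j Q_j$ and $1<v(Q_j)^{-1}\int_{Q_j}|f|v\lesssim 1$ on each $Q_j$. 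Write $f=g+b$ with $g=f\mathcal{X}_{\Omega^c}+\sum_j \overline f_j\mathcal{X}_{Q_j}$, where $\overline f_j=v(Q_j)^{-1}\int_{Q_j}fv$, and $b_j=(f-\overline f_j)\mathcal{X}_{Q_j}$; then $|g|\lesssim 1$ pointwise and $\int b_j v\,dx=0$. Setting $\tilde Q_j=4\sqrt n\,Q_j$ and $\tilde\Omega=\bigcup_j\tilde Q_j$, split
\[
uv(\{|T(fv)|/v>1\})\le uv(\{|T(gv)|/v>\tfrac12\})+uv(\tilde\Omega)+uv(\{|T(bv)|/v>\tfrac12\}\setminus\tilde\Omega).
\]

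For the \emph{good piece}, the key observation is that the hypothesis $p>1+1/\delta$ rewrites as $\delta>p'-1$, which is precisely what the Pérez strong-type Fefferman--Stein estimate for $T$ (the $m=0$ case of Theorem~\ref{teo: F-S tipo fuerte para conmutador de orden m}) needs at exponent $p'$ with the Young function $\varphi_{\delta'}(\lambda)=\lambda(1+\log^+\lambda)^{\delta'}$ for some $\delta'\in(p'-1,\delta)$. First apply Chebyshev at $p'$ to reduce to $\int|T(gv)|^{p'}uv^{1-p'}$; the Pérez inequality then introduces $M_{\varphi_{\delta'}}(uv^{1-p'})$; finally Lemma~\ref{lema: estimacion de maximal del producto de pesos} (with $m=0$, $\varepsilon=\delta'$) replaces this by $M_{\varphi_{\delta'},v^{1-q'}}u\cdot v^{-p'}(\Psi\circ v)$. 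The $v^{\pm p'}$ factors cancel and $|g|^{p'}\le C|g|$. The contribution on $\Omega^c$ is already of the desired form, while on each $Q_j$ one writes $|\overline f_j|\le v(Q_j)^{-1}\int_{Q_j}|f|v$ and invokes Lemma~\ref{lema: M(Psi(v)) esencialmente constante sobre cubos} to pull $M(\Psi\circ v)$ inside as a pointwise weight.

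For $uv(\tilde\Omega)$, sum $uv(\tilde Q_j)$ using the CZ estimate $v(Q_j)\le \int_{Q_j}|f|v$ to obtain
\[
uv(\tilde Q_j)\le \int_{Q_j}|f(x)|v(x)\,\frac{uv(\tilde Q_j)}{v(Q_j)}\,dx,
\]
and dominate the integrand pointwise on $Q_j$ by $M_{\varphi,v^{1-q'}}u(x)\cdot M(\Psi\circ v)(x)$ via: the $A_q$-doubling of $v$ (so $v(\tilde Q_j)\approx v(Q_j)$), the $\mathrm{RH}_\infty$ essential constancy of $v$ and of $M(\Psi\circ v)$ through Lemma~\ref{lema: M(Psi(v)) esencialmente constante sobre cubos}, and the generalized Hölder inequality \eqref{eq: desigualdad de Holder generalizada} combined with $v^{1-q'}\in A_1$ from Lemma~\ref{lema: potencia negativa de RHinf en A1 y positivas en RHinf}(a) and Lemma~\ref{lema: relacion maximal generalizada con peso A1}(a) to pass from an unweighted $\varphi$-average of $u$ to $M_{\varphi,v^{1-q'}}u(x)$. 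For the \emph{bad piece off $\tilde\Omega$}, Chebyshev at exponent $1$ gives $\le 2\int_{\tilde\Omega^c}|T(bv)|u$; the cancellation $\int b_jv=0$ combined with \eqref{eq:prop del nucleo} yields, for $x\notin\tilde Q_j$ with center $c_j$, the pointwise bound $|T(b_jv)(x)|\lesssim \ell(Q_j)|x-c_j|^{-n-1}\int_{Q_j}|b_j|v$. Integrating $u(x)\,dx$ over $\tilde Q_j^c$ produces a maximal average of $u$ near $Q_j$, which the same toolkit converts into $M_{\varphi,v^{1-q'}}u$ at a point of $Q_j$, while the $v$-factor is absorbed into $M(\Psi\circ v)$.

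The \emph{main obstacle} is the pointwise estimate on $Q_j$
\[
v(x)\,\frac{uv(\tilde Q_j)}{v(Q_j)}\lesssim M_{\varphi,v^{1-q'}}u(x)\,M(\Psi\circ v)(x),
\]
since getting both right-hand factors to emerge from a single averaged product requires simultaneously invoking Lemmas~\ref{lema: potencia negativa de RHinf en A1 y positivas en RHinf}, \ref{lema: estimacion de maximal del producto de pesos}, \ref{lema: relacion maximal generalizada con peso A1} and \ref{lema: M(Psi(v)) esencialmente constante sobre cubos}. Splitting $uv$ via generalized Hölder into a $\varphi$-average of $u$ (handled by $v^{1-q'}\in A_1$) and a complementary average of $v$ (handled by $\mathrm{RH}_\infty$ and the two-branch structure of $\Psi$) is the crux.
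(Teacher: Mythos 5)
Your outline follows the same route as the paper's argument (the theorem itself is quoted from \cite{BCP22(JMS)}; the in-paper analogue is the case $m=1$ of Theorem~\ref{teo: mixta tipo F-S para T_b^m}): Calderón--Zygmund decomposition with respect to $v\,dx$, the three-term splitting, Chebyshev at exponent $p'$ followed by the Pérez strong-type estimate (Theorem~\ref{teo: F-S tipo fuerte para conmutador de orden m} with $m=0$, i.e.\ \cite{P94London}) and Lemma~\ref{lema: estimacion de maximal del producto de pesos}, the cancellation $\int h_jv=0$ with \eqref{eq:prop del nucleo} for the bad part, and generalized Hölder plus $\mathrm{RH}_\infty$ and doubling for $uv(\tilde\Omega)$. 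Your observation that $p>1+1/\delta$ is exactly $\delta>p'-1$, which is what the Pérez estimate at exponent $p'$ requires with $\varphi_{\delta'}(\lambda)=\lambda(1+\log^+\lambda)^{\delta'}$, $\delta'\in(p'-1,\delta)$, is the right one (Theorem~\ref{teo: F-S tipo fuerte para conmutador de orden m} and Lemma~\ref{lema: estimacion de maximal del producto de pesos} are stated for $m\in\mathbb{N}$, so the ``$m=0$'' instances need a word, but their proofs go through verbatim).

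There is, however, a genuine gap in the good part. You estimate $uv(\{|T(gv)|/v>\tfrac12\})$ without removing $\tilde\Omega$, so after Pérez and Lemma~\ref{lema: estimacion de maximal del producto de pesos} you must handle $\sum_j\overline{f}_j\int_{Q_j}M_{\varphi_{\delta'},v^{1-q'}}u\,\Psi(v)$, and the only decoupling tool you invoke is Lemma~\ref{lema: M(Psi(v)) esencialmente constante sobre cubos}. That is not enough: $M_{\varphi_{\delta'},v^{1-q'}}u$ need not be essentially constant on $Q_j$ when $u$ has mass inside $Q_j$, and the needed inequality $\overline{f}_j\int_{Q_j}M_{\varphi_{\delta'}}u\lesssim\int_{Q_j}f\,M_{\varphi}u$ is false in general. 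For instance, take $v\equiv1$, $u=\varepsilon^{-n}\mathcal{X}_{B(c_j,\varepsilon)}$ a near point mass at the centre of $Q_j$, and $f$ of size slightly larger than $1$ on a set of proportional measure far from $c_j$ (so $Q_j$ is a genuine CZ cube with $\overline{f}_j\approx1$): then $\int_{Q_j}M_{\varphi_{\delta'}}u\approx\log^{1+\delta'}(\ell(Q_j)/\varepsilon)$ while $\int_{Q_j}f\,M_{\varphi}u\approx\log^{\delta}(\ell(Q_j)/\varepsilon)$, and $1+\delta'>p'>\delta$ whenever $\delta<p'$, so your intermediate quantity eventually exceeds the right-hand side of the theorem and the chain cannot close. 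The missing idea is precisely what the paper does: restrict the good-part level set to $\mathbb{R}^n\setminus\Omega^*$ from the outset, so that only $u^*=u\mathcal{X}_{\mathbb{R}^n\setminus\Omega^*}\le u_j^*=u\mathcal{X}_{\mathbb{R}^n\setminus Q_j^*}$ enters the weight, and then apply Lemma~\ref{lema: comparacion peso por caracteristica} (which you never invoke) to make $M_{\varphi_{\delta'},v^{1-q'}}u_j^*$ essentially constant on $Q_j$; this is what legitimizes exchanging the $v$-average $\overline{f}_j$ with the weight, and the discarded mass of $u$ over $\Omega^*$ is exactly what your separate $uv(\tilde\Omega)$ term absorbs. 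With that correction, your treatments of $uv(\tilde\Omega)$ and of the bad part are sound.
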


 Concerning mixed estimates of Fefferman-Stein type for operators associated to kernels with less regularity, the following theorem was also established in \cite{BCP22(JMS)}. Due to the nature of these kernels, additional conditions are required on the Young functions involved.

 \begin{teo}\label{teo: F-S para T Hormander}
	Let $\xi$ be a Young function such that $\tilde\xi$ has an upper type $r$ and a lower type $s$, for some $1<s<r$. Let $T$ be an operator as in \eqref{eq: representacion integral de T}, with kernel $K\in H_{\xi}$. Assume that there exist $1<p<r'$ and Young functions $\eta,\varphi$ such that $\eta\in B_{p'}$ and $\eta^{-1}(\lambda)\varphi^{-1}(\lambda)\lesssim \tilde\xi^{-1}(\lambda)$, for every $\lambda\geq \lambda_0$. If $u$ is a nonnegative and locally integrable function and $v\in \mathrm{RH}_\infty\cap A_q$ with $q=1+(p-1)/r$ then the inequality
	\[uv\left(\left\{x\in \mathbb{R}^n: \frac{|T(fv)(x)|}{v(x)}>t\right\}\right)\leq \frac{C}{t}\int_{\mathbb{R}^n}|f(x)|M_{\varphi_p, v^{1-q'}}u(x)M(\Psi(v))(x)\,dx\]
	holds for every $t>0$, where $\varphi_p(\lambda)=\varphi(\lambda^{1/p})$.
\end{teo}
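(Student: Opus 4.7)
The approach adapts what is presumably the proof of Theorem~\ref{teo: mixta tipo F-S para T_b^m} to kernels satisfying the $L^\zeta$-- and $L^{\xi,m}$--Hörmander conditions, in parallel with how Theorem~\ref{teo: F-S para T Hormander} extends Theorem~\ref{teo: mixta tipo F-S para T} in the non-commutator case. Replacing $b$ with $b/\|b\|_{\mathrm{BMO}}$ and $f$ with $tf$, we may assume throughout that $t=1$ and $\|b\|_{\mathrm{BMO}}=1$. The plan is then to perform a Calderón--Zygmund decomposition of $\Phi_m(|f|)v$ at level one (possible since $v\,dx$ is doubling, as $v\in A_q$). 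This produces pairwise disjoint dyadic cubes $\{Q_j\}$ with
\[1 < \frac{1}{|Q_j|}\int_{Q_j}\Phi_m(|f|)v \lesssim 1 \quad\text{and}\quad \Phi_m(|f(x)|)v(x)\leq 1 \text{ a.e. outside } \Omega:=\bigcup_j Q_j.\]
Decompose $fv=g+h$ with $h=\sum_j h_j$, $h_j=(fv-(fv)_{Q_j})\mathcal{X}_{Q_j}$ mean zero on $Q_j$, and $g$ controlled pointwise by Jensen applied to the CZ stopping condition. The level set splits as
\[uv\!\left(\!\left\{\tfrac{|T_b^m(fv)|}{v}>1\right\}\!\right) \leq uv\!\left(\!\left\{\tfrac{|T_b^m g|}{v}>\tfrac12\right\}\!\right) + uv\!\left(\!\left\{\tfrac{|T_b^m h|}{v}>\tfrac12\right\}\!\right).\]

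For the good part I would invoke a weighted strong $(p,p)$ Fefferman--Stein estimate for $T_b^m$ with kernel in $H_{\xi,m}$, the commutator analogue of the strong inequality underlying Theorem~\ref{teo: F-S para T Hormander}, obtainable from the results in \cite{Lorente-Martell-Perez-Riveros}. Chebyshev at exponent $p$ applied to $w=uv^{1-p}$, together with this strong estimate, yields schematically
\[uv\!\left(\!\left\{\tfrac{|T_b^m g|}{v}>\tfrac12\right\}\!\right) \lesssim \int |T_b^m g|^p\,u\,v^{1-p} \lesssim \int |g|^p\,M_{\tilde\xi}\bigl(uv^{1-p'}\bigr)\,v^{p-1}.\]
Lemma~\ref{lema: estimacion de maximal del producto de pesos - 2} then converts $M_{\tilde\xi}(uv^{1-p'})$ into $M_{\varphi_p,v^{1-q'}}u\cdot v^{-p'}(\Psi\circ v)$, while the CZ stopping condition controls $|g|^p v$ by $\Phi_m(|f|)v$ (on the cubes via Jensen, outside by the pointwise bound). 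Since $\Psi\circ v\leq M(\Psi\circ v)$, the claimed right-hand side drops out for this piece.

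For the bad part, set $Q_j^*=4\sqrt n\, Q_j$, $\Omega^*=\bigcup_j Q_j^*$, and split
\[uv\!\left(\!\left\{\tfrac{|T_b^m h|}{v}>\tfrac12\right\}\!\right) \leq uv(\Omega^*) + 2\int_{\mathbb{R}^n\setminus\Omega^*}|T_b^m h(x)|\,u(x)\,dx.\]
The first summand is controlled by summing over $j$ using $v\in\mathrm{RH}_\infty\cap A_q$ to transfer $v$ from $Q_j^*$ to $Q_j$, the CZ stopping $|Q_j|\lesssim\int_{Q_j}\Phi_m(|f|)v$, and Lemmas~\ref{lema: comparacion peso por caracteristica} and~\ref{lema: M(Psi(v)) esencialmente constante sobre cubos} to replace $M_{\varphi_p,v^{1-q'}}u$ and $M(\Psi\circ v)$ by their infimum on $Q_j$. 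For the outer integral, I would apply Lemma~\ref{lema: T_b^m en terminos de conmutadores de orden menor} to each $h_j$ with $\lambda=b_{Q_j}$, obtaining
\[T_b^m h_j(x) = (b(x)-b_{Q_j})^m T h_j(x) - T\bigl((b-b_{Q_j})^m h_j\bigr)(x) - \sum_{k=1}^{m-1} C_{m,k}\,T_b^k\bigl((b-b_{Q_j})^{m-k}h_j\bigr)(x).\]
For $x\notin Q_j^*$, each $T$--term is rewritten, using the mean-zero of $(b-b_{Q_j})^{m-k}h_j$, as an integral against $K(x-y)-K(x-c_j)$; the condition \eqref{eq: condicion Hormander} for $K\in H_\zeta$ then handles the zero-order piece, and \eqref{eq: condicion Hormander - m} for $K\in H_{\xi,m}$ absorbs the factor $k^m$ coming from the BMO displacement $|b_{Q_j}-b_{2^kQ_j}|\lesssim k$ of Lemma~\ref{lema: diferencia de promedios en cubos dilatados acotados por norma BMO}. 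The generalized Hölder inequality \eqref{eq: Holder generalizada con promedios Luxemburgo}, paired with Lemma~\ref{lema: norma exponencial de la oscilacion media acotada por la norma BMO} and the hypothesis $\tilde\xi^{-1}(\lambda)\zeta^{-1}(\lambda)(\log\lambda)^m\lesssim\lambda$, matches the exponential Luxemburg norm of $(b-b_{Q_j})^{m-k}$ with an $L^{\tilde\xi}$-- or $L^{\tilde\zeta}$--norm of $h_j$, which Jensen controls by the CZ average of $\Phi_m(|f|)$. The lower-order commutators are treated by induction on $m$, each reduction consuming one logarithmic factor against the $H_{\xi,m}$ control. Summing in $j$ and pulling $M_{\varphi_p,v^{1-q'}}u\cdot M(\Psi\circ v)$ out as its infimum on $Q_j$ (via Lemmas~\ref{lema: comparacion peso por caracteristica} and~\ref{lema: M(Psi(v)) esencialmente constante sobre cubos}) produces the desired right-hand side.

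The main obstacle is the bookkeeping in this last step: one must simultaneously match the BMO-exponential factors $(b-b_{Q_j})^{m-k}$, the polynomial growth $k^m$ coming from the $H_{\xi,m}$ annular sum and the BMO displacement, and the mixed $\tilde\xi$/$\zeta$ factorization dictated by the hypothesis on their inverses. The interplay of these three $\log$-type quantities is delicate, but it is precisely what the Young functions in the statement are designed to accommodate, as illustrated by the explicit family $\tilde\xi,\zeta,\eta,\varphi$ exhibited right after the theorem.
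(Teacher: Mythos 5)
There is a fundamental mismatch: the statement you were asked to prove is the \emph{non-commutator} mixed Fefferman--Stein estimate for $T$ itself, with kernel $K\in H_\xi$ and a right-hand side that is linear in $|f|/t$; in the paper this result is not proved at all but quoted from \cite{BCP22(JMS)} and then used as an ingredient. Your proposal instead sketches a proof of Theorem~\ref{teo: mixta tipo F-S para T_b^m Hormander} (the commutator $T_b^m$ with $K\in H_\zeta\cap H_{\xi,m}$, the condition $\tilde\xi^{-1}\zeta^{-1}(\log\lambda)^m\lesssim\lambda$, the $\Phi_m$-modular right-hand side, induction on $m$, Lemma~\ref{lema: T_b^m en terminos de conmutadores de orden menor}, etc.). That is a different theorem with different hypotheses ($1<s<r<2$, $r<p<r'$, an extra Young function $\zeta$), and nothing in your argument specializes back to the claimed inequality for $T$; in particular you never produce the linear-in-$|f|$ bound with $M_{\varphi_p,v^{1-q'}}u\,M(\Psi\circ v)$ for the plain operator, which is exactly what the statement asserts.

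Even judged as a proof of the commutator theorem, the outline has a genuine gap at the point where the paper is forced to invoke the very statement under discussion: you claim the term $T\bigl((b-b_{Q_j})^{m-k}h_j\bigr)$ can be rewritten ``using the mean-zero of $(b-b_{Q_j})^{m-k}h_j$'' as an integral against $K(x-y)-K(x-c_j)$. Only $h_j$ has vanishing mean (against $v\,dx$ in the paper's decomposition, or $dx$ in yours); multiplying by $(b-b_{Q_j})^{m-k}$ destroys the cancellation, so the Hörmander subtraction is not available for those pieces. This is precisely why the paper handles $\sum_j T((b-b_{Q_j})^m h_j v)$ by applying Theorem~\ref{teo: F-S para T Hormander} as a black box (and Theorem~\ref{teo: mixta tipo F-S para T} in the CZO case), rather than by direct kernel estimates. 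A smaller but real inconsistency: you apply Chebyshev at exponent $p$ with weight $uv^{1-p}$ and then quote Lemma~\ref{lema: estimacion de maximal del producto de pesos - 2}, which is formulated for $uv^{1-p'}$; the paper's scheme uses the dual exponent $p'$ throughout, and with your normalization the lemma does not apply as stated.
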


	\section{Proof of Theorem~\ref{teo: mixta tipo F-S para T_b^m}}\label{seccion: F-S para OCZ}
	We devote this section to prove Theorem~\ref{teo: mixta tipo F-S para T_b^m}. Since we proceed by induction, we split the proof in two parts.
	\begin{proof}[Proof of Theorem~\ref{teo: mixta tipo F-S para T_b^m}, case $m=1$]
	 It will be enough to assume that $u$ is bounded and that $f$ is a positive bounded function with compact support. We can also assume without loss of generality that $\|b\|_{\mathrm{BMO}}=1$. Fixed $t>0$, we  perform the Calder\'on-Zygmund decomposition of $f$ at level $t$ with respect to the measure $d\mu(x)=v(x)\,dx$, which is doubling since $v$ is an $A_\infty$ weight. We obtain a collection of disjoint dyadic cubes $\{Q_j\}_{j=1}^\infty$ satisfying $t<f_{Q_j}^v\leq Ct$, where $f_{Q_j}^v$ stands for the weighted average
	\[\frac{1}{v(Q_j)}\int_{Q_j}f(y)v(y)\,dy.\]
	If $\Omega=\bigcup_{j=1}^{\infty}Q_j$, then $f(x)\leq t$ for almost every $x\in{\mathbb{R}}^n\backslash\Omega$. We also write $f=g+h$, where
	\begin{equation*}
	g(x)=\left\{
	\begin{array}{ccl}
	f(x),& \textrm{ if } &x\in\mathbb{R}^n\backslash\Omega;\\
	f_{Q_j}^v,&\textrm{ if }& x\in Q_j,
	\end{array}
	\right.
	\end{equation*}
	and $h(x)=\sum_{j=0}^{\infty}{h_j(x)}$, with
	\begin{equation*}
	h_j(x)=\left(f(x)-f_{Q_j}^v\right)\mathcal{X}_{Q_j}(x).
	\end{equation*}
	From these definitions we get that $g(x)\leq Ct$ almost everywhere, every
	$h_j$ is supported on $Q_j$ and
	\begin{equation}\label{eq: hj integra cero contra v}
	\int_{Q_j}h_j(y)v(y)\,dy=0.
	\end{equation}
	Let $Q_j^*=4\sqrt{n}Q_j$, and denote $\Omega^*=\bigcup_j Q_j^*$. We write
	\begin{align*}
	uv\left(\left\{x\in \mathbb{R}^n: \left|\frac{T_b(fv)}{v}\right|>t\right\}\right)&\leq uv\left(\left\{x\in \mathbb{R}^n\backslash \Omega^*: \left|\frac{T_b(gv)}{v}\right|>\frac{t}{2}\right\}\right) + uv(\Omega^*)\\
	&+uv\left(\left\{x\in \mathbb{R}^n\backslash \Omega^*: \left|\frac{T_b(hv)}{v}\right|>\frac{t}{2}\right\}\right)\\
	&= I_1+I_2+I_3.
	\end{align*}
	Let us estimate every term above separately. Fix $\varepsilon>0$, $p>\max\{q, 1+2/\varepsilon\}$ and set $u^*=u\mathcal{X}_{\mathbb{R}^n\backslash \Omega^*}$.  For $I_1$, we combine Tchebycheff inequality with Theorem~\ref{teo: F-S tipo fuerte para conmutador de orden m} applied with $p'$, $2(1-p')+\varepsilon>0$ and $w=u^*v^{1-p'}$ in order to get
\[I_1\leq\frac{C}{t^{p'}}\int_{\mathbb{R}^n} |T_b(gv)|^{p'}uv^{1-p'}\mathcal{X}_{\mathbb{R}^n\backslash \Omega^*}
	=\frac{C}{t^{p'}}\int_{\mathbb{R}^n} |T_b(gv)|^{p'}u^*v^{1-p'}
	\leq\frac{C}{t^{p'}}\int_{\mathbb{R}^n} (gv)^{p'}M_{\Phi_1^\varepsilon}\left(u^*v^{1-p'}\right).\]
By applying Lemma~\ref{lema: estimacion de maximal del producto de pesos} we have that
	\begin{equation}\label{eq: teo: mixta tipo F-S para T_b^m - eq1}
	M_{\Phi_1^\varepsilon}\left(u^*v^{1-p'}\right)(x)\leq CM_{\Phi_1^\varepsilon,v^{1-q'}}(u^*)(x)\,v^{-p'}(x)\Psi(v(x)) 
	\end{equation}
	holds for almost every $x$.
	 
	Consequently, we have that 
	\begin{align*}
	I_1&\leq \frac{C}{t^{p'}}\int_{\mathbb{R}^n}(gv)^{p'}\left(M_{\Phi_1^\varepsilon, v^{1-q'}}u^*\right)v^{-p'}\Psi(v)\\
	&\leq \frac{C}{t}\int_{\mathbb{R}^n}g\left(M_{\Phi_1^\varepsilon, v^{1-q'}}u^*\right)\Psi(v)\\
	&=\frac{C}{t}\int_{\mathbb{R}^n\backslash \Omega}f\left(M_{\Phi_1^\varepsilon, v^{1-q'}}u\right)\Psi(v)+\frac{C}{t}\int_{\Omega}f_{Q_j}^v\left(M_{\Phi_1^\varepsilon, v^{1-q'}}u^*\right)\Psi(v).
	\end{align*}
	Let $u_j^*=u\mathcal{X}_{\mathbb{R}^n\backslash Q_j^*}$. By applying Lemma~\ref{lema: comparacion peso por caracteristica} we get \refstepcounter{BPR2}\label{pag: estimacion de I_1}
	\begin{align*} 
	\frac{C}{t}\int_{\Omega}f_{Q_j}^v\left(M_{\Phi_1^\varepsilon, v^{1-q'}}u^*\right)\Psi(v)&\leq\frac{C}{t}\sum_j\int_{Q_j}f_{Q_j}^v\left(M_{\Phi_1^\varepsilon, v^{1-q'}}u_j^*\right)\Psi(v)\\
	&\leq \frac{C}{t}\sum_j\inf_{Q_j} M_{\Phi_1^\varepsilon, v^{1-q'}}u_j^*\frac{(\Psi\circ v)(Q_j)}{v(Q_j)}\int_{Q_j}fv\\
	&\leq \frac{C}{t}[v]_{\mathrm{RH}_\infty}\sum_j\inf_{Q_j} M_{\Phi_1^\varepsilon, v^{1-q'}}u_j^*\frac{(\Psi\circ v)(Q_j)}{|Q_j|}\int_{Q_j}f\\
	&\leq \frac{C}{t}[v]_{\mathrm{RH}_\infty}\sum_j\int_{Q_j}f\left(M_{\Phi_1^\varepsilon, v^{1-q'}}u\right)M(\Psi(v))\\
	&\leq \frac{C}{t}[v]_{\mathrm{RH}_\infty}\int_{\Omega}f\left(M_{\Phi_1^\varepsilon, v^{1-q'}}u\right)\,M(\Psi(v)).
	\end{align*}
	
	For $I_2$, we shall use  Lemma~\ref{lema: potencia negativa de RHinf en A1 y positivas en RHinf} and the fact that $v$ is doubling to obtain \refstepcounter{BPR2}\label{pag: estimacion de I_2}
	\begin{align*}
	uv(Q_j^*)&\leq v^{1-q'}(Q_j^*)\|u\|_{\Phi_1^\varepsilon,Q_j^*,v^{1-q'}}\left[\frac{1}{v^{1-q'}(Q_j^*)}\int_{Q_j^*}\Phi_1^\varepsilon\left(\frac{u}{\|u\|_{\Phi_1^\varepsilon,Q_j^*,v^{1-q'}}}\right)v^{1-q'}\right]\left(\sup_{Q_j^*} v^{q'}\right)\\
	&\leq \left[v^{q'}\right]_{\rm{RH}_\infty}\frac{v^{1-q'}(Q_j^*)}{|Q_j^*|}v^{q'}(Q_j^*)\|u\|_{\Phi_1^\varepsilon,Q_j^*,v^{1-q'}}\\
	&\leq C\left[v^{q'}\right]_{\rm{RH}_\infty}\left[v^{1-q'}\right]_{A_1}v(Q_j)\|u\|_{\Phi_1^\varepsilon,Q_j^*,v^{1-q'}}\\
	&\leq \frac{C}{t}\int_{Q_j}fv\left(M_{\Phi_1^\varepsilon,v^{1-q'}}u\right)\\
	&\leq \frac{C}{t}\int_{Q_j}f\left(M_{\Phi_1^\varepsilon,v^{1-q'}}u\right)M(\Psi(v)),
	\end{align*}
	since we have $\Psi(s)\geq s$.
	Consequently,
	\begin{align*}
	uv(\Omega^*)&\leq \sum_j uv(Q_j^*)\\
	&\leq \frac{C}{t}\sum_j \int_{Q_j}f\left(M_{\Phi_1^\varepsilon,v^{1-q'}}u\right)M(\Psi(v))\\
	& \leq\frac{C}{t} \int_{\mathbb{R}^n}f\left(M_{\Phi_1^\varepsilon,v^{1-q'}}u\right)M(\Psi(v)).
	\end{align*}
		
	It only remains to estimate $I_3$. We have that
	\[T_b(hv)(x)=\sum_j T_b(h_jv)(x)=\sum_j (b(x)-b_{Q_j})T(h_jv)(x)-\sum_jT((b-b_{Q_j})h_jv),\]
	and therefore
	\begin{align*}
	I_3&\leq uv\left(\left\{x\in \mathbb{R}^n\backslash \Omega^*: \left|\sum_j\frac{(b-b_{Q_j})T(h_jv)}{v}\right|>\frac{t}{4}\right\}\right)\\
	&\qquad +uv\left(\left\{x\in \mathbb{R}^n\backslash \Omega^*: \left|\sum_j\frac{T((b-b_{Q_j})h_jv)}{v}\right|>\frac{t}{4}\right\}\right)\\
	&=I_3^1+I_3^2.
	\end{align*}
	
	Let us also denote $A_{j,k}=\{x: 2^{k-1}r_j<|x-x_{Q_j}|\leq 2^{k}r_j\}$, where $r_j=2\sqrt{n}\ell(Q_j)$. By using the integral representation of $T$ given by \eqref{eq: representacion integral de T}, \eqref{eq: hj integra cero contra v} and the smoothness condition on the kernel \eqref{eq:prop del nucleo} on $K$, for $I_3^1$  we get
	\begin{align*}
	I_3^1&\leq\frac{C}{t} \sum_j\int_{\mathbb{R}^n\backslash\Omega^*}|b(x)-b_{Q_j}|T(h_jv)(x)u(x)\,dx\\
	&\leq\frac{C}{t}\sum_j\int_{\mathbb{R}^n\backslash Q_j^*}|b(x)-b_{Q_j}|\left|\int_{Q_j}h_j(y)v(y)(K(x-y)-K(x-x_{Q_j}))\,dy\right|u(x)\,dx\\
	&\leq  \frac{C}{t}\sum_j\int_{Q_j}|h_j(y)|v(y)\int_{\mathbb{R}^n\backslash Q_j^*}|b(x)-b_{Q_j}||K(x-y)-K(x-x_{Q_j})|u_j^*(x)\,dx\,dy\\
&\leq \frac{C}{t}\sum_j\int_{Q_j}|h_j(y)|v(y)\sum_{k=1}^{\infty}{\int_{A_{j,k}}}|b(x)-b_{Q_j}||K(x-y)-K(x-x_{Q_j})|u_j^*(x)\,dx\,dy\\
&= \frac{C}{t}\sum_j\int_{Q_j}|h_j(y)|v(y)\sum_{k=1}^{\infty}{\int_{A_{j,k}}}|b(x)-b_{Q_j}|\frac{|y-x_{Q_j}|}{|x-x_{Q_j}|^{n+1}}u_j^*(x)\,dx\,dy.
	\end{align*}

Observe that there exists a unique $k_0\in\mathbb{N}$ such that $2^{k_0-1}\leq \sqrt{n}<2^{k_0}$. Given $y\in Q_j$ fixed, we get
\begin{align*}
\sum_{k=1}^{\infty}{\int_{A_{j,k}}}|b(x)-b_{Q_j}|\frac{|y-x_{Q_j}|}{|x-x_{Q_j}|^{n+1}}u_j^*(x)\,dx&\leq C\sum_{k=1}^{\infty}\frac{\ell(Q_j)}{2^kr_j}\frac{1}{(2^kr_j)^n}\int_{B\left(x_{Q_j},2^kr_j\right)}|b(x)-b_{Q_j}|u_j^*(x)\,dx\\
&\leq C\sum_{k=1}^{\infty}2^{-k}\frac{1}{|2^{k+k_0+2}Q_j|}\int_{2^{k+k_0+2}Q_j}|b(x)-b_{Q_j}|u_j^*(x)\,dx.
\end{align*}
By applying Lemma~\ref{lema: diferencia de promedios en cubos dilatados acotados por norma BMO}, the generalized Hölder inequality with $\Phi_1$ and $\tilde\Phi_1(\lambda) \approx (e^{\lambda}-1)\mathcal{X}_{(1,\infty)}(\lambda)$, and Lemma~\ref{lema: norma exponencial de la oscilacion media acotada por la norma BMO} we arrive to 
\begin{align*}
\frac{1}{|2^{k+k_0+2}Q_j|}\int_{2^{k+k_0+2}Q_j}|b(x)-b_{Q_j}|u_j^*(x)\,dx&\leq \frac{1}{|2^{k+k_0+2}Q_j|}\int_{2^{k+k_0+2}Q_j}|b(x)-b_{2^{k+k_0+2}Q_j}|u_j^*(x)\,dx\\
&\qquad +C(k+k_0+2)Mu_j^*(y)\\
&\leq C\|b-b_{2^{k+k_0+2}Q_j}\|_{\tilde\Phi_1,2^{k+k_0+2}Q_j}\|u_j^*\|_{\Phi_1,2^{k+k_0+2}Q_j}\\
&\qquad +C(k+k_0+2)Mu_j^*(y)\\
&\leq C(k+k_0+2)M_{\Phi_1^\varepsilon}u_j^*(y)
\end{align*}
This allows us to conclude, by Lemma~\ref{lema: comparacion peso por caracteristica}, that \refstepcounter{BPR2}\label{pag: estimacion de I_3^1}
\begin{align*}
  I_3^1&\leq \frac{C}{t}\sum_j\int_{Q_j}|h_j(y)|M_{\Phi_1^\varepsilon} u_j^*(y) v(y)\,dy\\
  &\leq \frac{C}{t}\sum_j\int_{Q_j}fv\left(\inf_{Q_j}M_{\Phi_1^\varepsilon} u_j^*\right)+C\sum_j\int_{Q_j}f_{Q_j}^vv\left(\inf_{Q_j}M_{\Phi_1^\varepsilon} u_j^*\right),
\end{align*}
and then the desired estimate follows since $M_{\Phi_1^\varepsilon} u_j^*\lesssim M_{\Phi_1^\varepsilon,v^{1-q'}} u$, $v\lesssim M(\Psi(v))$ and $\Phi_1(z)\gtrsim z$.

Finally, we apply Theorem~\ref{teo: mixta tipo F-S para T} in order to estimate $I_3^2$. This yields
\begin{align*}
    I_3^2&=u^*v\left(\left\{x\in \mathbb{R}^n: \left|\sum_j\frac{T((b-b_{Q_j})h_jv)}{v}\right|>\frac{t}{4}\right\}\right)\\
    &=u^*v\left(\left\{x\in \mathbb{R}^n: \left|\frac{T(\sum_j(b-b_{Q_j})h_jv)}{v}\right|>\frac{t}{4}\right\}\right)\\
    &\leq \frac{C}{t}\int_{\mathbb{R}^n}\sum_j|(b-b_{Q_j})h_j|M_{\Phi_1^\varepsilon,v^{1-q'}}u^*M(\Psi(v))\\
    &=\frac{C}{t}\sum_j\int_{Q_j}|(b-b_{Q_j})h_j|M_{\Phi_1^\varepsilon,v^{1-q'}}u_j^*M(\Psi(v)).
\end{align*}

 By applying Lemma~\ref{lema: M(Psi(v)) esencialmente constante sobre cubos} on each $Q_j$, we get
\[I_3^2\lesssim \frac{1}{t}\sum_j \left(\inf_{Q_j}M(\Psi(v))\right)\left(\inf_{Q_j}M_{\varphi_1^\varepsilon,v^{1-q'}}u_j^*\right)\int_{Q_j}|(b-b_{Q_j})h_j|,\]
where we have also used Lemma~\ref{lema: comparacion peso por caracteristica}.

By the generalized Hölder inequality \eqref{eq: desigualdad de Holder generalizada} together with \eqref{eq: equivalencia norma Luxemburgo con infimo} we obtain that \refstepcounter{BPR2}\label{pag: estimacion de I_3^2}
\begin{align*}
    \frac{1}{t}\int_{Q_j}|(b-b_{Q_j})h_j|&\leq \frac{1}{t}\int_{Q_j}|b-b_{Q_j}|f+\frac{1}{t}\int_{Q_j}|b-b_{Q_j}|f_{Q_j}^v\\
    &\leq \frac{1}{t}\int_{Q_j}|b-b_{Q_j}|f+\frac{|Q_j|}{v(Q_j)}\int_{Q_j}\frac{f}{t}v\\
    &\leq C\frac{|Q_j|}{t}\|b-b_{Q_j}\|_{\tilde\Phi_1, Q_j}\|f\|_{\Phi_1,Q_j}+[v]_{\text{RH}_\infty}\int_{Q_j}\frac{f}{t}\\
    &\leq C\frac{|Q_j|}{t}\left(t+\frac{t}{|Q_j|}\int_{Q_j}\Phi_1\left(\frac{f}{t}\right)\right)+[v]_{\text{RH}_\infty}\int_{Q_j}\frac{f}{t}\\
    &\leq C|Q_j|+C\int_{Q_j}\Phi_1\left(\frac{f}{t}\right)\\
    &\leq C\frac{|Q_j|}{v(Q_j)}\int_{Q_j}\frac{f}{t}v+C\int_{Q_j}\Phi_1\left(\frac{f}{t}\right)\\
    &\leq C\int_{Q_j}\Phi_1\left(\frac{f}{t}\right),
\end{align*}
since $v\in \text{RH}_\infty$ and $z\leq \Phi_1(z)$.

By plugging this estimate on the corresponding bound for $I_3^2$ we get 
\[I_3^2\leq C\sum_j \int_{Q_j}\Phi_1\left(\frac{f}{t}\right)M_{\varphi_1^\varepsilon,v^{1-q'}}u\,M(\Psi(v)),\]
which yields the desired estimate. This completes the proof for this case provided $u$ is a bounded function. The estimate for arbitrary $u$ can be performed by using a classic approximation argument. 
\end{proof}

\medskip

\begin{proof}[Proof of Theorem~\ref{teo: mixta tipo F-S para T_b^m}, general case]
We now fix $m$ and assume that the estimate holds for $T_b^k$, for every $1\leq k\leq m-1$. We shall prove that it also holds for $T_b^m$. It will be enough to assume, again, that $u$ is bounded, $f$ is nonnegative and $\|b\|_{\text{BMO}}=1$. We fix $t>0$ and perform the Calderón-Zygmund decomposition of $f$ at level $t$ with respect to $v$, obtaining a collection $\{Q_j\}_{j=1}^\infty$ with the same properties as in the proof above. We also define $\Omega$, $\Omega^*$, $g$ and $h$ as above. Therefore,
\begin{align*}
	uv\left(\left\{x\in \mathbb{R}^n: \left|\frac{T_b^m(fv)}{v}\right|>t\right\}\right)&\leq uv\left(\left\{x\in \mathbb{R}^n\backslash \Omega^*: \left|\frac{T_b^m(gv)}{v}\right|>\frac{t}{2}\right\}\right) + uv(\Omega^*)\\
	&+uv\left(\left\{x\in \mathbb{R}^n\backslash \Omega^*: \left|\frac{T_b^m(hv)}{v}\right|>\frac{t}{2}\right\}\right)\\
	&= I_1^m+I_2^m+I_3^m.
	\end{align*}
	 Fix $\varepsilon>0$, $p>\max\{q,1+(m+1)/\varepsilon\}$ and define $u^*$ and $u_j^*$ as in the proof of the case $m=1$.

In order to deal with $I_1^m$, we combine Tchebycheff inequality with Theorem~\ref{teo: F-S tipo fuerte para conmutador de orden m}, applied with $p'$ and $(m+1)(1-p')+\varepsilon>0$ (from our choice of $p$) and $w=uv^{1-p'}$. We arrive to
\[I_1^m
	\leq\frac{C}{t^{p'}}\int_{\mathbb{R}^n} |T_b^m(gv)|^{p'}u^*v^{1-p'}
	\leq\frac{C}{t^{p'}}\int_{\mathbb{R}^n} (gv)^{p'}M_{\varphi_m^\varepsilon}\left(u^*v^{1-p'}\right).\]
	We can continue by applying \eqref{eq: teo: mixta tipo F-S para T_b^m - eq1} and proceeding as we did in page~\pageref{pag: estimacion de I_1} to get the desired bound for $I_1^m$.
	
	The estimate of $I_2^m$ does not depend on the operator involved so it is the same as the given in page~\ref{pag: estimacion de I_2}. It only remains to estimate $I_3^m$. By applying Lemma~\ref{lema: T_b^m en terminos de conmutadores de orden menor} we can deduce that
	\[T_b^m(hv)(x)=\sum_j(b-b_{Q_j})^mT(h_jv)(x)-\sum_jT((b-b_{Q_j})^mh_jv)(x)-\sum_j\sum_{i=1}^{m-1}C_{m,i}T_b^i((b-b_{Q_j})^{m-i}h_jv)(x)\]
and consequently
\begin{align*}
I_3^m&\leq uv\left(\left\{x\in\mathbb{R}^n\backslash\Omega^*: \left|\frac{\sum_j(b-b_{Q_j})^mT(h_jv)(x)}{v(x)}\right|>\frac{t}{6}\right\}\right)\\
&+uv\left(\left\{x\in\mathbb{R}^n\backslash\Omega^*: \left|\frac{\sum_jT((b-b_{Q_j})^mh_jv)(x)}{v(x)}\right|>\frac{t}{6}\right\}\right)\\
&+uv\left(\left\{x\in\mathbb{R}^n\backslash\Omega^*: \left|\sum_{i=1}^{m-1}\frac{T_b^i(\sum_j(b-b_{Q_j})^{m-i}h_jv)(x)}{v(x)}\right|>\frac{t}{6C}\right\}\right)\\
&=I_3^{m,1}+I_3^{m,2}+I_3^{m,3},
\end{align*}
where $C=\max\{C_{m,i}\}_{i=1}^{m-1}$.

We proceed with the estimate of each term above. Let $A_{j,k}$ and $k_0$ be as in the proof for the case $m=1$. For $I_3^{m,1}$ we apply Tchebycheff inequality in order to get
\begin{align*}
I_3^{m,1}&\leq\frac{6}{t}\int_{\mathbb{R}^n\backslash\Omega^*}\left|\sum_j{(b(x)-b_{Q_j})^mT(h_jv)(x)}\right|u^*(x)\,dx\\
&\leq\frac{C}{t}\sum_j{\int_{\mathbb{R}^n\backslash Q_j^*}|b(x)-b_{Q_j}|^m\left|\int_{Q_j}\hspace*{-0.2cm}\left(K(x-y)-K(x-x_{Q_j})\right)h_j(y)v(y)\,dy\right|u_j^*(x)\,dx}\\
&\leq\frac{C}{t}\sum_j{\int_{Q_j}|h_j(y)|v(y)\int_{\mathbb{R}^n\backslash Q_j^*}|b(x)-b_{Q_j}|^m|K(x-y)-K(x-x_{Q_j})|u_j^*(x)\,dx\,dy}\\
&\leq\frac{C}{t}\sum_j{\int_{Q_j}|h_j(y)|v(y)\sum_{k=1}^{\infty}{\int_{A_{j,k}}|b(x)-b_{Q_j}|^m\frac{|y-x_{Q_j}|}{|x-x_{Q_j}|^{n+1}}u_j^*(x)\,dx\,dy}}\\
&\leq\frac{C}{t}\sum_j\int_{Q_j}|h_j(y)|v(y)\sum_{k=1}^{\infty}\frac{\ell(Q_j)}{2^kr_j}\frac{1}{(2^kr_j)^n}\int_{B\left(x_{Q_j},2^kr_j\right)}|b(x)-b_{Q_j}|^mu_j^*(x)\,dx\\
&\leq \frac{C}{t}\sum_j\int_{Q_j}|h_j(y)|v(y)\sum_{k=1}^{\infty}2^{-k}\frac{1}{|2^{k+k_0+2}Q_j|}\int_{2^{k+k_0+2}Q_j}|b(x)-b_{Q_j}|^mu_j^*(x)\,dx.
\end{align*}

Notice that $\tilde\Phi_m(\lambda)\approx (e^{\lambda^{1/m}}-1)\mathcal{X}_{(1,\infty)}(\lambda)$. By a change of variable we can easily obtain that
\begin{equation}\label{eq: norma exp L y exp L^{1/m}}
\|g_0^m\|_{\tilde\Phi_m, Q}\approx \|g_0\|_{\tilde\Phi_1, Q}^m.
\end{equation}

In order to estimate the inner sum, we apply Lemma~\ref{lema: diferencia de promedios en cubos dilatados acotados por norma BMO}, the generalized Hölder inequality with $\Phi_m$ and $\tilde\Phi_m$ combined with the expression above to conclude
\begin{align*}
\frac{1}{|2^{k+k_0+2}Q_j|}\int_{2^{k+k_0+2}Q_j}|b(x)-b_{Q_j}|^mu_j^*(x)\,dx&\leq \frac{2^m}{|2^{k+k_0+2}Q_j|}\int_{2^{k+k_0+2}Q_j}|b(x)-b_{2^{k+k_0+2}Q_j}|^mu_j^*(x)\,dx\\
&\qquad +C(k+k_0+2)^m Mu_j^*(y)\\
&\leq C\||b-b_{2^{k+k_0+2}Q_j}|^m\|_{\tilde\Phi_{m},2^{k+k_0+2}Q_j}\|u_j^*\|_{\Phi_m,2^{k+k_0+2}Q_j}\\
&\qquad +C(k+k_0+2)^m Mu_j^*(y)\\
&\leq C\|b-b_{2^{k+k_0+2}Q_j}\|_{\tilde\Phi_{1},2^{k+k_0+2}Q_j}^m\|u_j^*\|_{\Phi_m,2^{k+k_0+2}Q_j}\\
&\qquad +C(k+k_0+2)^mMu_j^*(y)\\
&\leq C(k+k_0+2)^mM_{\Phi_m}u_j^*(y).
\end{align*}
This implies that
\[I_3^{m,1}\leq \frac{C}{t}\sum_j\int_{Q_j}|h_j(y)|v(y)M_{\Phi_m^\varepsilon}u_j^*(y)\,dy.\]
From this estimate we can obtain the desired bound for $I_3^{m,1}$ by following the same steps as in page~\pageref{pag: estimacion de I_3^1}.

To deal with $I_3^{m,2}$, we write
\[I_3^{m,2}=u^*v\left(\left\{x\in\mathbb{R}^n: \left|\frac{\sum_jT((b-b_{Q_j})^mh_jv)(x)}{v(x)}\right|>\frac{t}{6}\right\}\right)\]
and apply Theorem~\ref{teo: mixta tipo F-S para T}. We get 
\begin{align*}
I_3^{m,2}&\leq\frac{C}{t}\int_{\mathbb{R}^n}\left|\sum_j{(b(x)-b_{Q_j})^mh_j(x)}\right|M_{\Phi_m^\varepsilon, v^{1-q'}}u^*(x)M(\Psi(v))(x)\,dx\\
&\leq\frac{C}{t}\sum_j{\int_{Q_j}|b(x)-b_{Q_j}|^m|h_j(x)|M_{\Phi_m^\varepsilon, v^{1-q'}}u_j^*(x)M(\Psi(v))(x)\,dx}\\
&\leq\frac{C}{t}\sum_j{\int_{Q_j}|b(x)-b_{Q_j}|^mf(x)M_{\Phi_m^\varepsilon, v^{1-q'}}u_j^*(x)M(\Psi(v))(x)\,dx}\\
&\qquad+\frac{C}{t}\sum_j{\int_{Q_j}|b(x)-b_{Q_j}|^mf_{Q_j}^v M_{\Phi_m^\varepsilon, v^{1-q'}}u_j^*(x)M(\Psi(v))(x)\,dx}\\
&=(\text{A})+(\text{B}).
\end{align*}
By applying generalized Hölder inequality with $\Phi_m$ and $\tilde \Phi_m$ together with \eqref{eq: norma exp L y exp L^{1/m}} and \eqref{eq: equivalencia norma Luxemburgo con infimo} we get \refstepcounter{BPR2}\label{pag: estimacion previa a (A)}
\begin{align*}
\frac{1}{t}\int_{Q_j}|b(x)-b_{Q_j}|^mf(x)\,dx&\lesssim \frac{|Q_j|}{t}\|(b-b_{Q_j})^m\|_{\tilde\Phi_m, Q_j}\|f\|_{\Phi_m, Q_j}\\
&\leq \frac{|Q_j|}{t}\|b-b_{Q_j}\|_{\tilde \Phi_1, Q_j}^m\left(t+\frac{t}{|Q_j|}\int_{Q_j}\Phi_m\left(\frac{f(x)}{t}\right)\,dx\right)\\
&\leq \frac{|Q_j|}{v(Q_j)}\int_{Q_j}\frac{f}{t}v+\int_{Q_j}\Phi_m\left(\frac{f(x)}{t}\right)\\
&\leq C[v]_{\text{RH}_\infty}\int_{Q_j}\Phi_m\left(\frac{f(x)}{t}\right).
\end{align*}
Therefore, by combining Lemma~\ref{lema: comparacion peso por caracteristica} with Lemma~\ref{lema: M(Psi(v)) esencialmente constante sobre cubos}, we arrive to
\begin{align*}
(\text{A})&\leq \frac{C[v]_{\text{RH}_\infty}}{t}\sum_j \left(\inf_{Q_j} M_{\Phi_m^\varepsilon, v^{1-q'}}u_j^*\right)\left(\inf_{Q_j} M(\Psi(v))\right)\int_{Q_j}\Phi_m\left(\frac{f(x)}{t}\right)\,dx\\
&\leq C\sum_j\int_{Q_j}\Phi_m\left(\frac{f(x)}{t}\right)M_{\Phi_m^\varepsilon, v^{1-q'}}u_j^*(x)M(\Psi(v))(x)\,dx\\
&\leq C\int_{\mathbb{R}^n}\Phi_m\left(\frac{f(x)}{t}\right)M_{\Phi_m^\varepsilon, v^{1-q'}}u(x)M(\Psi(v))(x)\,dx.
\end{align*}
For $(\text{B})$, we proceed similarly as for $(\text{A})$, after noticing that
\[\frac{f_{Q_j}^v}{t}\int_{Q_j}|b(x)-b_{Q_j}|^m\lesssim \frac{|Q_j|}{v(Q_j)}\int_{Q_j}\frac{f}{t}v\leq [v]_{\text{RH}_\infty}\int_{Q_j}\frac{f}{t}\leq [v]_{\text{RH}_\infty}\int_{Q_j}\Phi_m\left(\frac{f}{t}\right).\]
It only remains to estimate $I_3^{m,3}$. By applying the inductive hypothesis for the lower order commutators we get
\begin{align*}
I_3^{m,3}&\leq \sum_{i=1}^{m-1}u^*v\left(\left\{x\in\mathbb{R}^n: \left|\frac{T_b^i(\sum_j(b-b_{Q_j})^{m-i}h_jv)(x)}{v(x)}\right|>\frac{t}{6C}\right\}\right)\\
&\lesssim \sum_{i=1}^{m-1} \int_{\mathbb{R}^n}\Phi_i\left(\frac{\sum_j |(b(x)-b_{Q_j})^{m-i}h_j(x)|}{t}\right)M_{\Phi_i^\varepsilon, v^{1-q'}}u^*(x)M(\Psi(v))(x)\,dx\\
&\leq  \sum_{i=1}^m\sum_j \int_{Q_j}\Phi_i\left(\frac{|(b(x)-b_{Q_j})^{m-i}h_j(x)|}{t}\right)M_{\Phi_i^\varepsilon, v^{1-q'}}u_j^*(x)M(\Psi(v))(x)\,dx\\
&\lesssim \sum_{i=1}^m\sum_j \left(\inf_{Q_j}M_{\Phi_i^\varepsilon, v^{1-q'}}u_j^*\right)\left(\inf_{Q_j}M(\Psi(v))\right)\int_{Q_j}\Phi_i\left(\frac{|(b(x)-b_{Q_j})^{m-i}h_j(x)|}{t}\right)\,dx,
\end{align*}
by virtue of Lemma~\ref{lema: comparacion peso por caracteristica} and Lemma~\ref{lema: M(Psi(v)) esencialmente constante sobre cubos}. Since\refstepcounter{BPR2}\label{pag: estimacion de I_3^{m,3}}
\[\Phi^{-1}_i(\lambda)\approx \frac{\lambda}{(1+\log^+\lambda)^i}\]
for $1\leq i\leq m$, if we denote $\psi_i(\lambda)=e^{\lambda^{1/i}}-1$, we get
\[\Phi_m^{-1}(\lambda)\psi_{m-i}^{-1}(\lambda)\approx \frac{\lambda}{(1+\log^+\lambda)^m}\log(1+\lambda)^{m-i}\approx \frac{\lambda}{(1+\log^+\lambda)^i}\approx \Phi_i^{-1}(\lambda),\]
and 
consequently,
\begin{align*}
\int_{Q_j}\Phi_i\left(\frac{|(b(x)-b_{Q_j})^{m-i}h_j(x)|}{t}\right)\,dx&\leq \int_{Q_j}\Phi_m\left(\frac{|h_j(x)|}{t}\right)\,dx+\int_{Q_j}\psi_{m-i}\left(|b(x)-b_{Q_j}|^{m-i}\right)\,dx.
\end{align*}
Observe that, since $\Phi_m$ is a convex function
\begin{align*}
\int_{Q_j}\Phi_m\left(\frac{|h_j(x)|}{t}\right)\,dx&\lesssim \int_{Q_j}\Phi_m\left(\frac{f(x)}{t}\right)\,dx+\int_{Q_j}\Phi_m\left(\frac{f_{Q_j}^v}{t}\right)\,dx\\
&\lesssim \int_{Q_j}\Phi_m\left(\frac{f(x)}{t}\right)\,dx+|Q_j|\\
&\lesssim [v]_{\mathrm{RH}_\infty}\int_{Q_j}\Phi_m\left(\frac{f(x)}{t}\right)\,dx.
\end{align*}
On the other hand, if $\psi(\lambda)=e^{\lambda}-1$, by virtue of Lemma~\ref{lema: norma exponencial de la oscilacion media acotada por la norma BMO} we have that
\begin{align*}
\int_{Q_j}\psi_{m-i}\left(|b(x)-b_{Q_j}|^{m-i}\right)\,dx&= \int_{Q_j}\psi\left(|b(x)-b_{Q_j}|\right)\,dx\\
&\leq \int_{Q_j}\psi\left(\frac{|b(x)-b_{Q_j}|}{\|b-b_{Q_j}\|_{\psi, Q_j}}\right)\,dx\\
&\leq |Q_j|\\
&\lesssim [v]_{\mathrm{RH}_\infty}\int_{Q_j}\frac{f(x)}{t}\,dx.
\end{align*}
These estimates allow us to get the desired bound for $I_3^{m,3}$. The proof is complete.
\end{proof}

\section{Proof of Theorem~\ref{teo: mixta tipo F-S para T_b^m Hormander}}\label{seccion: F-S para Hormander}
We conclude the article with the proof of the mixed inequality of Fefferman-Stein type for operators associated to kernels with less regularity. The following theorem, proved in \cite{LRdlT}, establishes a Coifman type estimate for commutators of these classes of operators. We shall require this estimate for the main proof in this section.

\begin{teo}\label{teo: tipo Coifman para Tb^m hormander}
Let $m\in\mathbb{N}$. Let $\xi$ and $\zeta$ two Young functions such that $\tilde\xi^{-1}(t)\zeta^{-1}(t)(\log t)^m \lesssim t$, for every $t\geq t_0\geq e$. If $T$ is an operator as in \eqref{eq: representacion integral de T} with an associated kernel $K\in H_{\zeta}\cap H_{\xi,m}$ and $b\in \text{BMO}$, then for every $0<p<\infty$ and $w\in A_\infty$ the inequality
\[\int_{\mathbb{R}^n} |T_b^mf(x)|^pw(x)\,dx\leq C\|b\|_{\text{BMO}}^{mp}\int_{\mathbb{R}^n}M_{\tilde \xi}f(x)^pw(x)\,dx\]
holds for every bounded function $f$ with compact support, provided the left-hand side is finite.
\end{teo}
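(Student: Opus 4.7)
The plan is to follow the classical Pérez strategy: reduce the Coifman-type inequality to a pointwise sharp maximal function estimate and then iterate via the Fefferman--Stein inequality for $A_\infty$ weights, proceeding by induction on $m$. For the base case $m=0$, the estimate $\int |Tf|^p w \lesssim \int (M_{\tilde\xi}f)^p w$ for $w\in A_\infty$ follows from the pointwise bound $M^\#_\delta(Tf)(x) \lesssim M_{\tilde\xi}f(x)$, whose proof uses the $H_\zeta$-condition on $K$ together with the generalized Hölder inequality (when $m=0$ the constraint $\tilde\xi^{-1}(t)\zeta^{-1}(t)\lesssim t$ suffices).

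For the inductive step, assuming the statement is proved for $T_b^j$ with $0\le j<m$, the heart of the matter is to establish a Fefferman--Pérez-type pointwise estimate
\[
M^\#_\delta(T_b^m f)(x) \lesssim \|b\|_{\mathrm{BMO}}^m\, M_{\tilde\xi}f(x) + \sum_{j=1}^{m-1} \|b\|_{\mathrm{BMO}}^{m-j}\, M_\varepsilon(T_b^j f)(x)
\]
for some $0<\delta<\varepsilon<1$. To derive it, I would fix a cube $Q\ni x$, apply Lemma~\ref{lema: T_b^m en terminos de conmutadores de orden menor} with $\lambda=b_Q$, and split $f=f\chi_{2Q}+f\chi_{\mathbb{R}^n\setminus 2Q}$. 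The local parts involving $T((b-b_Q)^m f\chi_{2Q})$ are handled by Kolmogorov's inequality together with the mapping properties of $T$ and the generalized Hölder inequality with the exponential Young function $\psi(t)=e^{t^{1/m}}-1$, using Lemma~\ref{lema: norma exponencial de la oscilacion media acotada por la norma BMO} to control $\|(b-b_Q)^m\|_{\psi,Q}\lesssim\|b\|_{\mathrm{BMO}}^m$. The non-local part is where the hypothesis $K\in H_{\xi,m}$ enters decisively: the annular decomposition produces factors $|b_{2^{k+1}Q}-b_Q|^m \lesssim (k\|b\|_{\mathrm{BMO}})^m$, precisely matched by the weight $k^m$ in~\eqref{eq: condicion Hormander - m}, and a triple Hölder inequality with the relation $\tilde\xi^{-1}(t)\zeta^{-1}(t)(\log t)^m\lesssim t$ transforms the resulting integrand into the average $M_{\tilde\xi}f(x)$.

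Once the pointwise estimate is available, since $\int |T_b^m f|^p w<\infty$ by hypothesis and $w\in A_\infty$, the Fefferman--Stein inequality gives
\[
\int_{\mathbb{R}^n} |T_b^m f|^p w \lesssim \int_{\mathbb{R}^n} \bigl(M^\#_\delta(T_b^m f)\bigr)^p w.
\]
Substituting the pointwise bound and choosing $\varepsilon>0$ small enough (depending on the $A_\infty$ constant of $w$) so that $M_\varepsilon$ is bounded on $L^p(w)$, each lower-order term obeys
\[
\int_{\mathbb{R}^n} \bigl(M_\varepsilon(T_b^j f)\bigr)^p w \lesssim \int_{\mathbb{R}^n} |T_b^j f|^p w \lesssim \|b\|_{\mathrm{BMO}}^{jp} \int_{\mathbb{R}^n}(M_{\tilde\xi}f)^p w
\]
by the inductive hypothesis, closing the induction.

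The main obstacle is the pointwise sharp function estimate: it requires a careful annular decomposition combined with iterated generalized Hölder inequalities with the triple $(\tilde\xi,\zeta,\psi)$ constrained exactly by $\tilde\xi^{-1}(t)\zeta^{-1}(t)(\log t)^m\lesssim t$, and it demands a delicate a priori finiteness argument (truncate $T$, bound the truncated commutator, then pass to the limit) to justify applying Fefferman--Stein. A secondary technical point is the absorption of lower-order terms via Pérez's $M_\varepsilon$-trick, which forces the choice of $\varepsilon$ to depend on $w$ but not on $f$.
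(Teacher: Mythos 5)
You should first note that the paper does not actually prove this statement: Theorem~\ref{teo: tipo Coifman para Tb^m hormander} is imported verbatim from \cite{LRdlT}, where it is established precisely by the strategy you describe — a pointwise estimate for $M^\#_\delta(T_b^m f)$ in terms of $M_{\tilde\xi}f$ and $M_\varepsilon(T_b^j f)$ for lower order commutators, followed by the Fefferman--Stein inequality for $A_\infty$ weights and induction on $m$, in the spirit of \cite{Perez_Sharp97} and \cite{Lorente-Riveros-delaTorre05}. So your sketch reconstructs essentially the proof of the cited source rather than an alternative route: the local term is handled by Kolmogorov plus the weak $(1,1)$ boundedness of $T$ (available because $H_\zeta\subset H_1$) and the exponential H\"older estimate of Lemma~\ref{lema: norma exponencial de la oscilacion media acotada por la norma BMO}, while in the non-local term the oscillations $|b-b_{2^kQ}|^m$ are paired with $K\in H_\zeta$ through the three-function H\"older inequality encoded in $\tilde\xi^{-1}(t)\zeta^{-1}(t)(\log t)^m\lesssim t$ (with $\psi_m(t)=e^{t^{1/m}}-1$, $\psi_m^{-1}(t)\approx(\log t)^m$), and the averaged differences $|b_Q-b_{2^kQ}|^m\lesssim (k\|b\|_{\mathrm{BMO}})^m$ are absorbed by the weight $k^m$ in the $H_{\xi,m}$ condition; this is exactly the mechanism in \cite{LRdlT}, and it is also the splitting the present paper reuses in Section~\ref{seccion: F-S para Hormander} for the terms $F_{j,m}^1$ and $F_{j,m}^2$.

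One small correction: the sharp-function inequality should carry the full sum $\sum_{j=0}^{m-1}\|b\|_{\mathrm{BMO}}^{m-j}M_\varepsilon(T_b^jf)$, since the term $(b-b_Q)^mTf$ produced by Lemma~\ref{lema: T_b^m en terminos de conmutadores de orden menor} contributes $\|b\|_{\mathrm{BMO}}^m M_\varepsilon(Tf)$. As you wrote it (sum starting at $j=1$), the case $m=1$ would read $M^\#_\delta(T_bf)\lesssim\|b\|_{\mathrm{BMO}}M_{\tilde\xi}f$, which is not what the argument yields and is not needed; including the $j=0$ term is harmless for closing the induction because your base case $m=0$ is established separately. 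With that adjustment, and the a priori finiteness justification you already flag, the outline is sound and coincides with the published proof.
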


As we did in the previous section, we shall proceed by induction on $m$, so we separate the cases corresponding to $m=1$ and $m>1$.

\begin{proof}[Proof of Theorem~\ref{teo: mixta tipo F-S para T_b^m Hormander}, case $m=1$]
As in the proof of Theorem~\ref{teo: mixta tipo F-S para T_b^m}, we assume that $u$ is bounded, $f$ is positive and $\|b\|_{\mathrm{BMO}}=1$.
We fix $t>0$ and perform the Calder\'on-Zygmund decomposition of $f$ at level $t$ with respect to $v$, obtaining the collection $\{Q_j\}_{j=1}^\infty$ satisfying $t<f_{Q_j}^v\leq Ct$. We also write $f=g+h$ as before, and define $Q_j^*=4c\sqrt{n}Q_j$, where $c$ is the constant appearing in \eqref{eq: condicion Hormander - m}. We split as before
	\begin{align*}
	uv\left(\left\{x\in \mathbb{R}^n: \left|\frac{T_b(fv)}{v}\right|>t\right\}\right)&\leq uv\left(\left\{x\in \mathbb{R}^n\backslash \Omega^*: \left|\frac{T_b(gv)}{v}\right|>\frac{t}{2}\right\}\right) + uv(\Omega^*)\\
	&+uv\left(\left\{x\in \mathbb{R}^n\backslash \Omega^*: \left|\frac{T_b(hv)}{v}\right|>\frac{t}{2}\right\}\right)\\
	&= I_1+I_2+I_3.
	\end{align*}
 For $I_1$, we apply Tchebycheff inequality with $p'>1$ in order to get
\[I_1\leq\frac{C}{t^{p'}}\int_{\mathbb{R}^n} |T_b(gv)|^{p'}uv^{1-p'}\mathcal{X}_{\mathbb{R}^n\backslash \Omega^*}
	\leq\frac{C}{t^{p'}}\int_{\mathbb{R}^n} |T_b(gv)|^{p'}M_s\left(u^*v^{1-p'}\right).\]
In order to apply Theorem~\ref{teo: tipo Coifman para Tb^m hormander}, we must show that $\|T_b(gv)\|_{L^{p'}(w)}<\infty$, where $w=M_s(u^*v^{1-p'})$ is an $A_1$ weight. Since the hypotheses imply that $K\in H_1$, we have that $T$ is bounded on $L^{p'}(\mathbb{R}^n)$ (see \cite{javi}). Therefore a classical argument allows to show that this quantity is finite (see pages 1415--1416 in \cite{LRdlT}). 
Consequently, we combine  Theorem~\ref{teo: tipo Coifman para Tb^m hormander} with the lower type $s$ and the upper type $r$ of $\tilde\xi$ to obtain
\begin{align*}
I_1&\lesssim\frac{1}{t^{p'}}\int_{\mathbb{R}^n} \left(M_{\tilde\xi}(gv)\right)^{p'}M_s\left(u^*v^{1-p'}\right)\\
&\lesssim\frac{1}{t^{p'}}\int_{\mathbb{R}^n} M_r(gv)^{p'}M_{s}\left(u^*v^{1-p'}\right)\\
 &\lesssim\frac{1}{t^{p'}}\int_{\mathbb{R}^n} (gv)^{p'}M_{\tilde\xi}\left(u^*v^{1-p'}\right).
\end{align*}
By applying Lemma~\ref{lema: estimacion de maximal del producto de pesos - 2}, we can conclude that
	\begin{equation}\label{eq: estimacion de maximal del producto de pesos - 2}
	M_{\tilde\xi}\left(u^*v^{1-p'}\right)(x)\lesssim M_{\varphi_p,v^{1-q'}}(u^*)(x)v^{-p'}(x)\Psi(v(x)),
	\end{equation}
	 for almost every $x$. This yields
 \[I_1\lesssim \frac{1}{t^{p'}}\int_{\mathbb{R}^n}(gv)^{p'}\left(M_{\varphi_p, v^{1-q'}}u^*\right)v^{-p'}\Psi(v)\]
 and we can deduce the desired estimate for $I_1$ from the inequality above by proceeding as in page~\pageref{pag: estimacion de I_1}.

 For $I_2$, we can proceed as in page~\pageref{pag: estimacion de I_2} since we have $\lambda\lesssim \varphi_p(\lambda)$.
	 
In order to deal with $I_3$, we write 
	\begin{align*}
	I_3&\leq uv\left(\left\{x\in \mathbb{R}^n\backslash \Omega^*: \left|\sum_j\frac{(b-b_{Q_j})T(h_jv)}{v}\right|>\frac{t}{4}\right\}\right)\\
	&\qquad +uv\left(\left\{x\in \mathbb{R}^n\backslash \Omega^*: \left|\sum_j\frac{T((b-b_{Q_j})h_jv)}{v}\right|>\frac{t}{4}\right\}\right)\\
	&=I_3^1+I_3^2.
	\end{align*}
	
	In this case we set $A_{j,k}=\{x: 2^{k-1}r_j<|x-x_{Q_j}|\leq 2^{k}r_j\}$, where $r_j=2c\sqrt{n}\ell(Q_j)$, being $c$ the constant appearing in \eqref{eq: condicion Hormander - m}. By using the integral representation of $T$ in \eqref{eq: representacion integral de T} and \eqref{eq: hj integra cero contra v} we obtain
 \begin{align*}
 I_3^1&\lesssim \frac{1}{t}\sum_j\int_{Q_j}|h_j(y)|v(y)\sum_{k=1}^{\infty}{\int_{A_{j,k}}}|b(x)-b_{Q_j}||K(x-y)-K(x-x_{Q_j})|u_j^*(x)\,dx\,dy\\
 &=\frac{1}{t}\sum_j\int_{Q_j}|h_j(y)|v(y)F_j(y)\,dy,
 \end{align*}
 where $u_j^*=u\mathcal{X}_{\mathbb{R}^n\backslash Q_j^*}$.
 
 Notice that there exists a unique $k_0\in\mathbb{N}$ such that $2^{k_0-1}\leq c\sqrt{n}<2^{k_0}$. We have that
 \begin{align*}
 F_j(y)&\leq \sum_{k=1}^\infty \int_{A_{j,k}}|b(x)-b_{2^{k+k_0+2}Q_j}||K(x-y)-K(x-x_{Q_j})|u_j^*(x)\,dx\\
 &\qquad + \sum_{k=1}^\infty \int_{A_{j,k}}|b_{Q_j}-b_{2^{k+k_0+2}Q_j}||K(x-y)-K(x-x_{Q_j})|u_j^*(x)\,dx\\
 &=F_j^1(y)+F_j^2(y).
 \end{align*}
 For $F_j^1$ we apply the generalized Hölder inequality \eqref{eq: Holder generalizada con promedios Luxemburgo} with the functions $\tilde\xi, \zeta$ and $\psi(\lambda)=e^{\lambda}-1$. Since $K\in H_\zeta$ and $B(x_{Q_j},2^kr_j)\subset 2^{k+k_0+2}Q_j$, by \eqref{eq: condicion Hormander} we get
\begin{align*}
F_j^1(y)&\leq \sum_{k=1}^\infty (2^kr_j)^n\|b-b_{2^{k+k_0+2}Q_j}\|_{\psi, 2^{k+k_0+2}Q_j}\|K(\cdot-(y-x_{Q_j}))-K(\cdot)\|_{\zeta, |x-x_{Q_j}|\sim 2^kr_j}\|u_j^*\|_{\tilde\xi, 2^{k+k_0+2}Q_j}\\
&\leq C_{\zeta} M_{\tilde\xi}\, u_j^*(y)\\
&\lesssim M_{\varphi_p, v^{1-q'}} u_j^*(y),
\end{align*}
where we have used again the fact that $\tilde \xi(\lambda)\lesssim \varphi_p(\lambda)$ for large $\lambda$.

For $F_j^2$, we use the generalized Hölder inequality \eqref{eq: desigualdad de Holder generalizada}, Lemma~\ref{lema: diferencia de promedios en cubos dilatados acotados por norma BMO} and the fact that $K\in H_{\xi,1}\cap H_{\xi}$ in order to get
\begin{align*}
F_j^2&\leq \sum_{k=1}^\infty (2^kr_j)^n(k+k_0+2)\|K(\cdot-(y-x_{Q_j}))-K(\cdot)\|_{\xi, |x-x_{Q_j}|\sim 2^kr_j}\|u_j^*\|_{\tilde\xi, 2^{k+k_0+2}Q_j}\\
&\lesssim C_{\xi} M_{\varphi_p,v^{1-q'}}u_j^*(y).
\end{align*}

These estimates allow us to conclude that
\[I_3^1\lesssim \frac{1}{t}\sum_j \int_{Q_j}|h_j(y)|v(y)M_{\varphi_p, v^{1-q'}}u_j^*(y)\,dy.\]
From this inequality we can continue by proceeding as in page \pageref{pag: estimacion de I_3^1}.
 
Finally, since the hypotheses imply those in Theorem~\ref{teo: F-S para T Hormander}, we can  estimate $I_3^2$ as follows
\begin{align*}
    I_3^2&=u^*v\left(\left\{x\in \mathbb{R}^n: \left|\sum_j\frac{T((b-b_{Q_j})h_jv)}{v}\right|>\frac{t}{4}\right\}\right)\\
    &=u^*v\left(\left\{x\in \mathbb{R}^n: \left|\frac{T(\sum_j(b-b_{Q_j})h_jv)}{v}\right|>\frac{t}{4}\right\}\right)\\
    &\leq \frac{C}{t}\int_{\mathbb{R}^n}\sum_j|(b-b_{Q_j})h_j|M_{\varphi_p,v^{1-q'}}u^*M(\Psi(v))\\
    &\leq\frac{C}{t}\sum_j\int_{Q_j}|(b-b_{Q_j})h_j|M_{\varphi_p,v^{1-q'}}u_j^*M(\Psi(v)).
\end{align*}
From the estimate above, we can repeat the steps in page~\pageref{pag: estimacion de I_3^2} in order to obtain the desired bound. This completes the proof for the case $m=1$.
\end{proof}

\medskip

\begin{proof}[Proof of Theorem~\ref{teo: mixta tipo F-S para T_b^m Hormander}, general case]
We assume that the estimate holds for $T_b^k$, for every $1\leq k\leq m-1$ and prove that it also holds for $T_b^m$.  We  perform the Calderón-Zygmund decomposition of $f$ at level $t>0$ fixed with respect to $v$, obtaining a collection $\{Q_j\}_{j=1}^\infty$ with the same properties as in the proof of the previous case. We also define $\Omega$, $\Omega^*$, $g$ and $h$ as before. Then
\begin{align*}
	uv\left(\left\{x\in \mathbb{R}^n: \left|\frac{T_b^m(fv)}{v}\right|>t\right\}\right)&\leq uv\left(\left\{x\in \mathbb{R}^n\backslash \Omega^*: \left|\frac{T_b^m(gv)}{v}\right|>\frac{t}{2}\right\}\right) + uv(\Omega^*)\\
	&+uv\left(\left\{x\in \mathbb{R}^n\backslash \Omega^*: \left|\frac{T_b^m(hv)}{v}\right|>\frac{t}{2}\right\}\right)\\
	&= I_1^m+I_2^m+I_3^m.
	\end{align*}
For $I_1^m$, we apply Tchebycheff inequality with $p'$ in order to get
\[I_1^m
	\leq\frac{C}{t^{p'}}\int_{\mathbb{R}^n} |T_b^m(gv)|^{p'}u^*v^{1-p'}\leq \frac{C}{t^{p'}}\int_{\mathbb{R}^n} |T_b^m(gv)|^{p'}M_s\left(u^*v^{1-p'}\right).\]
 Recall that $T$ is bounded on $L^2(\mathbb{R}^n)$, so we can repeat the corresponding argument given in the case $m=1$ to show that the right-hand side above is finite. Consequently, by applying Theorem~\ref{teo: tipo Coifman para Tb^m hormander} we obtain
\begin{align*}
I_1^m&\lesssim \frac{1}{t^{p'}}\int_{\mathbb{R}^n} \left(M_{\tilde\xi}(gv)\right)^{p'}M_s\left(u^*v^{1-p'}\right)\\
&\lesssim\frac{1}{t^{p'}}\int_{\mathbb{R}^n} M_r(gv)^{p'}M_{s}\left(u^*v^{1-p'}\right)\\
 &\lesssim\frac{1}{t^{p'}}\int_{\mathbb{R}^n} (gv)^{p'}M_{\tilde\xi}\left(u^*v^{1-p'}\right),
\end{align*}
where we have used both the lower and the upper type of $\tilde\xi$ and the fact that $M_r$ is bounded on $L^{p'/r}(w)$ with $w\in A_1$, since $p'>r$.
 
From this estimate, we can apply inequality \eqref{eq: estimacion de maximal del producto de pesos - 2} given by Lemma~\ref{lema: estimacion de maximal del producto de pesos - 2} and proceed in the same manner as we did before in page~\pageref{pag: estimacion de I_1} to get the desired bound for $I_1^m$.

The estimate of $I_2^m$ does not involve the commutator operator. Recall that our hypotheses imply that $v^{q'}$ belongs to $\mathrm{RH}_\infty$ and $v^{1-q'}\in A_1$. Therefore it can be achieved following the same steps as in page~\pageref{pag: estimacion de I_2}.

 It only remains to estimate $I_3^m$. By Lemma~\ref{lema: T_b^m en terminos de conmutadores de orden menor} we write
	\[T_b^m(hv)(x)=\sum_j(b-b_{Q_j})^mT(h_jv)(x)-\sum_jT((b-b_{Q_j})^mh_jv)(x)-\sum_j\sum_{i=1}^{m-1}C_{m,i}T_b^i((b-b_{Q_j})^{m-i}h_jv)(x),\]
 so we decompose
\begin{align*}
I_3^m&\leq uv\left(\left\{x\in\mathbb{R}^n\backslash\Omega^*: \left|\frac{\sum_j(b-b_{Q_j})^mT(h_jv)(x)}{v(x)}\right|>\frac{t}{6}\right\}\right)\\
&+uv\left(\left\{x\in\mathbb{R}^n\backslash\Omega^*: \left|\frac{\sum_jT((b-b_{Q_j})^mh_jv)(x)}{v(x)}\right|>\frac{t}{6}\right\}\right)\\
&+uv\left(\left\{x\in\mathbb{R}^n\backslash\Omega^*: \left|\sum_{i=1}^{m-1}\frac{T_b^i(\sum_j(b-b_{Q_j})^{m-i}h_jv)(x)}{v(x)}\right|>\frac{t}{6C}\right\}\right)\\
&=I_3^{m,1}+I_3^{m,2}+I_3^{m,3},
\end{align*}
where $C=\max\{C_{m,i}\}_{i=1}^{m-1}$.

We need to estimate every term above. By using the integral representation \eqref{eq: representacion integral de T} together with \eqref{eq: hj integra cero contra v} we get
\[I_3^{m,1}\lesssim\frac{1}{t}\sum_j{\int_{Q_j}|h_j(y)|v(y)\int_{\mathbb{R}^n\backslash Q_j^*}|b(x)-b_{Q_j}|^m|K(x-y)-K(x-x_{Q_j})|u_j^*(x)\,dx\,dy}.\]

Let $r_j$, $A_{j,k}$ and $k_0$ be as in the proof for the case $m=1$. We have that 
\begin{align*}
I_3^{m,1}
&\lesssim\frac{1}{t}\sum_j{\int_{Q_j}|h_j(y)|v(y)\sum_{k=1}^\infty\int_{A_{j,k}}|b(x)-b_{Q_j}|^m|K(x-y)-K(x-x_{Q_j})|u_j^*(x)\,dx\,dy}\\
&\lesssim\frac{1}{t}\sum_j{\int_{Q_j}|h_j(y)|v(y)\sum_{k=1}^\infty\int_{A_{j,k}}|b(x)-b_{2^{k+k_0+2}Q_j}|^m|K(x-y)-K(x-x_{Q_j})|u_j^*(x)\,dx\,dy}\\
&\qquad + \frac{1}{t}\sum_j{\int_{Q_j}|h_j(y)|v(y)\sum_{k=1}^\infty\int_{A_{j,k}}|b_{2^{k+k_0+2}Q_j}-b_{Q_j}|^m|K(x-y)-K(x-x_{Q_j})|u_j^*(x)\,dx\,dy}\\
&=\frac{1}{t}\sum_j \int_{Q_j}|h_j(y)|v(y)\left(F_{j,m}^1(y)+F_{j,m}^2(y)\right)\,dy.
\end{align*}
We estimate the terms $F_{j,m}^1$ and $F_{j,m}^2$ separately. Let $\psi_m(\lambda)=e^{\lambda^{1/m}}-1$. From the hypothesis $\tilde\xi^{-1}(\lambda)\zeta^{-1}(\lambda)(\log(\lambda))^m\lesssim \lambda$, we apply the generalized Hölder inequality with $\psi_m$, $\zeta$ and $\tilde\xi$ in order to obtain 
\begin{align*}
F_{j,m}^1(y)&\lesssim \sum_{k=1}^\infty (2^kr_j)^n\||b-b_{2^{k+k_0+2}Q_j}|^m\|_{\psi_m, 2^{k+k_0+2}Q_j}\|K(\cdot-(y-x_{Q_j}))-K(\cdot)\|_{\zeta, |x-x_{Q_j}|\sim 2^kr_j}\|u_j^*\|_{\tilde\xi, 2^{k+k_0+2}Q_j}\\
&\lesssim  M_{\tilde\xi}\,u_j^*(y)\sum_{k=1}^\infty (2^kr_j)^n\|b-b_{2^{k+k_0+2}Q_j}\|_{\psi_1, 2^{k+k_0+2}Q_j}\|K(\cdot-(y-x_{Q_j}))-K(\cdot)\|_{\zeta, |x-x_{Q_j}|\sim 2^kr_j}\\
&\leq C_{\zeta} M_{\tilde\xi}\,u_j^*(y),
\end{align*}
where we have used \eqref{eq: norma exp L y exp L^{1/m}}, Lemma~\ref{lema: norma exponencial de la oscilacion media acotada por la norma BMO} and condition \eqref{eq: condicion Hormander}, since $K\in H_{\zeta}$. 

For $F_{j,m}^2$ we apply Lemma~\ref{lema: diferencia de promedios en cubos dilatados acotados por norma BMO}, the generalized Hölder inequality with $\xi$ and $\tilde \xi$ and the fact that $K\in H_{\xi,m}$ to get
\begin{align*}
F_{j,m}^2(y)&\lesssim \sum_{k=1}^\infty (2^kr_j)^n(k+k_0+2)^m\|K(\cdot-(y-x_{Q_j}))-K(\cdot)\|_{\xi, |x-x_{Q_j}|\sim 2^kr_j}\|u_j^*\|_{\tilde\xi, 2^{k+k_0+2}Q_j}\\
&\leq C_{m,\xi} M_{\tilde\xi}\,u_j^*(y).
\end{align*}

These two estimates imply that
\[I_3^{m,1}\lesssim \frac{1}{t}\sum_j\int_{Q_j} |h_j(y)|v(y)M_{\tilde\xi} u_j^*(y)\,dy.\]
The desired bound for $I_3^{m,1}$ can now be achieved by following the same steps as in page~\pageref{pag: estimacion de I_3^1}. 

In order to estimate $I_3^{m,2}$, we observe that
\[I_3^{m,2}=u^*v\left(\left\{x\in\mathbb{R}^n: \left|\frac{\sum_jT((b-b_{Q_j})^mh_jv)(x)}{v(x)}\right|>\frac{t}{6}\right\}\right)\]
and apply Theorem~\ref{teo: F-S para T Hormander} to obtain that 
\begin{align*}
I_3^{m,2}&\leq\frac{C}{t}\int_{\mathbb{R}^n}\left|\sum_j{(b(x)-b_{Q_j})^mh_j(x)}\right|M_{\varphi_p, v^{1-q'}}u^*(x)M(\Psi(v))(x)\,dx\\
&\lesssim\frac{1}{t}\sum_j{\int_{Q_j}|b(x)-b_{Q_j}|^m|h_j(x)|M_{\varphi_p, v^{1-q'}}u_j^*(x)M(\Psi(v))(x)\,dx}\\
&\leq\frac{1}{t}\sum_j{\int_{Q_j}|b(x)-b_{Q_j}|^mf(x)M_{\varphi_p, v^{1-q'}}u_j^*(x)M(\Psi(v))(x)\,dx}\\
&\qquad+\frac{1}{t}\sum_j{\int_{Q_j}|b(x)-b_{Q_j}|^mf_{Q_j}^v M_{\varphi_p, v^{1-q'}}u_j^*(x)M(\Psi(v))(x)\,dx}.
\end{align*}
By virtue of Lemma~\ref{lema: comparacion peso por caracteristica} and Lemma~\ref{lema: M(Psi(v)) esencialmente constante sobre cubos}, we can continue as we have previously done in page~\pageref{pag: estimacion previa a (A)}, so the desired estimate follows.

We now proceed with $I_3^{m,3}$ in order to conclude. By the inductive hypothesis applied to the lower order commutators we have 
\begin{align*}
I_3^{m,3}&\leq \sum_{i=1}^{m-1}u^*v\left(\left\{x\in\mathbb{R}^n: \left|\frac{T_b^i(\sum_j(b-b_{Q_j})^{m-i}h_jv)(x)}{v(x)}\right|>\frac{t}{6C}\right\}\right)\\
&\lesssim \sum_{i=1}^{m-1} \int_{\mathbb{R}^n}\Phi_i\left(\frac{\sum_j |(b(x)-b_{Q_j})^{m-i}h_j(x)|}{t}\right)M_{\varphi_p, v^{1-q'}}u^*(x)M(\Psi(v))(x)\,dx\\
&\leq  \sum_{i=1}^m\sum_j \int_{Q_j}\Phi_i\left(\frac{|(b(x)-b_{Q_j})^{m-i}h_j(x)|}{t}\right)M_{\varphi_p, v^{1-q'}}u_j^*(x)M(\Psi(v))(x)\,dx\\
&\lesssim \sum_{i=1}^m\sum_j \left(\inf_{Q_j}M_{\varphi_p, v^{1-q'}}u_j^*\right)\left(\inf_{Q_j}M(\Psi(v))\right)\int_{Q_j}\Phi_i\left(\frac{|(b(x)-b_{Q_j})^{m-i}h_j(x)|}{t}\right)\,dx,
\end{align*}
according to Lemma~\ref{lema: comparacion peso por caracteristica} and Lemma~\ref{lema: M(Psi(v)) esencialmente constante sobre cubos}. Now we continue as we did in page~\pageref{pag: estimacion de I_3^{m,3}} in order to get the desired bound. This completes the proof.
\end{proof}

\section*{Declarations}

\subsection*{Ethical approval}
Not applicable.
\subsection*{Competing interests}
Not applicable.
\subsection*{Author's contributions}
Not applicable.
\subsection*{Funding}
The authors were supported by PICT 2019 Nº 389 (ANPCyT),  CAI+D 2020 50320220100210 (UNL) and PICT 2018 Nº 02501.
\subsection*{Availability of data and materials}
Not applicable.

\def\cprime{$'$}
\providecommand{\bysame}{\leavevmode\hbox to3em{\hrulefill}\thinspace}
\providecommand{\MR}{\relax\ifhmode\unskip\space\fi MR }
\providecommand{\MRhref}[2]{%
  \href{http://www.ams.org/mathscinet-getitem?mr=#1}{#2}
}
\providecommand{\href}[2]{#2}

\end{document}